\newtheorem{theorem}{Theorem}[section]
\newtheorem{corollary}[theorem]{Corollary}
\newtheorem{lemma}[theorem]{Lemma}
\newtheorem{question}[theorem]{Question}
\newtheorem{problem}[theorem]{Problem}
\newtheorem{prop}[theorem]{Proposition}
\newtheorem{definition}[theorem]{Definition}
\newtheorem{conj}[theorem]{Conjecture}
\theoremstyle{definition}
\newtheorem{claim}[theorem]{Claim}
\theoremstyle{definition}
\theoremstyle{definition}
\theoremstyle{definition}
\theoremstyle{definition}
\theoremstyle{definition}
\theoremstyle{definition}
\newcommand\ex{\ensuremath{\mathrm{ex}}}
\newcommand{\ep}{\varepsilon}
\newcommand{\al}{\alpha}
\newcommand{\de}{\delta}
\newcommand{\ga}{\gamma}
\newcommand{\cY}{\mathcal{Y}}
\newcommand{\cU}{\mathcal{U}}
\newcommand{\cH}{\mathcal{H}}
\newcommand{\cP}{\mathcal{P}}
\newcommand{\Ho}{\H{o}}
\newcommand{\NIM}{\textrm{NIM}}
\newcommand{\nim}{\mathrm{nim}}
\newcommand{\nib}[1]{\noindent{\textbf{#1}}}
\newif\ifnotesw\noteswtrue
\newcommand{\comment}[1]{\ifnotesw $\blacktriangleright$\ {\sf #1}\ 
  $\blacktriangleleft$ \fi}
\newcommand{\hide}[1]{}
\newcommand{\I}[1]{{\mathbbm #1}}
\newcommand{\dedit}{\delta_{\mathrm{edit}}}
\newcommand{\K}[1]{K[#1]} 
\title{Edges not in any monochromatic copy of a fixed graph}
\author{Hong Liu \and Oleg Pikhurko \and Maryam Sharifzadeh
\thanks{Mathematics Institute and DIMAP, University of Warwick, Coventry, CV4 7AL, UK.  Email addresses: {\tt \{h.liu.9, o.pikhurko, m.sharifzadeh\}@warwick.ac.uk}. H.L.\ was supported by the Leverhulme Trust Early Career Fellowship~ECF-2016-523. O.P.\ was supported by ERC grant~306493 and EPSRC grant EP/K012045/1. M.Sh.\ was supported by ERC grant~306493 and Marie Curie Individual Fellowship 752426.}
}
\begin{document}
	\maketitle
	
\begin{abstract}
For a sequence $(H_i)_{i=1}^k$ of graphs, let $\nim(n;H_1,\ldots, H_k)$ denote the maximum number of edges not contained in any monochromatic copy of $H_i$ in colour $i$, for any colour $i$, over all $k$-edge-colourings of~$K_n$. 

When each $H_i$ is connected and non-bipartite, we introduce a variant of Ramsey number that determines the limit of $\nim(n;H_1,\ldots, H_k)/{n\choose 2}$ as $n\to\infty$ and prove the corresponding stability result. Furthermore, if each $H_i$ is what we call \emph{homomorphism-critical} (in particular if each $H_i$ is a clique), then we determine $\nim(n;H_1,\ldots, H_k)$ exactly for all  sufficiently large~$n$. The special case $\nim(n;K_3,K_3,K_3)$ of our result answers a question of Ma.

For bipartite graphs, we mainly concentrate on the two-colour symmetric case (i.e., when $k=2$ and $H_1=H_2$). It is trivial to see that $\nim(n;H,H)$ is at least $\ex(n,H)$, the maximum size of an $H$-free graph on $n$ vertices.
Keevash and Sudakov showed that equality holds if $H$ is the $4$-cycle
and $n$ is large; recently Ma extended their result to an infinite family of bipartite graphs. We provide a larger family of bipartite graphs for which $\nim(n;H,H)=\ex(n,H)$. For a general bipartite graph $H$, we show that $\nim(n;H,H)$ is always within a constant additive error from $\ex(n,H)$, i.e.,~$\nim(n;H,H)= \ex(n,H)+O_H(1)$.
\end{abstract}

\section{Introduction}

Many problems of extremal graph theory ask for (best possible) conditions that guarantee the existence of a given `forbidden' graph. Two prominent examples of this kind are the Tur\'an function and Ramsey numbers. 
Recall that, for a graph $H$ and an integer $n$, the \emph{Tur\'an function} $\ex(n,H)$ is the maximum size of an $n$-vertex $H$-free graph.
Let $K_t$ denote the complete graph on $t$ vertices. The famous theorem of
Tur\'an~\cite{Turan41} states that the unique maximum $K_{r+1}$-free graph of order $n$ is the \emph{Tur\'an graph} $T(n,r)$,
the 
complete balanced $r$-partite graph. Thus $\ex(n,K_{k+1})=t(n,r)$, where
we denote $t(n,r):=e(T(n,r))$.
For a sequence $a_1,\dots,a_k$ of integers, the \emph{Ramsey number} $R(a_1,\dots,a_k)$ is the minimum $R$ such that for every edge-colouring of $K_R$ with colours from $[k]:=\{1,\dots,k\}$, there is a colour-$i$ copy of $K_{a_i}$ for some $i\in [k]$. The fact that $R$ exists (i.e.,\ is finite) was first established by Ramsey~\cite{Ramsey30} and then independently rediscovered by Erd\H os and Szekeres~\cite{ErdosSzekeres35}.
Both of these problems motivated a tremendous amount of research, see e.g.\ the recent surveys by 
Conlon, Fox and Sudakov~\cite{ConlonFoxSudakov15},
F\"uredi and Simonovits~\cite{FurediSimonovits13},  
Keevash~\cite{Keevash11}, 
Radziszowski~\cite{Radziszowski:ds} and
Sudakov~\cite{Sudakov10imc}.

A far-reaching generalisation is to ask for the number of guaranteed forbidden subgraphs. For the Tur\'an function this gives the famous \emph{Erd\H os-Rademacher problem} that goes back to Rademacher (1941; unpublished): what is the minimum number of copies of $H$ in a graph of given order $n$ and size $m>\ex(n,H)$? This problem was revived by Erd\Ho s~\cite{Erdos55,Erdos62} in the 1950--60s. Since then it continues to be a very active area of research, for some recent results see e.g.~\cite{ConlonFoxSudakov10,ConlonKimLeeLee15arxiv,ConlonLee17,Hatami10,KimLeeLee16,LiSzegedy11arxiv,Mubayi10am,Nikiforov11,PikhurkoYilma17,Razborov08,Reiher16,Szegedy14arxiv:v3}.
The analogous question for Ramsey numbers, known as the \emph{Ramsey multiplicity problem}, was introduced by Erd\H os~\cite{Erdos62a} in 1962 and is wide open, see e.g.~\cite{Conlon12,CKPSTY,FranekRodl92,Giraud79,JaggerStovicekThomason96,Sperfeld11arxiv,Thomason89,Thomason97c}.

A less studied but still quite natural question is to maximise the number of edges that do not belong to any forbidden subgraph. Such problems in the Tur\'an context (where we are given the order $n$ and the size $m>\ex(n,H)$ of a graph $G$) were studied in~\cite{ErdosFaudreeRousseau92dm,FurediMaleki14arxiv,GruslysLetzter16arxiv,GrzesikHuVolec16arxiv}.
In the Ramsey context, a problem of this type seems to  have been first posed by Erd\H os, Rousseau, and Schelp (see~\cite[Page~84]{Erdos97}). Namely, they considered  the maximum number of edges not contained in any monochromatic triangle in a $2$-edge-colouring of $K_n$. Also, Erd\H os~\cite[Page~84]{Erdos97} wrote that \emph{``many further related questions can be asked''}. Such questions  will be the focus of this paper.

Let us provide a rather general definition. Suppose that we have fixed a sequence of graphs $H_1,\ldots, H_k$. For a $k$-edge-colouring $\phi$ of $K_n$, let $\NIM(\phi)$ consist of all \NIM-\emph{edges}, that is, those edges of $K_n$ that are not contained in any colour-$i$ copy of $H_i$ for any $i\in [k]$. In other words, $\NIM(\phi)$ is the complement (with respect to $E(K_n)$) of the union over $i\in [k]$ of the edge-sets of $H_i$-subgraphs of colour-$i$.
Define 
 $$\nim(n;H_1,\ldots,H_k):=\max_{\phi:E(K_n)\to [k]} |\NIM(\phi)|,
 $$ 
 to be the maximum possible number of $\NIM$-edges in a $k$-edge-colouring of $K_n$. If all $H_i$'s are the same graph $H$, we will write $\nim_k(n;H)$ instead. Note that for $k=2$ by taking one colour-class to be a maximum $H$-free graph, we have $\nim_2(n;H)\ge \ex(n,H)$. In (\cite[Page~84]{Erdos97}), Erd\H os mentioned that together with Rousseau and Schelp, they  showed that in fact 
 \begin{equation}\label{eq:=}
 \nim_2(n;H)=\ex(n,H),\quad\mbox{for all $n\ge n_0(H)$,}
 \end{equation}
 when $H=K_3$ is the triangle. As observed by Alon (see~\cite[Page 42]{KeevashSudakov04jctb}), this also follows from an earlier paper of Pyber~\cite{Pyber86}. Keevash and Sudakov~\cite{KeevashSudakov04jctb} showed that~\eqref{eq:=} holds when $H$ is an arbitrary clique $K_t$ (or, more generally, when $H$ is \emph{edge-colour-critical}, that is, the removal of some edge $e\in E(H)$ decreases the chromatic number) as well as when $H$ is the 4-cycle $C_4$ (and $n\ge 7$). They~\cite[Problem~5.1]{KeevashSudakov04jctb} also posed the following problem.
 
\begin{problem}[Keevash and Sudakov~\cite{KeevashSudakov04jctb}]\label{prob-KS}
 Does~\eqref{eq:=} hold for every graph $H$?
 \end{problem}
 
In a recent paper, Ma~\cite{Ma17jctb} answered Problem~\ref{prob-KS} in the affirmative for the infinite family of reducible bipartite graphs, where a bipartite graph $H$ is called {\it reducible} if it contains a vertex $v\in V(H)$ such that $H-v$ is connected and $\ex(n,H-v)=o(\ex(n,H))$ as $n\to\infty$.
Ma~\cite{Ma17jctb} also studied the case of $k\ge 3$ colours and raised the following question.
 \begin{question}[Ma \cite{Ma17jctb}]\label{q-jiema}
 	Is it true that $\nim_3(n;K_3)=t(n,5)$?
 \end{question}
The lower bound in Question~\ref{q-jiema} follows by taking a blow-up of a 2-edge-colouring of $K_5$
 without a monochromatic triangle, and assigning the third colour to all pairs inside a part.
 
 \subsection{Non-bipartite case}

In order to state some of our results, we have to introduce the following variant of Ramsey number. Given a set $X$, denote by ${X\choose i}$ (resp.\ ${X\choose\le i}$), the set of all subsets of $X$ of size $i$ (resp.\ at most~$i$). Given two graphs $H$ and $G$, a (not necessarily injective) map $\phi: V(H)\rightarrow V(G)$ is a \emph{homomorphism} if it preserves all adjacencies, i.e.~$\phi(u)\phi(v)\in E(G)$ for every $uv\in E(H)$, and we say that $G$ is a \emph{homomorphic copy} of~$H$.

\begin{definition}\label{def-r*}
  	Given a sequence of graphs $\cH=(H_1,\ldots, H_k)$, denote by $r^*(H_1,\ldots,H_k)$ the maximum integer $r^*$ such that there exists a colouring $\xi: {[r^*]\choose \le 2}\to [k]$ such that
 	\begin{enumerate}[(P1)]
 		\item\label{item-r*1} the restriction of $\xi$ to ${[r^*]\choose 2}$ is \emph{$(H_1,\dots,H_k)$-homomorphic-free} (that is, for each $i\in[k]$ there is no edge-monochromatic  homomorphic copy of $H_i$ in the $i$-th colour);
 		\item\label{item-r*2} for every distinct $i,j\in [r^*]$ we have
 		$\xi(\{i,j\})\not=\xi(\{i\})$, that is, we forbid a pair having the same colour as one of its points.
 	\end{enumerate}
 	For any $r'\le r^*$, we will call a colouring $\xi:{[r']\choose \le 2}\rightarrow [k]$ \emph{feasible} (with respect to $(H_1,\ldots,H_k)$) if it satisfies both~(P\ref{item-r*1}) and~(P\ref{item-r*2}). We say that $(H_1,\ldots,H_k)$ is \emph{nice} if every feasible colouring $\xi:{[r^*(H_1,\ldots,H_k)]\choose \le 2}\rightarrow [k]$ assigns the same colour to all singletons.
\end{definition} 
 
Note that the colour assigned by $\xi$ to the empty set $\emptyset\in {[r^*]\choose \le 2}$ does not matter. Note also that when $k=2$, due to~(P2), a feasible colouring should use the same colour on all singletons. Thus, $r^*(H_1,H_2)=\max\{\chi(H_1),\chi(H_2)\}-1$. If we ignore (P2), then we obtain the following variant of Ramsey number that was introduced by Burr, Erd\H os and Lov\'asz~\cite{BurrErdosLovasz76}.
Let $r_{\mathrm{hom}}(H_1,\ldots,H_k)$ be the \emph{homomorphic-Ramsey number}, that is the maximum integer $r$ such that there exists an $(H_1,\ldots,H_k)$-homomorphic-free colouring $\xi: {[r]\choose 2}\to [k]$. We remark that for the homomorphic-Ramsey number, the colours of vertices do not play a role. When all $H_i$'s are cliques, this Ramsey variant reduces to the classical graph Ramsey problem:
  \begin{equation}\label{eq:r}
     r_{\mathrm{hom}}(K_{a_1},\ldots,K_{a_k})= R(a_1,\dots,a_k)-1.
  \end{equation}
  
There are some further relations to~$r^*$. For example, by assigning the same colour $i$ to all singletons and using the remaining $k-1$ colours on pairs, one can see that
 \begin{equation}\label{eq:LowerRHom}
 r^*(H_1,\dots,H_k)\ge \max_{i\in [k]}\,r_{\mathrm{hom}}(H_1,\ldots,H_{i-1},H_{i+1},\ldots,H_k).
 \end{equation}
 If some $H_i$ is bipartite, then the problem of $r^*$ reduces to $r_{\mathrm{hom}}$. Indeed, as $K_2$ is a homomorphic copy of any bipartite graph, when some $H_i$ is bipartite, no feasible colouring $\xi$ can use colour $i$ on any pair. Consequently, we have equality in~\eqref{eq:LowerRHom}. This is one of the reasons why we restrict to non-bipartite $H_i$ in this section. 
 
 It would be interesting to know if~\eqref{eq:LowerRHom} can be strict. We conjecture that if all $H_i$'s are cliques then there is equality in~\eqref{eq:LowerRHom} and, furthermore, every optimal colouring uses the same colour on all singletons:
 
 \begin{conj}\label{conj-r*=r}
 	For any integers $3\le a_1\le \ldots\le a_k$, $(K_{a_1},\ldots,K_{a_k})$ is nice. In particular, $r^*(K_{a _1},\ldots,K_{a_k})=R(K_{a_2},\ldots,K_{a_k})-1$.
 \end{conj}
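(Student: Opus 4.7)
The lower bound $r^*(K_{a_1},\dots,K_{a_k})\ge R(a_2,\dots,a_k)-1$ is the $i=1$ instance of~\eqref{eq:LowerRHom} combined with~\eqref{eq:r}; monotonicity of Ramsey numbers in each argument makes $i=1$ the maximizing choice. Niceness and the matching upper bound are linked: if niceness is known, any feasible $\xi$ on $n$ vertices aligns to some singleton colour $c^*$, the restriction to pairs is a $(k-1)$-coloring of $K_n$ with no monochromatic $K_{a_i}$ in colour $i$ for $i\ne c^*$, and hence $n\le R(a_{[k]\setminus\{c^*\}})-1\le R(a_2,\dots,a_k)-1$. So the core goal is to prove niceness, i.e.\ every feasible $\xi$ on $n=R(a_2,\dots,a_k)-1$ vertices has all singletons the same colour. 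My plan is induction on~$k$.

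The base case $k=2$ is immediate from~(P\ref{item-r*2}): with only two colours, a pair straddling two distinct singleton colours would require a third colour. For the inductive step, fix a feasible $\xi$ on $n=R(a_2,\dots,a_k)-1$ vertices and set $V_c=\{v:\xi(\{v\})=c\}$ and $G_c=$ the colour-$c$ pair graph, so $G_c$ is $K_{a_c}$-free and $V_c$ is an independent set of~$G_c$. To argue by contrapositive, I assume $\xi$ is \emph{non-aligned} (at least two $V_c$'s are non-empty) and aim to produce a feasible coloring on $n+1$ vertices, contradicting $r^*\le R(a_2,\dots,a_k)-1$. The construction has two steps: (a) \emph{merging}---pick the two smallest indices $c<d$ with $V_c,V_d\ne\emptyset$, re-label every singleton of $V_d$ to colour~$c$, and redistribute the resulting offending edges $E^*:=G_c\cap\binom{V_d}{2}$ among the remaining colours; iterating this at most $k-1$ times yields an aligned feasible coloring on the same vertex set~$[n]$; (b) \emph{extension}---use the slack left over from having collapsed several singleton classes (the final aligned coloring uses only $k-1$ colours on pairs, leaving one colour entirely unused on pairs) to append a new vertex in a manner compatible with both~(P\ref{item-r*2}) and the $K_{a_i}$-freeness constraints.

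\textbf{Main obstacle.} The technically hardest point is step~(a): showing that the offending edges $E^*$ can always be redistributed among $[k]\setminus\{c\}$ without creating a monochromatic $K_{a_i}$. Dumping $E^*$ wholesale into $G_d|_{V_d}$ keeps the induced subgraph $K_{a_d}$-free (because $E^*\subseteq G_c$ is $K_{a_c}$-free and $a_d\ge a_c$), but a new global $K_{a_d}$ can still arise by combining new internal $V_d$-edges with old $G_d$-edges reaching outside~$V_d$. I would handle this by a Hall/flow-type distribution of $E^*$ across all absorber colours $i\in[k]\setminus\{c\}$, exploiting the slack coming from~(P\ref{item-r*2}) (namely, $V_i$ is $G_i$-independent, so every $K_{a_i}$ in the post-redistribution $G_i$ must lie in $V\setminus V_i$ and therefore overlap the new edges in at most $a_i-1$ vertices). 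Step~(b) is comparatively milder but still demands choosing the singleton colour of the appended vertex to exploit the freed-up colour revealed by the merging. An alternative route, if the redistribution proves intractable, is to prove the upper bound $n\le R(a_2,\dots,a_k)-1$ via a direct peeling induction---iteratively delete a smallest colour class $V_c$, absorb its incident edges into other colours, and apply the inductive hypothesis to the residual $(k-1)$-coloring---and then derive niceness from the equality case of the resulting inequality. Either way, the simultaneous control of the Tur\'an constraints on all~$G_i$ while respecting~(P\ref{item-r*2}) across multiply-coloured singletons is the crux, which is presumably why the statement is posed as a conjecture rather than a theorem.
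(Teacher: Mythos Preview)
This statement is a conjecture that the paper leaves open in general; only $k=3$ (Theorem~\ref{thm-conjfor3}) and $(K_3,K_3,K_3,K_3)$ (Theorem~\ref{thm-4triangles}) are proved. So you are attempting an open problem, as you yourself concede in the last sentence.

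Your argument has a structural circularity. You correctly observe that niceness implies the upper bound $r^*\le R(a_2,\dots,a_k)-1$. But your proof of niceness assumes a non-aligned feasible colouring on $n:=R(a_2,\dots,a_k)-1$ vertices and aims to build a feasible colouring on $n+1$ vertices, ``contradicting $r^*\le R(a_2,\dots,a_k)-1$''. That inequality is exactly what has not been established independently; you derived it \emph{from} niceness. Producing a feasible colouring on $n+1$ vertices only shows $r^*\ge n+1$, which contradicts nothing. To reach a genuine contradiction you would need to iterate merge--extend forever and force $r^*=\infty$, but nothing guarantees the extended colouring is again non-aligned (and if step~(b) yields an aligned colouring, step~(a) no longer applies). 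The ``alternative route'' via peeling has the same problem: you still need an independent upper bound to close the loop, and your inductive hypothesis on $k-1$ colours is never invoked concretely.

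Even granting the logical frame, both steps are real gaps. In~(a), recolouring $E^*=G_c\cap\binom{V_d}{2}$ into other colours can create a global $K_{a_i}$ through vertices outside $V_d$; the Hall/flow heuristic you sketch (absorbers have slack because $V_i$ is $G_i$-independent) does not obviously produce a valid assignment. In~(b), extending an aligned $(k-1)$-edge-colouring of $K_n$ by one vertex amounts to $(k-1)$-colouring a star so that each colour-$i$ neighbourhood is $K_{a_i-1}$-free; when $n=R(a_{[k]\setminus\{c\}})-1$ the base colouring may be Ramsey-critical and admit no such extension at all.

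For contrast, the paper's proof for $k=3$ uses no recolouring or vertex addition. It partitions $[r]$ by singleton colour into $V_1\cup V_2\cup V_3$, bounds each $|V_i|$ by a two-colour Ramsey number via~(P\ref{item-r*2}), and then chains the elementary inequalities $r(a,b)+r(a,c)\le r(a,b+c-1)$ and $r(a,b)<r(a+1,b)$ to force $|V_1|+|V_2|+|V_3|<r$ whenever at least two parts are non-empty. This direct counting is tight enough for three colours but does not obviously generalise---which is why the full statement remains conjectural.
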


It is worth noting that not all $k$-tuples are nice. For example, it is easy to show that $r^*(C_5,C_5,C_5)=r_{\mathrm{hom}}(C_5,C_5)=4$, where $C_i$ denotes the cycle of length $i$, while Figure~\ref{fig-C_5} shows a feasible colouring of ${[4]\choose \le 2}$ assigning two different colours to singletons. 

\begin{figure}[!htb]
	\centering
	\includegraphics[scale=1.8]{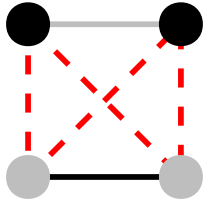}
	\caption{A feasible colouring of $K_4$ with respect to $(C_5,C_5,C_5)$, with two different colours on vertices.}
	\label{fig-C_5}
\end{figure}

 Our first result shows that this new variant plays a similar role for the function $\nim(\cdot)$ as the chromatic number in the Erd\H os-Simonovits-Stone Theorem~\cite{ErdosStone46,ErdosSimonovits66}.
 \begin{theorem}\label{thm-ram-nim}
 	Let $H_i$ be a non-bipartite graph, $i\in[k]$, and let
 	$r^*:=r^*(H_1,\ldots,H_k)$.
For every $\ep>0$,  we have that, for all large $n$,
 	\begin{equation}\label{eq:ram-nim}
 	\nim(n; H_1,\ldots,H_k)\le \left(1-\frac{1}{r^*}\right)\frac{n^2}{2}+ \ep n^2.
 	\end{equation}
 	Furthermore, if each $H_i$ is connected or there exists a feasible colouring of ${[r^*]\choose \le 2}$ with $k$ colours such that all singletons have the same colour, then we have $\nim(n; H_1,\ldots,H_k)\ge t(n,r^*)$.
 \end{theorem}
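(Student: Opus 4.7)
My plan is to prove the lower bound by an explicit blow-up construction and the upper bound via the multi-colour Szemer\'edi Regularity Lemma coupled with a counting/embedding argument.

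For the lower bound, I take a feasible colouring $\xi:\binom{[r^*]}{\le 2}\to [k]$ of the appropriate type and blow it up: partition $V(K_n)$ into balanced parts $V_1\cup\cdots\cup V_{r^*}$, colour every cross-edge in $V_a\times V_b$ by $\xi(\{a,b\})$ and every within-edge of $V_a$ by $\xi(\{a\})$. This yields exactly $t(n,r^*)$ cross-edges, and I claim each of them is NIM. If $uv\in V_a\times V_b$ of colour $c=\xi(\{a,b\})$ lay in a colour-$c$ copy $F$ of $H_c$, the projection $\pi:V(F)\to [r^*]$ would pull back the colouring: any edge $xy\in E(F)$ with $\pi(x)=\pi(y)=s$ forces $\xi(\{s\})=c$, and any $xy$ with $\pi(x)\neq \pi(y)$ forces $\xi(\{\pi(x),\pi(y)\})=c$. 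Property (P2) forbids any vertex $s$ from satisfying both $\xi(\{s\})=c$ and $\xi(\{s,t\})=c$, so each connected component of $F$ either lies inside a single $V_s$ with $\xi(\{s\})=c$ or contains no within-blob edge and so induces a homomorphism into the loop-free graph of colour-$c$ pairs on $[r^*]$. Under the connected hypothesis, $F$ is itself a single component, which cannot lie in one $V_s$ (because $uv$ is cross) and cannot give a homomorphism $H_c\to$ colour-$c$ pairs (by (P1)). Under the alternative hypothesis that a feasible $\xi$ assigns a common colour $c_0$ to all singletons, (P2) forces $c\neq c_0$, so $F$ has no within-blob edges and every component of $F$ provides a homomorphism from itself into the colour-$c$ pair graph, again contradicting (P1).

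For the upper bound, I assume $|\NIM(\phi)|>(1-1/r^*)\frac{n^2}{2}+\epsilon n^2$ for large $n$ and apply the multi-colour Szemer\'edi Regularity Lemma with parameter $\epsilon_1\ll \epsilon$, producing a partition $V(K_n)=V_1\cup\cdots\cup V_m$. After discarding edges inside blobs, in irregular pairs, or in colour classes of density below some $\eta$ within a regular pair, the surviving NIM edges still number more than $(1-1/r^*)\frac{n^2}{2}+\frac{\epsilon}{2}n^2$. I then form a reduced graph $R$ on $[m]$ by including $\{i,j\}$ iff $(V_i,V_j)$ is regular and at least a $(1-\gamma)$-fraction of its edges are NIM, for a small $\gamma<1/(k+1)$; by construction $|E(R)|\ge (1-1/r^*)\binom{m}{2}+\Omega(m^2)$, so by Tur\'an's theorem $R$ contains a $K_{r^*+1}$ on blobs $V_{i_1},\ldots,V_{i_{r^*+1}}$. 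Letting $c_{ab}$ denote the dominant NIM colour in $(V_{i_a},V_{i_b})$ and $c_a$ denote the dominant colour in the within-$V_{i_a}$ structure, I set $\xi(\{a,b\}):=c_{ab}$, $\xi(\{a\}):=c_a$, and aim to show that this $\xi$ is feasible on $\binom{[r^*+1]}{\le 2}$, which contradicts the maximality of $r^*$.

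Verifying (P1) is the easier direction: a colour-$c$ homomorphic copy of $H_c$ in $\xi|_{\mathrm{pairs}}$ would, via the multi-colour counting lemma, yield $\Omega(n^{|V(H_c)|})$ colour-$c$ copies of $H_c$ in $K_n$ aligned with this blow-up template, and averaging forces more than $\gamma|V_{i_a}||V_{i_b}|$ non-NIM edges in some relevant cross-pair, contradicting NIM-density $\ge 1-\gamma$ there. The main obstacle is (P2): if $c_a=c_{ab}=c$, colour $c$ has density $\ge 1/k$ within $V_{i_a}$ and density $\ge (1-\gamma)/k$ across $(V_{i_a},V_{i_b})$, and I would leverage the non-bipartite hypothesis $\chi(H_c)\ge 3$ to find enough colour-$c$ structure to embed $H_c$ with at least one edge in $(V_{i_a},V_{i_b})$. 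Concretely, I sub-partition $V_{i_a}$ by a further application of regularity, locate sub-blobs of $V_{i_a}$ whose mutual colour-$c$ density and whose colour-$c$ density to $V_{i_b}$ are both bounded below, and invoke the counting lemma to embed $H_c$ in colour $c$ using such a colour-$c$ $\chi(H_c)$-partite blow-up straddling the two blobs; for $\chi(H_c)>3$ one may recruit additional blobs from the $K_{r^*+1}$ or refine $V_{i_b}$ analogously. The delicate point is calibrating $\epsilon_1,\eta,\gamma$ and the depth of refinement so that the embedding succeeds even when colour $c$ has density as low as $1/k$ within a single blob.
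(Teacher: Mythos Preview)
Your lower bound argument is essentially the same as the paper's and is correct.

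For the upper bound, your overall architecture (regularity, Tur\'an to find $K_{r^*+1}$ in a reduced graph, then derive a feasible colouring on $r^*+1$ points for a contradiction) matches the paper's Lemma~\ref{lem-no-large-clique}. However, there is a genuine gap in how you assign the vertex colours $c_a$, and this is exactly the step where the paper does something you have not.

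You set $c_a$ to be the \emph{majority} colour inside $V_{i_a}$, so colour $c_a$ has density at least $1/k$ there. But density $1/k$ inside a blob gives you no control over the clique structure of that colour: after sub-regularising $V_{i_a}$, the colour-$c$ reduced graph has edge density about $1/k$, which is in general far too sparse to contain a $K_{\chi(H_c)-1}$. For a concrete obstruction, take $k=2$ and $H_1=H_2=K_4$; the colour-$1$ graph inside $V_{i_a}$ could be (close to) complete bipartite, with density $1/2$ yet triangle-free, so your ``locate sub-blobs of $V_{i_a}$ whose mutual colour-$c$ density is bounded below'' step simply fails. Recruiting other blobs of the $K_{r^*+1}$ does not help, since you have no control over which colour appears on those other pairs.

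The paper's fix is to define the vertex colour via Ramsey rather than via majority. One regularises inside each $V_{i_a}$; the inner reduced graph $R_a$ is nearly complete, hence by Tur\'an contains a clique of order $R(\chi(H_1)-1,\ldots,\chi(H_k)-1)$, and by the definition of this Ramsey number that clique contains a monochromatic $K_{\chi(H_p)-1}$ in some colour $p$. One then sets $\xi(\{a\}):=p$. Now if (P\ref{item-r*2}) fails with $\xi(\{a,b\})=\xi(\{a\})=p$, the $(\chi(H_p)-1)$-clique of colour $p$ already sitting inside $V_{i_a}$, together with $V_{i_b}$ (joined in colour $p$ through the NIM edges), gives a colour-$p$ $\chi(H_p)$-partite template into which the Embedding Lemma places $H_p$ using a NIM edge between $V_{i_a}$ and $V_{i_b}$. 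This Ramsey step is the missing idea in your proposal.

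A smaller point: your reduced graph, where you keep $\{i,j\}$ only when the NIM density in $(V_i,V_j)$ is at least $1-\gamma$, does not obviously inherit the claimed edge count, since many NIM edges may sit in pairs of NIM density around $1/2$. The paper instead applies regularity directly to $G^{\nim}$ and keeps a pair whenever some NIM colour has density at least $\gamma$; then only $O(k\gamma n^2)$ NIM edges are lost and the Tur\'an bound follows cleanly.
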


We also obtain the following stability result stating that if 
the number of NIM-edges is close to the bound in~\eqref{eq:ram-nim}, then the NIM-graph is close to a Tur\'an graph.  Let the \emph{edit distance} between  graphs $G$ and $H$ of the same order be
 \begin{equation}\label{eq:dedit}
 \dedit(G,H):=\min_{\sigma}\, |E(G)\bigtriangleup  \sigma(E(H))|,
 \end{equation}
 where the minimum is taken over all bijections $\sigma:V(H)\to V(G)$. In other words, $\dedit(G,H)$ is the minimum number of adjacency edits  needed to make $G$
and $H$ isomorphic.

 \begin{theorem}\label{thm-weakstab}
 	For any non-bipartite graphs $H_i$, $i\in[k]$, and any constant $\ep>0$, there exists $\de>0$ such that the following holds for sufficiently large $n$. If the number of \emph{\NIM}-edges of some  $\phi: {[n]\choose 2}\rightarrow[k]$ satisfies 
 	 $$
 	 \nim(\phi;H_1,\ldots,H_k)\ge\left(1-\frac{1}{r^*}\right)\frac{n^2}{2}-\de n^2,
 	 $$
 	  then $\dedit(G^{\nim},T(n,r^*))\le \ep n^2$, where  $r^*:=r^*(H_1,\ldots,H_k)$ and $G^{\nim}$ is the \emph{\NIM}-graph of $\phi$, i.e.,~the spanning subgraph with edge set \emph{$\NIM(\phi)$}.
 \end{theorem}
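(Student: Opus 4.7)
The plan is to deduce Theorem~\ref{thm-weakstab} by re-examining the (regularity-based) proof of Theorem~\ref{thm-ram-nim} with more care, and combining it with the Erd\H{o}s--Simonovits stability theorem for $K_{r^*+1}$. The structural claim we aim for is that $G^{\nim}$ can be made $K_{r^*+1}$-free by deleting $o_{\de}(n^2)$ edges; together with the hypothesis $|E(G^{\nim})| \ge (1-1/r^*) n^2 / 2 - \de n^2$, the classical stability theorem will then force $\dedit(G^{\nim}, T(n, r^*)) \le \ep n^2$.

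To implement this, I would apply a multi-colour Szemer\'edi regularity lemma to $\phi$, with parameters $\ep_0, d_0 \ll \ep$, producing an equitable partition $\{V_0, V_1, \ldots, V_m\}$ of $[n]$ in which, for every colour $c \in [k]$, all but an $\ep_0$-fraction of pairs $(V_i, V_j)$ are $\ep_0$-regular in colour~$c$. Define an auxiliary colouring $\xi: {[m] \choose \le 2} \to [k]$ by letting $\xi(\{i\})$ be a colour of maximum density inside $V_i$, and for each $\ep_0$-regular pair $(V_i, V_j)$ of total density $\ge d_0$, letting $\xi(\{i, j\})$ be any colour of density $\ge d_0/k$ in that pair that differs from both $\xi(\{i\})$ and $\xi(\{j\})$, provided such a colour exists. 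Delete from $G^{\nim}$ all edges lying inside $V_0$, inside a single cluster, in an irregular pair, in a sparse pair, or in a pair on which $\xi(\{i,j\})$ is undefined; the total loss is at most $\ep_1 n^2$ for a small $\ep_1 = \ep_1(\ep_0, d_0, 1/m)$.

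The heart of the argument is to show that the reduced graph $R$ on $[m]$ — with $ij \in E(R)$ iff $\xi(\{i,j\})$ is defined — is $K_{r^*+1}$-free. Suppose for contradiction that $R$ contains a $K_{r^*+1}$ on some $S \subseteq [m]$. By construction, $\xi|_S$ satisfies (P\ref{item-r*2}); if it also satisfies (P\ref{item-r*1}), then $\xi|_S$ is a feasible colouring on $|S| = r^* + 1$ points, contradicting the maximality in Definition~\ref{def-r*}. Otherwise (P\ref{item-r*1}) fails, so some colour-$c$ homomorphic copy of $H_c$ appears in $\xi|_S$; the coloured counting lemma applied to the corresponding $\ep_0$-regular colour-$c$ pairs of density at least $d_0/k$ then yields $\Om(n^{|V(H_c)|})$ honest monochromatic copies of $H_c$ in colour $c$, each of whose edges is non-NIM, and a straightforward averaging shows that all but $o(n^2)$ edges of the relevant blow-up are non-NIM — contradicting the assumption that these pairs contribute to $G^{\nim}$. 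Meanwhile, for a pair $(V_i, V_j)$ excluded because $\xi(\{i, j\})$ was undefined, its only dense colour $c$ equals $\xi(\{i\})$ or $\xi(\{j\})$; since $H_c$ is non-bipartite, it admits a homomorphism into the ``$K_2$-with-a-loop'' multigraph realised by this pair, and a blow-up embedding again yields $\Om(n^{|V(H_c)|})$ monochromatic $H_c$-copies in colour $c$, so all but $O(\ep_0 |V_i||V_j|)$ edges of the pair are already non-NIM — justifying that the cleanup discards few genuine NIM-edges.

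Putting these together, after removing $O(\ep_1 n^2)$ edges $G^{\nim}$ embeds into a blow-up of the $K_{r^*+1}$-free reduced graph $R$, yet still retains at least $t(n, r^*) - 2\de n^2$ edges. The Erd\H{o}s--Simonovits stability theorem then yields $\dedit(G^{\nim}, T(n, r^*)) \le \ep n^2$ once $\ep_0, d_0, \de$ are chosen small enough in terms of~$\ep$. The principal technical obstacle is the embedding step in both branches of the dichotomy above: one must convert a colour-preserving homomorphism of $H_c$ into the loop-bearing reduced multigraph (whose loops come from the singleton colours) into $\Om(n^{|V(H_c)|})$ honest monochromatic $H_c$-copies in $K_n$. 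This requires a colour-aware application of the counting/embedding lemma, and it is where the non-bipartiteness of each $H_c$ is essential: a bipartite $H_c$ could be mapped into a pure cross-cluster bipartite pair with no loop involvement, which would leave too much room for legitimate NIM-edges and break the contradiction.
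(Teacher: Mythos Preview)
Your high-level plan (regularity, reduced graph $K_{r^*+1}$-free, Erd\H{o}s--Simonovits stability) matches the paper's, but the implementation has a real gap precisely where you flag the ``principal technical obstacle''. Two issues. First, you regularise $\phi$ and define $R$ and $\xi$ purely via $\phi$-densities, so membership in $R$ says nothing about NIM-edges; when (P1) fails on $\xi|_S$ you obtain many monochromatic $H_c$-copies in $K_n$, but that does not contradict $S$ being a clique in $R$---there is no ``assumption that these pairs contribute to $G^{\nim}$'' to violate. The paper instead regularises $G^{\nim}$ (with its inherited colouring), so a reduced-graph edge records high \emph{NIM}-density in some colour, and a (P1) violation then embeds $H_c$ inside $G^{\nim}$ itself, which is the needed contradiction.

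Second, and more fundamentally, your choice $\xi(\{i\}):=$ (majority colour inside $V_i$) cannot support the loop-embedding. A single regularity pass gives no regularity \emph{inside} clusters, and density $\ge 1/k$ in colour $c$ inside $V_i$ does not yield a colour-$c$ structure rich enough to host $H_c$ minus one part (the colour-$c$ graph there could be bipartite while $\chi(H_c)$ is arbitrarily large). The paper's Lemma~\ref{lem-no-large-clique} resolves this by applying regularity a \emph{second} time inside each $V_i$: the inner reduced graph is nearly complete, so by Tur\'an it contains a clique of order $R(\chi(H_1)-1,\ldots,\chi(H_k)-1)$, which by Ramsey contains a monochromatic $K_{\chi(H_p)-1}$ in some colour $p$; one then sets $\xi(V_i):=p$. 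If (P2) is violated by $\xi(V_iV_j)=\xi(V_i)=p$, the Slicing Lemma makes those inner sub-clusters regular against $V_j$, giving a $\chi(H_p)$-partite regular colour-$p$ structure into which $H_p$ embeds using a NIM-edge of $G^{\nim}[V_i,V_j]$. Your majority-colour assignment cannot furnish such a structure, so the loop step as written does not go through.
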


 Our next theorem shows that if Conjecture~\ref{conj-r*=r} is true, then this would determine the exact value of $\nim(\cdot)$ for a rather large family of graphs, including cliques. We call a graph $H$ \emph{homomorphism-critical} if it satisfies the following. If $F$ is a minimal  homomorphic copy of $H$, i.e.\ no proper subgraph of $F$ is a homomorphic copy of $H$, then for any edge $uv\in E(F)$, there exists a homomorphism $g:V(H)\rightarrow V(F)$ such that $|g^{-1}(u)|=|g^{-1}(v)|=1$, i.e.\ the pre-image sets of $u$ and $v$ are singletons. For example, all complete multipartite graphs with at least two parts of size 1 are homomorphism-critical. A simple consequence of this property is the following. As $F$ is minimal, it does not have any isolated vertices. Therefore, for any vertex $v\in V(F)$, there exists a homomorphism $g:V(H)\rightarrow V(F)$ such that $|g^{-1}(v)|=1$.
 
\begin{theorem}\label{thm-exact}
 	Let $(H_1,\ldots,H_k)$ be a nice sequence of non-bipartite graphs such that each $H_i$ is homomorphism-critical. Then for sufficiently large $n$,
 	\begin{eqnarray*}
 	\nim(n;H_{1},\ldots,H_{k})=t(n,r^*),
 	\end{eqnarray*}
 	where $r^*:=r^*(H_1,\ldots,H_k)$. Additionally, the \emph{\NIM}-graph of every extremal colouring is isomorphic to $T(n,r^*)$.
\end{theorem}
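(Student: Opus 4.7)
The plan is to prove the lower and upper bounds separately and observe they match. For the lower bound, since $(H_1,\ldots,H_k)$ is nice, the defining property yields a feasible colouring of ${[r^*]\choose \le 2}$ in which all singletons receive the same colour, hence the second alternative of Theorem~\ref{thm-ram-nim} gives $\nim(n;H_1,\ldots,H_k) \ge t(n,r^*)$.

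For the upper bound, I would fix an extremal colouring $\phi$ with \NIM-graph $G$, so $|E(G)| \ge t(n,r^*)$. Theorem~\ref{thm-weakstab} produces a partition $\cP=(V_1,\ldots,V_{r^*})$ of $[n]$ with $\dedit(G,T(n,r^*))\le \ep n^2$; I would pick $\cP$ to minimise this edit distance (equivalently, to maximise the number of cross-part \NIM\ edges). Writing $n_i=|V_i|$, letting $a$ be the number of \NIM\ edges inside parts and $b$ the number of non-\NIM\ edges between parts,
$$
t(n,r^*) \;\le\; |E(G)| \;=\; \sum_{i<j} n_i n_j + a - b \;\le\; t(n,r^*)+a-b,
$$
so $a\ge b$ and $\cP$ is nearly balanced. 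A vertex $v\in V_i$ is called \emph{typical} if fewer than $\sqrt{\ep}\,n$ of its cross-part edges are non-\NIM\ and fewer than $\sqrt{\ep}\,n$ of its within-part edges are \NIM; all but $O(\sqrt{\ep}\,n)$ vertices are typical.

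From the typical part I would then distil a colouring $\xi:{[r^*]\choose\le 2}\to[k]$ by letting $\xi(\{i,j\})$ be the majority colour on \NIM\ edges between typical vertices of $V_i$ and $V_j$, and $\xi(\{i\})$ be the majority colour on non-\NIM\ within-part edges among typical vertices of $V_i$. A standard supersaturation argument — any monochromatic homomorphic copy of $H_\ell$ in colour $\ell$ in the pair colouring would lift, via a greedy choice of typical representatives, to a genuine monochromatic $H_\ell$-subgraph in $\phi$, destroying many \NIM\ edges — confirms that $\xi$ satisfies both (P\ref{item-r*1}) and (P\ref{item-r*2}). By niceness, all singleton values of $\xi$ agree; I denote this common colour by $1$.

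It then remains to show $a=0$ and $b=0$. For $a=0$, suppose $uv\in E(G)$ with $u,v\in V_i$. The colour-$1$ subgraph on typical vertices of $V_i$ is nearly complete, which already forces $\phi(uv)=1$ (otherwise flipping $\phi(uv)$ to colour $1$ would strictly increase the \NIM\ count, contradicting extremality of $\phi$). Choose a minimal monochromatic homomorphic copy $F$ of $H_1$ in colour~$1$ that uses $uv$; homomorphism-criticality of $H_1$ then supplies $g:V(H_1)\to V(F)$ with $|g^{-1}(u)|=|g^{-1}(v)|=1$, producing a genuine monochromatic $H_1$-copy through $uv$ and contradicting $uv\in E(G)$. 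For $b=0$, suppose $uv\notin E(G)$ with $u\in V_i$, $v\in V_j$, $i\neq j$, contained in a monochromatic $H_\ell$-copy $F$ of colour $\ell$; taking $F$ minimal and applying criticality yields $g:V(H_\ell)\to V(F)$ with $u,v$ as singleton pre-images. Composing $g$ with the part-assignment $V(F)\to[r^*]$ and rerouting through typical colour-$\ell$ representatives (to avoid the vanishingly few atypical within-part colour-$\ell$ edges) gives a monochromatic homomorphic copy of $H_\ell$ of colour $\ell$ in the pair colouring $\xi$, contradicting~(P\ref{item-r*1}). The hard part will be exactly this pair of conformity arguments: both rely crucially on the movability furnished by homomorphism-criticality, which lets me simultaneously place a monochromatic $H_1$-copy through any prescribed within-part edge and project off-partition monochromatic $H_\ell$-copies down to genuine homomorphisms in $\xi$. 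Finally, $a=b=0$ together with $|E(G)|\ge t(n,r^*)$ forces $\cP$ to be perfectly balanced, and so $G\cong T(n,r^*)$.
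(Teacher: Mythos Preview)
Your overall architecture --- stability, then extract a feasible $\xi$, then rule out within-part \NIM-edges --- matches the paper's, but the execution of the $a=0$ step has a genuine gap. You assert that ``the colour-$1$ subgraph on typical vertices of $V_i$ is nearly complete''; however, your $\xi(\{i\})=1$ was defined only as the \emph{majority} colour among within-part non-\NIM\ edges, which gives density at least $1/k$, not $1-o(1)$. Proving that in fact \emph{all} pairs inside $V_i$ receive the same $\phi$-colour is precisely the crux of the paper's argument (their Claim~5.3), and it is not a ``standard supersaturation'' consequence: the paper needs the Partite Ramsey Lemma to find, for any two within-part pairs $u_1u_2,u_3u_4\in{V_p\choose 2}$, an \emph{exact} (not majority) blow-up $\xi(U_1,\dots,U_{r^*})\subseteq\phi$ into the joint \NIM-neighbourhoods of the $u_j$, then argues that extending $\xi$ by $u_1,u_2$ forces a violation of (P1) or (P2), which homomorphism-criticality converts into a forbidden copy through a \NIM-edge. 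Your majority-colour $\xi$ does not give you such control over the actual colours incident to a specific pair $u,v$.

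Relatedly, the ``flipping'' justification for $\phi(uv)=1$ is invalid: $uv$ is already a \NIM-edge, so recolouring it cannot raise the \NIM\ count (it can only destroy the \NIM\ status of $uv$ itself and of neighbouring edges). Your $b=0$ argument is both unnecessary (you already observed $a\ge b\ge 0$) and flawed as written: projecting a monochromatic $H_\ell$-copy $F\subseteq K_n$ to $[r^*]$ via the part map need not land in the colour-$\ell$ subgraph of $\xi$, since $\xi$ records only majority colours. The paper sidesteps $b=0$ entirely: once every part is $\phi$-monochromatic (hence contains no \NIM-edge), $G^{\nim}$ is $r^*$-partite and the edge count finishes. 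Finally, the paper precedes all of this with a minimum-degree reduction (iteratively deleting low-\NIM-degree vertices) so that Claim~5.2 (bounding within-part \NIM-degree) can be leveraged; your sketch implicitly assumes this degree control without securing it.
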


In the following theorems, we prove Conjecture~\ref{conj-r*=r} for $k=3$, and for $a_1=\ldots=a_4=3$ when $k=4$.  

\begin{theorem}\label{thm-conjfor3}
	For all integers $3\le a_1\le a_2\le a_3$, $(K_{a_1},K_{a_2},K_{a_3})$ is nice. In particular, $$r^*(K_{a_1},K_{a_2},K_{a_3})=R(a_2,a_3)-1.$$ 
\end{theorem}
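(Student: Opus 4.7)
The plan is to prove both statements together. For the lower bound $r^*(K_{a_1}, K_{a_2}, K_{a_3}) \ge R(a_2, a_3) - 1$, I would take any $(K_{a_2}, K_{a_3})$-free $2$-edge-colouring of ${[R(a_2, a_3) - 1] \choose 2}$ using colours $2$ and $3$ and extend it by assigning colour $1$ to every singleton; both~(P\ref{item-r*1}) and~(P\ref{item-r*2}) hold since colour $1$ is absent from all pairs. By~\eqref{eq:LowerRHom} together with $R(a_i, a_j) \le R(a_2, a_3)$ (from $a_1 \le a_2 \le a_3$ and monotonicity of $R$), this is the best of the three uniform-singleton constructions.

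For the matching upper bound and niceness, let $\xi\colon {[N]\choose \le 2} \to [3]$ be feasible. Set $V_i := \{v : \xi(\{v\}) = i\}$ and $n_i := |V_i|$. Condition~(P\ref{item-r*2}) forces $\xi(\{u,v\}) \notin \{\xi(\{u\}), \xi(\{v\})\}$, so whenever $u \in V_i$, $v \in V_j$ with $i \neq j$, the pair $\{u,v\}$ receives the unique remaining colour $k$; in particular the complete bipartite subgraph between any two non-empty parts is monochromatic. If only one $V_i$ is non-empty, pairs use only the two colours of $[3] \setminus \{i\}$ and the classical $2$-colour Ramsey bound~\eqref{eq:r} gives $N \le R(a_j, a_k) - 1 \le R(a_2, a_3) - 1$, maximised by $i = 1$ (and by any $i$ with $R(a_j, a_k) = R(a_2, a_3)$ when the $a_i$'s coincide; in all cases all singletons share a single colour, as required).

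In the mixed-singleton case I would merge two of the three colours into a super-colour $A$ chosen so that all $A$-cliques are confined to a single $V_i$ (or $V_i \cup V_3$) by~(P\ref{item-r*2}), forcing $A$ to avoid $K_{a_2}$ via~(P\ref{item-r*1}) and $a_1 \le a_2$. Concretely, when $V_1, V_2 \neq \emptyset$ and $V_3 = \emptyset$, merging colours $1$ and $2$ gives a $2$-colouring of $K_N$ in which $A$ has no edges between $V_1$ and $V_2$ and avoids $K_{a_2}$, while colour $3$ avoids $K_{a_3}$; classical $2$-colour Ramsey already gives $N \le R(a_2, a_3) - 1$. Writing $s, t$ for the clique numbers of colour~$3$ inside $V_1, V_2$, the bipartite block $K_{n_1, n_2}$ forces $s + t \le a_3 - 1$, and Ramsey inside each $V_i$ gives $n_1 \le R(a_2, s+1) - 1$, $n_2 \le R(a_1, t+1) - 1$. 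For the strict inequality $N < R(a_2, a_3) - 1$ needed for niceness, I would extend the merged colouring by an auxiliary vertex $v^*$, setting $\{v^*, u\}$ to colour $3$ for $u \in V_1$ and to $A$ for $u \in V_2$. Any colour-$3$ clique through $v^*$ lies in $V_1 \cup \{v^*\}$ and has size at most $1 + s \le a_3 - 1$ (using $t \ge 1$); any $A$-clique through $v^*$ lies in $V_2 \cup \{v^*\}$ and has size at most $1 + (a_1 - 1) \le a_2$. When $a_1 < a_2$ this yields a $(K_{a_2}, K_{a_3})$-free $2$-colouring of $K_{N+1}$, contradicting the definition of $R(a_2, a_3)$ unless $N \le R(a_2, a_3) - 2$.

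The case with all three $V_i$ non-empty is handled by the same merging trick, and is strictly tighter because colour~$3$ now also contains the bipartite block $K_{n_1, n_2}$ in addition to $K_{n_1, n_3}$ and $K_{n_2, n_3}$ being forced into colours $2$ and $1$ respectively. The main technical obstacle is the boundary regime $a_1 = a_2$, where the $A$-clique bound obtained via $v^*$ is exactly $a_2$ rather than strictly less; here I would either refine the extension (swapping which of the two non-empty parts receives which colour, chosen according to which of $s, t$ is larger) or invoke the structural fact that no $(K_{a_2}, K_{a_3})$-Ramsey-extremal $2$-colouring on $R(a_2, a_3) - 1$ vertices has a $K_{a_2}$-free colour class that disconnects into two non-empty pieces---a claim that can be checked directly in the small symmetric cases by inspecting known extremal colourings (e.g.\ the $C_5$-$C_5$ colouring for $R(3,3)$) and proved in general by iterating the same one-vertex extension argument.
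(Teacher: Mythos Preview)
Your approach via colour-merging and vertex-extension differs from the paper's, which instead exploits the Ramsey subadditivity inequality $r(a,b)+r(a,c)\le r(a,b+c-1)$ (with $r(\cdot,\cdot):=R(\cdot,\cdot)-1$) together with strict monotonicity $r(a,b)<r(a+1,b)$ to bound $\sum_i |V_i|$ directly via the clique numbers $\omega_j(V_i)$ inside each part, with no auxiliary-vertex step.

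Your proof has a genuine gap at the boundary $a_1=a_2$, which you yourself flag. The $v^*$-extension gives an $A$-clique bound of exactly $a_2$, not $a_2-1$, so you cannot conclude $(K_{a_2},K_{a_3})$-freeness on $N+1$ vertices. The proposed swap does not help: whichever non-empty part receives colour $A$ from $v^*$, the resulting $A$-clique through $v^*$ still has size at most $1+(a_1-1)=1+(a_2-1)=a_2$, since both parts have $A$-clique number at most $a_2-1=a_1-1$ when $a_1=a_2$; choosing according to $s$ versus $t$ only affects the colour-$3$ side, which was already fine. Your fallback ``structural fact'' about disconnected colour classes in extremal Ramsey colourings is neither standard nor proved, and the sketch you offer for it (``iterate the same one-vertex extension'') is circular---that extension is precisely what fails here. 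This is not a corner case: already $a_1=a_2=a_3=3$ lands in this regime. Separately, the three-part case is only asserted: once $V_3\neq\emptyset$, merging colours $1$ and $2$ into $A$ no longer confines $A$-cliques to a single part (all edges inside $V_3$ and all $V_1$--$V_3$, $V_2$--$V_3$ cross-edges become $A$), so an $A$-clique can span $V_3$ together with a colour-$2$ clique in $V_1$, and bounding its size by $a_2-1$ needs a genuinely new argument rather than ``the same trick''.
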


\begin{theorem}\label{thm-4triangles}
	We have that $(K_3,K_3,K_3,K_3)$ is nice. In particular, $$r^*(K_3,K_3,K_3,K_3)=R(3,3,3)-1=16.$$ 
\end{theorem}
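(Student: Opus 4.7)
The plan is to combine a short Ramsey-theoretic construction for the lower bound with a uniform vertex-degree bound for the upper bound, and to extract the niceness from the equality case at $n=16$. For the lower bound $r^*\ge 16$, I would take the Greenwood--Gleason $3$-edge-colouring of $K_{16}$ with no monochromatic triangle, realised in the palette $\{2,3,4\}$, and extend it to $\binom{[16]}{\le 2}$ by setting $\xi(\{v\}):=1$ for every vertex~$v$; property~(P1) then holds from Greenwood--Gleason in colours $2,3,4$ and vacuously in colour~$1$, while (P2) holds because no edge has colour~$1$.

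For the upper bound, let $\xi:\binom{[n]}{\le 2}\to[4]$ be any feasible colouring and set $V_i:=\{v:\xi(\{v\})=i\}$. The main technical ingredient will be a neighbourhood lemma: for every $v\in V_i$ and every $c\ne i$, the colour-$c$ neighbourhood $N_c(v):=\{u:\xi(\{u,v\})=c\}$ satisfies $|N_c(v)|\le 5$, with equality only when $N_c(v)\subseteq V_j$ for a single $j\ne c$. To prove the lemma I would observe that inside $N_c(v)$ no edge may use colour $c$ (triangle avoidance with~$v$), and that any edge crossing between $V_{j_1}\cap N_c(v)$ and $V_{j_2}\cap N_c(v)$ must take the \emph{unique} colour in $[4]\setminus\{j_1,j_2,c\}$; call it $c^*$. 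Writing $J:=\{j\ne c:V_j\cap N_c(v)\ne\emptyset\}$, I would case-split on $|J|\in\{1,2,3\}$: $|J|=1$ gives a $2$-edge-coloured clique with no monochromatic triangle, so the size is at most $R(3,3)-1=5$; $|J|=2$ forbids $c^*$-edges on each internal side (else a $c^*$-triangle through the other side), leaving a single allowed internal colour whose triangle-free constraint caps each side at $2$; $|J|=3$ forbids \emph{both} internal colours on every side, forcing each side down to one vertex. Summing over the three colours $c\ne i$ at any $v\in V_i$ gives $n-1=\sum_{c\ne i}|N_c(v)|\le 15$, so $n\le 16$; combined with the construction this yields $r^*=16$.

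For niceness, suppose $\xi$ is feasible on $[16]$ and uses at least two distinct singleton colours. Then the degree bound is tight at every vertex, so each $|N_c(v)|=5$ and the equality case of the lemma forces $N_c(v)\subseteq V_{j(c,v)}$ for a unique $j(c,v)\ne c$. Hence $|N_c(v)\cap V_i|\in\{0,5\}$ for each $c\ne i$, whence $|V_i|-1\in\{0,5,10,15\}$ and so $|V_i|\in\{1,6,11,16\}$; since at least two classes are non-empty, no class is all of $[16]$, and each non-empty $|V_i|$ lies in $\{1,6,11\}$. A direct check shows that $16$ is not expressible as a sum of between $2$ and $4$ positive terms from $\{1,6,11\}$ (any term equal to $11$ leaves an impossible residue~$5$, and the remaining options using only $1$s and $6$s also fail), which is the desired contradiction.

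The hardest step will be the neighbourhood lemma, in particular tracking why the cross-colour $c^*$ between any two classes meeting $N_c(v)$ is \emph{unique} --- a feature specific to having exactly $k=4$ colours with one of them excluded --- and then propagating the resulting triangle constraints to kill the internal colours. Once the lemma is in hand, the partition argument at $n=16$ is a brief arithmetic enumeration.
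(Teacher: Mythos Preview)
Your proof is correct, and the neighbourhood lemma with its case split on $|J|$ is sound: in particular, the ``unique cross-colour'' observation is exactly what makes $k=4$ work, and your equality analysis (only $|J|=1$ attains $5$) is right.

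The paper proceeds differently. Rather than proving your neighbourhood lemma from scratch, it invokes the already-established $3$-colour result (Theorem~\ref{thm-conjfor3}) as a black box: for $v\in V_4$ and each $c\in[3]$, the restriction of $\xi$ to $\binom{W_c}{\le 2}$ (where $W_c=N_c(v)$) is a feasible $3$-colouring in the palette $[4]\setminus\{c\}$, so $|W_c|\le r^*(K_3,K_3,K_3)=5$, and at equality the niceness of the $3$-colour case forces all singletons in $W_c$ to share a colour. Your case analysis on $|J|$ is, in effect, an inline re-proof of precisely this instance of Theorem~\ref{thm-conjfor3}. The paper's endgame is also a bit shorter: it picks $v\in V_4$ once, deduces that $V_3$ (chosen with $|V_3|\not\equiv 0\pmod 5$, possible since $5\nmid 16$) is a disjoint union of some of the $W_c$'s, hence has size divisible by~$5$ --- a single contradiction, without your enumeration over $\{1,6,11\}$. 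Your route is self-contained and makes the mechanism more transparent; the paper's is more modular and reuses earlier work.
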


The following is an immediate corollary of Theorems~\ref{thm-exact},~\ref{thm-conjfor3} and~\ref{thm-4triangles}. In particular, the special case $a_1=a_2=a_3=3$ answers Question~\ref{q-jiema} affirmatively.
\begin{corollary}
	Let $3\le a_1\le a_2\le a_3$ be integers. Then for sufficiently large $n$, $$\nim(n;K_{a_1},K_{a_2},K_{a_3})=t(n,R(a_2,a_3)-1),$$
	$\nim_4(n;K_3)=t(n,16)$, and the \emph{$\NIM$}-graph of every extremal colouring is the corresponding Tur\'an graph.\qed
\end{corollary}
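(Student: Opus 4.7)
The plan is to simply assemble the three preceding theorems. First I would verify the hypotheses of Theorem~\ref{thm-exact} for the two families of tuples appearing in the corollary. Each clique $K_t$ with $t\ge 3$ is obviously non-bipartite, so I only need to check that $K_t$ is homomorphism-critical. Any homomorphism from $K_t$ to a graph $G$ must send distinct vertices to distinct, pairwise adjacent vertices of $G$, so every homomorphic copy of $K_t$ contains $K_t$ as a subgraph; consequently the unique minimal homomorphic copy of $K_t$ is $K_t$ itself. For any edge $uv$ of this copy, the identity map $g:V(K_t)\to V(K_t)$ is a homomorphism with $|g^{-1}(u)|=|g^{-1}(v)|=1$ (in fact all fibres are singletons), so $K_t$ is homomorphism-critical.

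Next, Theorem~\ref{thm-conjfor3} guarantees that the sequence $(K_{a_1},K_{a_2},K_{a_3})$ is nice and that
\[
r^*(K_{a_1},K_{a_2},K_{a_3})=R(a_2,a_3)-1,
\]
while Theorem~\ref{thm-4triangles} gives that $(K_3,K_3,K_3,K_3)$ is nice with
\[
r^*(K_3,K_3,K_3,K_3)=R(3,3,3)-1=16.
\]
Hence both sequences satisfy all hypotheses (non-bipartite, homomorphism-critical entries, and niceness) required by Theorem~\ref{thm-exact}.

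Applying Theorem~\ref{thm-exact} then yields, for all sufficiently large $n$,
\[
\nim(n;K_{a_1},K_{a_2},K_{a_3})=t(n,R(a_2,a_3)-1)\quad\text{and}\quad \nim_4(n;K_3)=t(n,16),
\]
together with the statement that in each case the NIM-graph of every extremal colouring is isomorphic to the corresponding Tur\'an graph, which is exactly the content of the corollary. Specialising the first equality to $a_1=a_2=a_3=3$ gives $\nim_3(n;K_3)=t(n,R(3,3)-1)=t(n,5)$, answering Question~\ref{q-jiema} in the affirmative. There is no real obstacle here: all the work is in Theorems~\ref{thm-exact}, \ref{thm-conjfor3} and~\ref{thm-4triangles}, and the only non-trivial unpacking is the (routine) check that cliques are homomorphism-critical.
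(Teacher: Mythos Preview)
Your proposal is correct and matches the paper's approach exactly: the paper presents this as an immediate corollary of Theorems~\ref{thm-exact}, \ref{thm-conjfor3} and~\ref{thm-4triangles} (marked with \qed\ and no proof), and your only additional content---the verification that cliques are homomorphism-critical---is already noted in the paper right after the definition of that notion.
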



\subsection{Bipartite graphs}

For bipartite graphs, we will provide a new family for which Problem~\ref{prob-KS} has a positive answer. Let us call an $h$-vertex graph $H$ \emph{weakly-reducible} if there exist $n_0\in \I N$ and a vertex $v\in V(H)$ such that $\ex(n,H-v)<\ex (n,H)-2^{2h^2}n$ for all $n\ge n_0$. (The function $2^{2h^2}$ comes from the proof and we make no attempt to optimise it.) Note that the family of weakly-reducible graphs includes all reducible graphs except the path of length $2$ and this inclusion is strict. For example, for integers $t>s\ge 2$, the disjoint union of the complete bipartite graphs $K_{2,t}$ and $K_{2,s}$ is weakly-reducible but not reducible; this
can be easily deduced from the result of F\"uredi~\cite{Furedi96jcta} that $\ex(n,K_{2,k})=(\sqrt{k}/2+o(1))\,n^{3/2}$ for any fixed $k\ge 2$ as $n\to\infty$.

\begin{theorem}\label{thm-weaklyred}
	Let $H$ be a weakly-reducible bipartite graph and $n$ be sufficiently large. Then
	\begin{eqnarray*}
	\nim_2(n;H)=\ex (n,H).
	\end{eqnarray*}
	Furthermore, every extremal colouring has one of its colour classes isomorphic to a maximum $H$-free graph of order~$n$.
\end{theorem}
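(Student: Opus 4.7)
The lower bound $\nim_2(n;H)\ge \ex(n,H)$ is immediate: colour a maximum $H$-free graph with colour $1$ and the remaining edges with colour $2$; every colour-$1$ edge is then a $\NIM$-edge. For the upper bound and the structural conclusion, the plan is to suppose that a $2$-edge-colouring $\phi$ of $K_n$ satisfies $|\NIM(\phi)|\ge \ex(n,H)+1$ and derive a contradiction. Write $R,B$ for the two colour classes and set $N_R:=R\cap\NIM(\phi)$, $N_B:=B\cap\NIM(\phi)$. A crucial preliminary observation is that both $N_R$ and $N_B$ are $H$-free as subgraphs of $K_n$: a copy of $H$ inside $N_R$ would be a monochromatic red $H$ through edges that by definition lie in no red $H$. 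Hence $|N_R|,|N_B|\le \ex(n,H)$.

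The key step is to reduce to the case where one of $R,B$, say $R$, is itself $H$-free, i.e.\ $R=N_R$. Once this is granted, $|R|\le\ex(n,H)$ and so $|B|=\binom{n}{2}-|R|\ge \binom{n}{2}-\ex(n,H)$, making $B$ very dense. The plan is then to use the weakly-reducible property to show $N_B=\emptyset$: for any putative blue NIM-edge $e=xy$, one exhibits a blue copy of $H$ containing $e$ in which $x$ plays the role of the weakly-reducible vertex $v$ and $y$ plays a prescribed neighbour of $v$ in $H$. This amounts to finding an embedding of $H-v$ into $B\setminus\{x\}$ with $y$ pinned as the image of that prescribed neighbour and all other neighbours of $v$ embedded into the blue neighbourhood of $x$. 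Since $|B|\ge \binom{n}{2}-\ex(n,H)>\ex(n,H-v)+2^{2h^2}n$, deleting $x$ from $B$ removes at most $n-1$ edges---well absorbed by the slack $2^{2h^2}n$---so $B\setminus\{x\}$ still has more than $\ex(n,H-v)$ edges and hence contains copies of $H-v$. A union bound over the ``bad'' embeddings (those violating the pinning or the blue-neighbour-of-$x$ constraint) produces a good one, yielding the desired blue $H$ and the contradiction $|\NIM(\phi)|=|R|\le \ex(n,H)$. A boundary case in which both $x$ and $y$ have atypically large red degree is handled symmetrically by switching the roles of $x$ and $y$.

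The main obstacle is establishing that at least one of $R,B$ must be $H$-free. If instead both contain a copy of $H$, the strategy is to use the weakly-reducible vertex $v$ to bound $|N_R|+|N_B|$ more tightly than the trivial $2\ex(n,H)$, by counting monochromatic copies of $H-v$ in each colour and showing, via supersaturation together with the bound $\ex(n,H-v)<\ex(n,H)-2^{2h^2}n$, that $|R\setminus N_R|+|B\setminus N_B|>\binom{n}{2}-\ex(n,H)$; this is equivalent to $|\NIM(\phi)|\le\ex(n,H)$, contradicting the assumption. The bookkeeping here, with only a linear gap $2^{2h^2}n$ instead of the polynomial slack Ma enjoyed for reducible $H$, is the most delicate part of the proof, and is precisely where the constant $2^{2h^2}$ in the definition of weakly-reducible is used. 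Finally, the extremal characterization follows by retracing the above inequalities: equality $|\NIM(\phi)|=\ex(n,H)$ forces $|R|=\ex(n,H)$ with $R$ being $H$-free and $N_B=\emptyset$, so $R$ is a maximum $H$-free graph of order $n$ and $\NIM(\phi)=R$.
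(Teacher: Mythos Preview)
Your proposal has two genuine gaps that make it fall short of a proof.

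\textbf{The reduction step is not established.} You state that the ``main obstacle'' is showing that one of $R,B$ is $H$-free, and propose to handle the case where both contain a copy of $H$ by ``counting monochromatic copies of $H-v$ in each colour and showing, via supersaturation\ldots, that $|R\setminus N_R|+|B\setminus N_B|>\binom{n}{2}-\ex(n,H)$''. This is a wish, not an argument. Supersaturation for bipartite $H$ gives only that a graph with $\ex(n,H)+cn^2$ edges contains many copies of $H$; here the excess over $\ex(n,H-v)$ is merely linear, and you give no mechanism linking ``number of copies of $H-v$'' to ``number of non-NIM edges''. Nothing in the sketch explains why the constant $2^{2h^2}$ is the right one, or how to close the inequality.

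\textbf{The ``easy'' direction is also incomplete.} Even granting that $R$ is $H$-free, your argument that $N_B=\emptyset$ requires, for each blue NIM-edge $xy$, an embedding of $H-v$ into $B-x$ with $y$ pinned at a prescribed vertex and all $H$-neighbours of $v$ embedded into the blue neighbourhood of $x$. Knowing only $e(B-x)>\ex(n,H-v)$ gives \emph{some} copy of $H-v$; it says nothing about a copy satisfying these pinning constraints. The phrase ``a union bound over the `bad' embeddings'' is not an argument: you would need a lower bound on the number of copies of $H-v$ that beats the number of bad ones, and no such bound is supplied. Note also that in the weakly-reducible definition $H-v$ need not be connected, so vertex-pinning is not automatic.

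The paper proceeds quite differently. It never shows that a colour class is $H$-free. Instead, a structural lemma (Lemma~\ref{lem-StMat}) shows that if $\nim(\phi;H)>2^{2h^2}n$ then one NIM-colour-class, say $G^{\nim}_1$, is $\{K_{1,h},M_{h/2}\}$-free and hence has at most $h^2$ edges. This is where the constant $2^{2h^2}$ actually comes from. One then picks any edge $uv\in E(G^{\nim}_1)$ and does a case analysis on $\max\{d_{G_1}(u),d_{G_1}(v)\}$: in each case one locates a large set of common $G_2$-neighbours of a suitable vertex, bounds $G^{\nim}_2$ restricted there by $\ex(n,H-w)$, and absorbs the remaining linear error using the gap $\ex(n,H)-\ex(n,H-w)>2^{2h^2}n$. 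The weakly-reducible hypothesis is used only in this final comparison, not via any supersaturation.
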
 

For a general bipartite graph $H$, we give in the following two theorems a weaker bound with an additive constant error term, namely, 
$$\nim_2(n;H)\le \ex(n,H)+O_H(1).$$ 
This provides more evidence towards Problem~\ref{prob-KS}.

\begin{theorem}\label{thm-cyclic}
	Let $H$ be a bipartite graph on at most $h$ vertices containing at least one cycle. Then for sufficiently large $n$,
	\begin{eqnarray*}
	\nim_2(n;H)\le\ex (n,H)+h^2.
	\end{eqnarray*}
\end{theorem}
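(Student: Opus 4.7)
The plan is to argue by contradiction: suppose $\phi$ is a 2-edge-colouring of $K_n$ with $|E(G^{\nim})| \ge \ex(n, H) + h^2 + 1$, and write $F := G^{\nim}$. Partition $F = F_R \sqcup F_B$ according to the colour assigned by $\phi$. A crucial first observation is that each of $F_R, F_B$ is $H$-free: any copy of $H$ inside $F_R$ would be monochromatic (red) by construction, so its edges would lie in a monochromatic copy of $H$, contradicting the NIM property. In particular, $|F_R|, |F_B| \le \ex(n, H)$, and every copy of $H$ sitting inside $F$ necessarily uses edges of both colours.

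Next, since $|E(F)| > \ex(n, H)$, $F$ contains at least one copy $H_0$ of $H$; set $U := V(H_0)$, so $|U| \le h$. I would delete from $F$ the at most $\binom{h}{2}$ edges lying inside $K_n[U]$, obtaining a subgraph $F'$. The key combinatorial claim is that $F'$ is already $H$-free; this immediately yields $|E(F)| \le |E(F')| + \binom{h}{2} \le \ex(n, H) + h^2$, contradicting the assumption.

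To establish the claim, suppose for contradiction that $F'$ contains another copy $H_1$ of $H$. Then $H_1$ uses no edge with both endpoints in $U$, so $V(H_1) \cap U$ is an independent set of $H_1$. Like $H_0$, the copy $H_1$ is non-monochromatic by the first observation, and it contains a cycle $C_1$ (since $H$ does). The plan is then to exhibit a monochromatic copy of $H$ in $K_n$ that uses some edge of $H_0$ or $H_1$, contradicting the NIM property. Intuitively, one uses the cycle $C_1$, which is forced to pass through vertices outside $U$, to attach to the coloured structure on $U$ given by the edges of $H_0$ and the non-NIM edges in $K_n[U]$ (each of which lies in some monochromatic copy of $H$), and combines these to build a single monochromatic $H$ through some NIM edge.

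The main obstacle is the last step: constructing an explicit monochromatic $H$ from two overlapping non-monochromatic copies $H_0$ and $H_1$. This is where the cycle in $H$ plays an essential role, providing the rigidity needed to force some edge of $E(H_0) \cup E(H_1)$ to admit a monochromatic completion to $H$ in $K_n$. A careful case analysis on the possible colour patterns of $H_0$'s edges inside $U$, combined with the structure of $C_1$ and the monochromatic copies covering the non-NIM edges inside $K_n[U]$, should yield the required contradiction.
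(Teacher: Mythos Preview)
Your approach diverges entirely from the paper's, and its core---the ``key combinatorial claim'' that $F' := F \setminus E(F[U])$ is $H$-free---is left unproved. You yourself flag this as ``the main obstacle,'' and the sketch you offer (combine the cycle $C_1 \subseteq H_1$ with colour information on $U$ and with the monochromatic $H$-copies covering non-NIM edges of $K_n[U]$) is not an argument. Consider already the case $V(H_1)\cap U=\emptyset$: then $H_0$ and $H_1$ are two vertex-disjoint non-monochromatic copies of $H$ inside the NIM-graph, and nothing in your setup links them. The monochromatic $H$-copies through non-NIM pairs in ${U\choose 2}$ may live anywhere in $K_n$ and need not meet $H_1$; the cycle $C_1\subseteq H_1$ gives no leverage on~$U$. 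I see no mechanism here that forces a monochromatic $H$ through a NIM-edge, and no reason to believe the claim is true for general bipartite~$H$ with a cycle.

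The paper's proof is completely different and never attempts to control copies of $H$ inside $G^{\nim}$ directly. It rests on a structural lemma (Lemma~\ref{lem-StMat}): whenever $\nim(\phi;H)>2^{2h^2}n$, one colour class $G^{\nim}_i$ of the NIM-graph contains neither the star $K_{1,h}$ nor the matching $M_{h/2}$, and therefore has at most $\ex(n,\{K_{1,h},M_{h/2}\})\le h^2$ edges. Since $H$ contains a cycle, $\ex(n,H)/n\to\infty$; so for large $n$ either $\nim_2(n;H)\le 2^{2h^2}n<\ex(n,H)$ trivially, or the lemma applies and
\[
e(G^{\nim})=e(G^{\nim}_1)+e(G^{\nim}_2)\le h^2+\ex(n,H).
\]
The cycle hypothesis is used solely to make $\ex(n,H)$ superlinear, not for any embedding or case analysis.
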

For more than 2 colours, we obtain an asymptotic result for trees. Fix a tree $T$, by taking random overlays of $k-1$ copies of extremal $T$-free graphs, we see that $\nim_k(n;T)\ge (k-1)\ex(n,T)-O_{k,|T|}(1)$ (this construction is from Ma~\cite{Ma17jctb}). We prove that this lower bound is asymptotically true.
\begin{theorem}\label{thm-tree}
    Let $T$ be a forest with $h$ vertices. If $k=2$ or if $T$ is a tree, then there exists a constant $C:=C(k,h)$ such that, for all sufficiently large $n$,
	\begin{eqnarray*}
	\big|\,\nim_k(n;T)-(k-1)\,\ex (n,T)\,\big|\le C(k,h).
	\end{eqnarray*}
\end{theorem}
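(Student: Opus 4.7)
The lower bound $\nim_k(n;T)\ge (k-1)\ex(n,T)-O_{k,h}(1)$ is given by the construction recalled immediately before the theorem: overlay $k-1$ extremal $T$-free graphs on colours $1,\dots,k-1$ and assign the remaining edges to colour $k$. Since the resulting colour-$k$ subgraph is very dense, for $n$ large every colour-$k$ edge lies in a colour-$k$ copy of $T$, so all NIM-edges come (up to lower-order terms) from the first $k-1$ colour classes.

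For the upper bound, fix a $k$-edge-colouring $\phi$ and write $G_i$ and $N_i$ for its colour-$i$ subgraph and NIM-edge-set. Each $N_i$ is $T$-free: a copy of $T$ in $N_i$ would be a colour-$i$ $T$-copy in $G_i$ covering its own edges, contradicting their NIM status. This only yields the trivial bound $\sum_i|N_i|\le k\,\ex(n,T)$, and I need to save an additive $\ex(n,T)$. The key structural tool is a greedy embedding: since $T$ is a tree on $h$ vertices, for any graph $G$ and any vertex $u$ lying in the $h$-core $C$ of $G$ (the maximum subgraph with minimum degree $\ge h$), every $G$-edge at $u$ lies in a copy of $T$, obtained by mapping a leaf $\ell$ of $T$ to the other endpoint of the edge, its neighbour in $T$ to $u$, and extending $T-\ell$ inside $C$ greedily (this succeeds as $C$ has minimum degree at least $h\ge |V(T-\ell)|$). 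Consequently every NIM-edge of colour $i$ has both endpoints in the peel $P_i:=V\setminus C_i$, and the standard peeling estimate gives $|N_i|\le |E(G_i[P_i])|\le (h-1)|P_i|$.

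A global double count based on $\sum_id_{G_i}(v)=n-1$ combined with the implication $v\in C_i\Rightarrow d_{G_i}(v)\ge h$ shows that for $n>k(h-1)$ every vertex belongs to some core $C_i$, and carefully summing yields $\sum_i|P_i|\le (k-1)n+O_{k,h}(1)$. To upgrade this to the claimed $(k-1)\ex(n,T)+O_{k,h}(1)$ bound I would compare $|N_i|$ colour-by-colour to $\ex(n,T)$ itself rather than to the loose peel estimate: using that $N_i$ is $T$-free together with the partition constraint $\sum_i|E(G_i)|=\binom{n}{2}$ forces the $G_i$'s into a very restricted near-extremal-$T$-free shape except possibly in one colour, against which the peel estimate is sharp up to $O_{k,h}(1)$. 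For the $k=2$ case with $T$ a forest rather than a tree, the greedy embedding breaks down (no single high-degree anchor accommodates all components of $T$ simultaneously); instead, I would argue directly that $N_1\cup N_2$ is ``almost'' $T$-free: any copy of $T$ in $N_1\cup N_2$ must use edges of both colours, and a local case analysis (mimicking e.g.\ the argument for $T=2K_2$) bounds the number of such cross-colour copies by $O_h(1)$, giving $|N_1|+|N_2|\le\ex(n,T)+O_h(1)$ after deleting one edge per copy. The main obstacle throughout is that each rough inequality above loses $\Theta(n)$ in slack, so converting the double count into a genuinely $O_{k,h}(1)$-additive bound requires a stability-type analysis of near-extremal $T$-free colour classes, combined with simultaneous control of the peeling processes across all $k$ colours.
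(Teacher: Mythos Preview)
Your upper-bound argument has a genuine gap that you yourself flag at the end: the core/peel decomposition only yields $\sum_i|N_i|\le (h-1)\sum_i|P_i|\le (h-1)(k-1)n$, which is off from $(k-1)\ex(n,T)$ by a multiplicative constant, not an additive one. Even if you replace $(h-1)|P_i|$ by $\ex(|P_i|,T)$ and invoke superadditivity, you land at $\ex((k-1)n,T)$, and there is no general reason why $\ex((k-1)n,T)-(k-1)\ex(n,T)=O_{k,h}(1)$; Fekete only gives $o(n)$. The vague appeal to a ``stability-type analysis of near-extremal $T$-free colour classes'' does not supply the missing mechanism, and your proposed $k=2$ forest argument (bounding cross-colour $T$-copies in $N_1\cup N_2$ by $O_h(1)$) is unsubstantiated and, as stated, likely false.

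The paper's proof avoids peels entirely and works instead with $A_i:=\{v:v\text{ is incident to a NIM-}i\text{-edge}\}$ and the refined partition $B_X:=\{v:v\in A_i\Leftrightarrow i\in X\}$ for $X\subseteq[k]$. The key lemma is that whenever $X\cup Y=[k]$ one has $\min(|B_X|,|B_Y|)<6kh$: otherwise a dense monochromatic bipartite graph between $B_X$ and $B_Y$ lets one greedily embed $T$ through a NIM-edge. This forces a dichotomy. Either some $(k-1)$-set $X$ has $|B_X|\ge 6kh$, whence the missing colour $j$ has $|A_j|=O_{k,h}(1)$ and $\nim_k\le(k-1)\ex(n,T)+O(1)$ immediately; or all $(k-1)$-sets and $B_{[k]}$ are small, so almost every vertex lies in at most $k-2$ of the $A_i$'s, giving $\sum_i|A_i|\le(k-2)n+O_{k,h}(1)$ --- a full $n$ better than your $(k-1)n$. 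With this saving, superadditivity plus Fekete (choosing $c$ so that $(\tau+c)(k-2)<(k-1)(\tau-c)$) gives $\nim_k\le\ex((k-2)n+O(1),T)\le(k-1)\ex(n,T)$ for large $n$. For $k=2$ the second case collapses to $\sum_i|A_i|=O_h(1)$, so no superadditivity is needed and forests are handled uniformly. The idea you are missing is precisely this $B_X$-dichotomy; without it, the additive $O(1)$ is out of reach.

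A minor point on the lower bound: the $O_{k,h}(1)$ loss in the overlay construction comes from collisions among the $k-1$ permuted copies of the extremal graph, not from colour-$k$ edges possibly being NIM (which would only help). By construction each $G_i\subseteq\sigma_i(H)$ is $T$-free, so every edge of colour $<k$ is automatically NIM; the expected number of collisions is $O(e(H)^2/n^2)=O_{k,h}(1)$.
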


\nib{Organisation of the paper.} We first introduce some tools in Section~\ref{sec-prelim}. Then in Section~\ref{sec-bipartite}, we will prove Theorems~\ref{thm-weaklyred},~\ref{thm-cyclic}, and~\ref{thm-tree}. In Section~\ref{sec-ram-nim}, we will prove Theorems~\ref{thm-ram-nim} and~\ref{thm-weakstab}. We will present the proof for Theorem~\ref{thm-exact} in Section~\ref{sec-exact} and the proofs of Theorems~\ref{thm-conjfor3} and~\ref{thm-4triangles} in Section~\ref{sec-2specialcases}. Finally, in Section~\ref{sec-concluding} we give some concluding remarks.

\section{Preliminaries}\label{sec-prelim}

In this section, we recall and introduce some notation and tools. Recall that $[m]:=\{1,2,\ldots,m\}$ and ${X\choose i}$ (resp.\ ${X\choose \le i}$) denotes the set of all subsets of a set $X$ of size $i$ (resp.\ at most $i$). We also use the term \emph{$i$-set} for a set of size $i$.  We may abbreviate a singleton $\{x\}$ (resp.\ a pair $\{x,y\}$) as $x$ (resp.\ $xy$). If we claim, for example, that a result holds whenever $1\gg a\gg b>0$, this means that there are a constant $a_0\in (0,1)$ and a non-decreasing
 function $f : (0, 1) \rightarrow (0, 1)$ (that may depend on any previously defined constants or functions) such that the result holds for all $a,b\in (0,1)$ with $a\le a_0$ and $ b \leq f(a)$. 
   We may omit floors and ceilings when they are not essential.

Let $G=(V,E)$ be a graph. Its \emph{order} is $v(G):=|V|$ and its
\emph{size} is $e(G):=|E|$.
The \emph{complement} of $G$ is $\overline{G}:=\left(V,{V\choose 2}\setminus E\right)$.
The chromatic number of $G$ is denoted by~$\chi(G)$.
For $U\subseteq V$,
let $G[U]:=(U,\{xy\in E: x,y\in U\})$ denote the subgraph of $G$ induced by $U$.
Also, denote
 \begin{eqnarray*}
 N_G(v,U)&:=&\{u\in U\mid uv\in E\},\\
 d_{G}(v,U)&:=&|N_{G}(v,U)|,
 \end{eqnarray*}
 and abbreviate $N_G(v):=N_G(v,V)$ and $d_G(v):=d_{G}(v,V)$.
 Let $\delta(G):=\min\{d_G(v):v\in V\}$
  denote the minimum degree of~$G$.

  Let $\mathcal{U}=\{U_1,U_2,\ldots,U_k\}$ be a collection of disjoint subsets of $V$.
  We write $G[U_1,\ldots,U_k]$ or $G[\mathcal{U}]$ for the multipartite subgraph of $G$ with vertex set $U:=\cup_{i\in[k]} U_i$ where we keep the \emph{cross-edges} of $G$ (i.e.\ edges that connect two parts); equivalently, we remove all edges from $G[U]$ that lie inside a part $U_i\in\mathcal{U}$. In these shorthands, we may omit $G$ whenever it is clear from the context, e.g.\ writing $[U_1,\dots,U_k]$ for $G[U_1,\dots,U_k]$. We say that $\mathcal{U}$ is a \emph{max-cut $k$-partition} of $G$ if $e(G[U_1,\ldots,U_k])$ is maximised over all $k$-partitions of $V(G)$.

For disjoint sets $V_1,\dots,V_t$ with $t\ge 2$, let $\K{V_1,\dots,V_t}$ denote the complete $t$-partite graph with parts $V_1,\dots,V_t$. Its isomorphism class is denoted by $K_{|V_1|,\dots,|V_t|}$. For example, if part sizes differ by at most $1$, then we get the Tur\'an graph
$T(|V_1|+\ldots+|V_t|,t)$. Let $M_{h}$ denote the matching with $h$ edges.

\begin{definition}
	For an edge-colouring $\phi:{[n]\choose 2}\rightarrow [k]$ of $G:=K_n$, define \emph{$\NIM(\phi;H_1,\ldots,H_k)$} to be the set of all edges not contained in any monochromatic copy of $H_i$ in colour $i$, and let \emph{$\nim(\phi;H_1,\ldots,H_k):=|\NIM(\phi;H_1,\ldots,H_k)|$}. Thus
	\begin{eqnarray*}
		\nim(n;H_1,\ldots,H_k)=\underset{\phi:E(K_n)\rightarrow[k]}{\max}\nim(\phi;H_1,\ldots,H_k).
	\end{eqnarray*}
	If the $H_i$'s are all the same graph $H$, then we will use the shorthands \emph{$\NIM_k(\phi;H)$}, $\nim_k(\phi;H)$ and $\nim_k(n;H)$ respectively. Also, we may drop $k$ when $k=2$
	and omit the graphs $H_i$ when these are understood. 
	Let $G^{\nim}$ be the spanning subgraph of $G$ with \emph{$E(G^{\nim})=\NIM(\phi;H_1,\ldots,H_k)$}. For $i\in [k]$, denote by $G_i$ and $G^{\nim}_i$ the spanning subgraphs of $G$ with edge-sets $E(G_i)=\{e\in E(G):\phi(e)=i\}$ and $E(G^{\nim}_i)=\{e\in E(G^{\nim}):\phi(e)=i\}$. We call an edge $e\in E(G^{\nim})$ (respectively, $e\in E(G^{\nim}_i)$) a \emph{\NIM-edge}  (resp.\ a \emph{\NIM-$i$-edge}). 
\end{definition}

\begin{definition}\label{def-bu}
 For $\xi:{[t]\choose \le 2}\rightarrow [k]$ and disjoint sets $V_1,\dots,V_t$, the \emph{blow-up colouring} $\xi(V_1,\dots,V_t): {V_1\cup \dots\cup V_t\choose 2}\to [k]$ is defined by
\begin{eqnarray*}
		\xi(V_1,\dots,V_t)(xy):=\begin{cases}
			\xi(ij),&\quad\text{if}\ xy\in E(\K{V_i,V_j}),\\
			\xi(i),&\quad\text{if}\ x,y\in V_i.
		\end{cases}
	\end{eqnarray*}
	If $|V_i|=N$ for every $i\in[t]$, then we say that $\xi(V_1,\dots,V_t)$ is an $N$-\emph{blow-up} of $\xi$.
\end{definition}

We say that a colouring $\phi$ \emph{contains} another colouring $\psi$ and denote this by $\phi\supseteq \psi$ if $\psi$ is a restriction of $\phi$. In particular, $\phi\supseteq \xi(V_1,\dots,V_t)$ means that $\phi$ is defined on every pair inside $V_1\cup\dots\cup V_t$ and coincides with $\xi(V_1,\dots,V_t)$ there.

We will make use of the multicolour version of the Szemer\'edi Regularity Lemma (see, for example,~\cite[Theorem 1.18]{KomlosSimonovits96}). Let us recall first the relevant definitions. Let $X,Y\subseteq V(G)$ be disjoint non-empty sets of vertices in a graph $G$. The \emph{density} of $(X,Y)$ is    
 $$d(X,Y):=\frac{e(G[X,Y])}{|X|\,|Y|}.
 $$
  For $\ep>0$, the pair $(X,Y)$ is {\it $\ep$-regular} if for every pair of subsets $X'\subseteq X$ and $Y'\subseteq Y$ with $|X'|\ge\ep|X|$ and $|Y'|\ge\ep|Y|$, we have $|d(X,Y)-d(X',Y')|\le \ep$. Additionally, if $d(X,Y)\ge\ga$, for some $\ga>0$, we say that $(X,Y)$ is {\it $(\ep,\ga)$-regular}.   A partition $\cP=\{V_1,\ldots,V_m\}$ of $V(G)$ is an {\it $\ep$-regular partition} of a $k$-edge-coloured graph $G$ if
 \begin{enumerate}
 \item for all $ij\in {[m]\choose 2}$, $\big|\,|V_i|-|V_j|\,\big|\le 1$; 
 \item  for all but at most $\ep {m\choose 2}$ choices of $ij\in {[m]\choose 2}$, the pair $(V_i,V_j)$ is $\ep$-regular in each colour.
 \end{enumerate}
 
\begin{lemma}[Multicolour Regularity Lemma]\label{lm:RL}
	For every real $\ep>0$ and integers $k\ge 1$ and $M$, there exists $M'$ such that every $k$-edge-coloured graph $G$ with $n\ge M$ vertices admits an $\ep$-regular partition $V(G)=V_1\cup \ldots\cup V_r$ with $M\le r\le M'$.
\end{lemma}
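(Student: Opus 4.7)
The plan is to run the standard Szemer\'edi argument with a multicolour mean-square index. For a partition $\mathcal{P} = \{V_1, \ldots, V_m\}$ of $V(G)$ and each colour $i \in [k]$, let $d_i(V_s, V_t)$ denote the colour-$i$ density of $(V_s,V_t)$ and set
\[
  q(\mathcal{P}) \;:=\; \sum_{i=1}^k \sum_{1 \le s < t \le m} \frac{|V_s|\,|V_t|}{n^2}\; d_i(V_s, V_t)^2 .
\]
Clearly $0 \le q(\mathcal{P}) \le k/2$. The two basic lemmas of the one-colour proof go through colour by colour: refining $\mathcal{P}$ never decreases $q$ (by convexity applied to each summand separately), and whenever a pair $(V_s, V_t)$ is $\ep$-irregular in some colour $i$, the witness subsets yield a refinement of $\{V_s, V_t\}$ whose contribution to $q$ exceeds the original by at least $\ep^{4}\,|V_s|\,|V_t|/n^2$.

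With these two ingredients I iterate in the usual way. Starting from an arbitrary equitable partition into $M$ parts, suppose the current partition $\mathcal{P}$ has at least $\ep\binom{m}{2}$ pairs that are $\ep$-irregular in at least one colour. Pigeonhole then produces a colour $i$ witnessing irregularity on at least $(\ep/k)\binom{m}{2}$ pairs; simultaneously refining all such pairs by their witnesses increases $q$ by a definite amount of order $\ep^{5}/k$. Since $q \le k$, the process terminates after at most $O(k^{2}/\ep^{5})$ rounds, at which point the partition is $\ep$-regular in every colour. A standard equitable clean-up (redistributing vertices so that part sizes differ by at most one, at the cost of slightly worsening the regularity parameter and multiplying the number of parts by a bounded factor) yields a partition satisfying both conditions of the definition preceding the lemma; prefixing the whole argument with an arbitrary equitable partition into exactly $M$ parts enforces the lower bound $r\ge M$.

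The only real obstacle is the familiar bookkeeping of the classical proof: each refinement step may multiply the number of parts by a factor of order $2^{m}$, so the final bound $M'$ comes out as a tower of exponentials in $\ep^{-1}$ and $k$. No new conceptual idea beyond Szemer\'edi's single-colour lemma is required; the sole modification is the pigeonhole over colours, which costs only a polynomial factor in $k$ at each stage of the argument and is absorbed into $M'$.
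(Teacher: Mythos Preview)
Your sketch is correct and is essentially the standard energy-increment proof of the multicolour regularity lemma. Note, however, that the paper does not prove this lemma at all: it is quoted as a known result and attributed to the survey of Koml\'os and Simonovits (see the reference to \cite[Theorem~1.18]{KomlosSimonovits96} immediately preceding the statement). So there is no ``paper's own proof'' to compare against; your argument is simply the textbook one. One minor remark: the pigeonhole step over colours is unnecessary---for each pair that is irregular in \emph{some} colour you may refine by a witness in that colour and credit the increment to that colour's term in $q$, which gives an increment of order $\ep^{5}$ rather than $\ep^{5}/k$---but this only affects the tower height in $M'$ and not the correctness of the argument.
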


Given $\ep,\ga>0$, a graph $G$, a colouring $\phi:E(G)\rightarrow [k]$ and a  partition $V(G)=V_1\cup\dots\cup V_r$, define the \emph{reduced graph} 
 \begin{equation}\label{eq:R}
 R:=R(\ep,\gamma,\phi,(V_i)_{i=1}^r)
 \end{equation}
  as follows:
 $V(R)=\{V_1,\ldots,V_r\}$ and $V_i$ and $V_j$ are adjacent in $R$ if $(V_i,V_j)$ is $\ep$-regular with respect to the colour-$\ell$ subgraph of $G$ for every $\ell\in[k]$ and the colour-$m$ density of $(V_i,V_j)$ is at least $\gamma$ for some $m\in[k]$. For brevity, we may omit $\phi$ or $(V_i)_{i=1}^r$ in~\eqref{eq:R} when these are clear. 
 The graph $R$ comes with the \emph{majority edge-colouring} which assigns to  each edge $V_iV_j\in E(R)$ the colour that is the most common one among the edges in $G[V_i,V_j]$ under the colouring $\phi$. In particular, the majority colour has density at least $\gamma$ in $G[V_i,V_j]$. We will use the following consequence of the Embedding Lemma (see e.g.~\cite[Theorem 2.1]{KomlosSimonovits96}).

\begin{lemma}[Emdedding Lemma]\label{lem-embedding}
	Let $H$ and $R$ be graphs and let $1\ge \gamma\gg \ep\gg 1/m>0$. Let $G$ be a graph obtained by replacing every vertex of $R$ by $m$ vertices, and replacing the edges of $R$ with $\ep$-regular pairs of density at least $\gamma$. If $R$ contains a  homomorphic copy of $H$, then $H\subseteq G$. 
\end{lemma}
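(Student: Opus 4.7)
The plan is to embed $H$ into $G$ greedily, vertex by vertex, guided by a homomorphism $\psi : V(H) \to V(R)$ whose existence is given by hypothesis. For each $j \in V(R)$, let $V_j \subseteq V(G)$ denote the $m$-vertex cluster corresponding to $j$. The key point is that for every edge $v v' \in E(H)$ we have $\psi(v)\psi(v') \in E(R)$, so the pair $(V_{\psi(v)},V_{\psi(v')})$ is $\ep$-regular of density at least $\gamma$ in $G$. (If $\psi(v)=\psi(v')$ for distinct $v,v'$ this would force a loop at $\psi(v)\in V(R)$, which does not exist, so whenever two vertices of $H$ get identified by $\psi$ they are non-adjacent, and no regularity condition is needed for such a pair.)

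Fix an ordering $V(H) = \{v_1,\dots,v_h\}$ and process the $v_t$ in order. After stage $t$ we will have chosen distinct vertices $x_1,\dots,x_t$ with $x_s \in V_{\psi(v_s)}$ and $x_s x_{s'} \in E(G)$ for every $s,s' \leq t$ with $v_sv_{s'}\in E(H)$. For each $i>t$ maintain a \emph{candidate set}
$$
C_i^{(t)} \;=\; \bigl\{\, x \in V_{\psi(v_i)} \,:\, x x_s \in E(G) \text{ for every } s\le t \text{ with } v_sv_i\in E(H)\,\bigr\}.
$$
Initially $C_i^{(0)}=V_{\psi(v_i)}$. At stage $t+1$, I want to choose $x_{t+1}\in C_{t+1}^{(t)}\setminus\{x_1,\dots,x_t\}$ so that, for every $i>t+1$ with $v_{t+1}v_i \in E(H)$, the updated set $C_i^{(t+1)} := C_i^{(t)}\cap N_G(x_{t+1})$ is still not too small.

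The $\ep$-regularity of $(V_{\psi(v_{t+1})}, V_{\psi(v_i)})$, applied to the ``big'' subset $C_i^{(t)}$, tells us that at most $\ep m$ vertices of $V_{\psi(v_{t+1})}$ are \emph{bad} for index $i$ in the sense of having fewer than $(\gamma-\ep)\bigl|C_i^{(t)}\bigr|$ neighbours in $C_i^{(t)}$ — provided $\bigl|C_i^{(t)}\bigr|\ge \ep m$. Union-bounding over the at most $h$ indices $i>t+1$ and discarding the at most $h$ already-used vertices, the number of admissible choices for $x_{t+1}$ inside $C_{t+1}^{(t)}$ is at least $\bigl|C_{t+1}^{(t)}\bigr| - h\ep m - h$. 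By induction each $C_i^{(t)}$ has been shrunk by a factor of at least $(\gamma-\ep)$ at most $h$ times, so
$$
\bigl|C_i^{(t)}\bigr| \;\ge\; (\gamma-\ep)^{h} m,
$$
which is much larger than $h\ep m + h$ since $\gamma \gg \ep \gg 1/m$ and $h=|V(H)|$ is fixed. Thus the greedy step succeeds at every stage, and after $h$ stages the map $v_t \mapsto x_t$ is an injective embedding of $H$ into $G$.

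I do not expect a genuine obstacle: the only mildly delicate point is that $\psi$ may be non-injective, so several vertices of $H$ may be funnelled into the same cluster $V_j$, and one must verify both that the chosen $x_t$'s are distinct and that non-edges of $H$ between vertices mapped to the same $V_j$ impose no constraint. Both are handled by the observations above, and the whole argument is the standard greedy proof of the embedding lemma applied to a homomorphism rather than an injective image.
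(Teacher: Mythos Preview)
Your greedy embedding argument is correct and is the standard proof of this result. The paper itself does not give a proof of Lemma~\ref{lem-embedding}: it simply states the lemma and cites \cite[Theorem~2.1]{KomlosSimonovits96} as a reference, treating it as a known tool. So there is nothing to compare against beyond noting that your write-up reproduces the classical argument accurately, including the one nontrivial wrinkle here (that the map $\psi$ is only a homomorphism, so several $v_t$ may land in the same cluster~$V_j$, which you handle correctly by observing that such vertices are necessarily non-adjacent in~$H$ and by subtracting the already-embedded vertices when choosing~$x_{t+1}$).
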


We will also need the Slicing Lemma (see e.g.~\cite[Fact 1.5]{KomlosSimonovits96}).

\begin{lemma}[Slicing Lemma]\label{lem-slicing}
	Let $\ep,\alpha,\gamma\in (0,1)$ satisfy $\ep\le \min\{\gamma,\alpha,1/2\}$. If $(A,B)$ is an $(\ep,\gamma)$-regular pair, then for any $A'\subseteq A$ and $B'\subseteq B$ with $|A'|\ge\al|A|$ and $|B'|\ge\al|B|$, we have that $(A',B')$ is an $(\ep', \gamma-\ep)$-regular pair, where $\ep':=\max\{\ep/\al,2\ep\}$.
\end{lemma}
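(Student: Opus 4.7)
The plan is to unpack the definition of $(\ep,\gamma)$-regularity applied to the sub-pair $(A',B')$, exploiting the original regularity of $(A,B)$ in two distinct ways: once to control the density $d(A',B')$ itself, and once to control $d(X,Y)$ for any sufficiently large $X\subseteq A'$ and $Y\subseteq B'$.

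First I would verify the density bound $d(A',B')\ge \gamma-\ep$. Since $\ep\le \alpha$, the sizes satisfy $|A'|\ge \alpha|A|\ge \ep|A|$ and $|B'|\ge \ep|B|$, so $\ep$-regularity of $(A,B)$, applied to the pair $(A',B')$ regarded as subsets of $(A,B)$, gives $|d(A',B')-d(A,B)|\le \ep$. Combining with $d(A,B)\ge \gamma$ yields the claimed bound on $d(A',B')$.

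Next I would check the $\ep'$-regularity condition for $(A',B')$. Fix arbitrary $X\subseteq A'$ and $Y\subseteq B'$ with $|X|\ge \ep'|A'|$ and $|Y|\ge \ep'|B'|$. The choice $\ep'\ge \ep/\alpha$ ensures $|X|\ge (\ep/\alpha)\cdot \alpha|A|=\ep|A|$ and, analogously, $|Y|\ge \ep|B|$. Thus both $(A',B')$ and $(X,Y)$ are sub-pairs of $(A,B)$ with sides of absolute size at least $\ep|A|$ and $\ep|B|$. Applying $\ep$-regularity of $(A,B)$ to each and using the triangle inequality gives
\[
|d(X,Y)-d(A',B')|\le |d(X,Y)-d(A,B)|+|d(A,B)-d(A',B')|\le 2\ep\le \ep',
\]
which is the desired bound.

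The main "obstacle" — really only a small balancing act — is reconciling the two constraints that force the definition $\ep':=\max\{\ep/\alpha,2\ep\}$: rescaling the relative-size bound $\ep'|A'|$ up to the absolute bound $\ep|A|$ demands $\ep'\ge \ep/\alpha$, while the triangle inequality loses a factor of two and demands $\ep'\ge 2\ep$. The assumption $\ep\le 1/2$ guarantees $\ep'<1$, so the statement is non-vacuous. Beyond this bookkeeping, no additional idea is needed; the proof is a direct consequence of the definitions.
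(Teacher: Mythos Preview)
Your proof is correct and follows the standard argument. The paper does not supply its own proof of this lemma; it is stated as a known fact with a reference to \cite[Fact~1.5]{KomlosSimonovits96}, so there is nothing further to compare.
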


\medskip\noindent\textbf{Conventions:}
Throughout the rest of this paper, we will use $G$ as an edge-coloured $K_n$. For a given number of colours $k$ and a sequence of graphs $(H_i)_{i=1}^k$, we will always write $\psi: {[n]\choose 2}\rightarrow [k]$ for an extremal colouring realising $\nim(n;H_1,\ldots,H_k)$. We do not try to optimise the constants nor prove most general results, instead aiming for the clarify of exposition.


\section{Proofs of Theorems~\ref{thm-weaklyred},~\ref{thm-cyclic} and~\ref{thm-tree}}\label{sec-bipartite}

By adding isolated vertices, we can assume that each graph $H_i$ has even order. The following proposition will be frequently used. It basically says that there are no monochromatic copies of $K_{v(H_i),v(H_i)/2}$ in colour $i$ that contains a $\NIM$-$i$-edge. Its proof follows from the fact that every edge of $K_{v(H_i),v(H_i)/2}$ is in an $H_i$-subgraph.

\begin{prop}\label{prop-completebipartite}
	For every graph $G$, fixed bipartite graphs $H_1,\ldots, H_k$, and a $k$-edge-colouring $\phi:E(G)\rightarrow [k]$, we have the following for every vertex $v\in V(G)$ and  $i\in [k]$. Let $U_i:=\{v'\in V(G):vv'\in G_{i}^\nim\}$. 
	\begin{enumerate}[(i)]
		\item For every vertex $u\in U_i$, the graph $G_{i}[N_{G_{i}}(v)\setminus \{u\},N_{G_{i}}(u)\setminus \{v\}]$ is $K_{v(H_i),v(H_i)/2}$-free.
		\item The graph $G_{i}[U_i,V\setminus (U_i\cup \{v\})]$ is $K_{v(H_i),v(H_i)/2}$-free.\qed
	\end{enumerate} 
\end{prop}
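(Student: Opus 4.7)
The proof hinges on a single observation, flagged by the statement itself: because each $H_i$ is bipartite on $h:=v(H_i)$ vertices (with $h$ even after the implicit padding by isolated vertices), every edge of $K_{h,h/2}$ lies in some copy of $H_i$. To verify this, let $(X_1,X_2)$ be the bipartition of $H_i$ with $|X_1|\ge |X_2|$, so $|X_1|\le h$ and $|X_2|\le h/2$. Given any edge $xy$ of $K_{h,h/2}$, with $x$ on the $h$-side and $y$ on the $h/2$-side, I would pick any edge $ab\in E(H_i)$ with $a\in X_1,\,b\in X_2$ and embed $H_i$ into $K_{h,h/2}$ sending $a\mapsto x$, $b\mapsto y$ and the remaining vertices of $X_1, X_2$ arbitrarily into their respective sides. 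The rest of the argument reduces to exhibiting $uv$ as an edge of a monochromatic colour-$i$ complete bipartite graph that contains a copy of $K_{h,h/2}$, and then applying this observation.

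For part~(i), I would assume for contradiction that $G_i[N_{G_i}(v)\setminus\{u\},\,N_{G_i}(u)\setminus\{v\}]$ contains a copy $F\cong K_{h,h/2}$ with parts $A\subseteq N_{G_i}(v)\setminus\{u\}$ and $B\subseteq N_{G_i}(u)\setminus\{v\}$. Consider the disjoint sets $\{u\}\cup A$ and $\{v\}\cup B$: every cross-edge between them is coloured $i$, since $uv\in G_i^{\nim}\subseteq G_i$, the $v$-to-$A$ and $u$-to-$B$ edges lie in $G_i$ by the definitions of $A$ and $B$, and the $A$-to-$B$ edges come from $F$. Thus $G_i$ contains a complete bipartite graph of sizes $h+1$ and $h/2+1$ (in one order or the other) containing $uv$, and by the opening observation $uv$ lies in a colour-$i$ copy of $H_i$, contradicting $uv\in G_i^{\nim}$.

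Part~(ii) uses the same idea with slightly different packaging. If $F\cong K_{h,h/2}$ sits in $G_i[U_i,\,V\setminus(U_i\cup\{v\})]$ with parts $A\subseteq U_i$ and $B\subseteq V\setminus(U_i\cup\{v\})$, then any $u\in A$ satisfies $uv\in G_i^{\nim}$ by the definition of $U_i$. The disjoint sets $A$ and $\{v\}\cup B$ then induce a monochromatic colour-$i$ complete bipartite graph: the $v$-to-$A$ edges are in $G_i$ because $A\subseteq U_i$, and the $A$-to-$B$ edges come from $F$. The part sizes are $\{h,\,h/2+1\}$ or $\{h/2,\,h+1\}$, both of which are large enough, by the opening observation, to yield a colour-$i$ copy of $H_i$ through $uv$, again a contradiction. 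The only subtlety, and the step I would handle most carefully, is the embedding-with-prescribed-edge in the opening observation, where one has to treat both possible orientations of the $K_{h,h/2}$ symmetrically.
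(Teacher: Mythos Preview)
Your proof is correct and follows exactly the approach the paper indicates: the paper's entire proof is the single sentence ``Its proof follows from the fact that every edge of $K_{v(H_i),v(H_i)/2}$ is in an $H_i$-subgraph,'' and you have spelled out precisely how that observation yields both parts. Your handling of the two possible orientations of the $K_{h,h/2}$ and of the augmentation by $u$ and/or $v$ to enlarge the bipartite graph is clean and matches what the paper leaves implicit.
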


One of the key ingredients for the 2-colour case for bipartite graphs is the following lemma, which is proved by extending an averaging argument of Ma~\cite{Ma17jctb}. It states that any 2-edge-colouring of $K_n$ has only linearly many NIM-edges, or there is neither large NIM star nor matching in one of the  colours.

\begin{lemma}\label{lem-StMat}
	For any $h$-vertex bipartite $H$ with $h$ even and any $2$-edge-colouring $\phi$ of $G:=K_n$ with	 $\nim(\phi;H)> 2^{2h^2}n$,
	there exists $i\in[2]$ such that $G^{\nim}_i$ is $\{K_{1,h}, M_{h/2}\}$-free.
\end{lemma}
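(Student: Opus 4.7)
The plan is to prove the contrapositive: if both $G_1^{\nim}$ and $G_2^{\nim}$ contain a copy of $K_{1,h}$ or $M_{h/2}$, then $\nim(\phi;H)\le 2^{2h^2}n$. Fix a witness $W_i\subseteq G_i^{\nim}$ in each colour $i\in[2]$ (either $K_{1,h}$ or $M_{h/2}$), and put $S:=V(W_1)\cup V(W_2)$, so $|S|\le 2h+2$. The structural workhorse will be Proposition~\ref{prop-completebipartite}(i): for every NIM-$i$ edge $uv$ the colour-$i$ bipartite graph $G_i[N_i(u)\setminus v,\,N_i(v)\setminus u]$ is $K_{h,h/2}$-free. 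Since every bipartite graph on $h$ vertices embeds into $K_{h,h/2}$, if we ever locate such a colour-$i$ $K_{h,h/2}$ we obtain a colour-$i$ copy of $H$ through $uv$, contradicting $uv\in G_i^{\nim}$.

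Suppose toward a contradiction that $\nim(\phi;H)>2^{2h^2}n$. I would associate to each $x\in V\setminus S$ its colour pattern $\pi(x)\in\{1,2\}^S$ recording the colour of every edge $xs$ with $s\in S$. There are at most $2^{|S|}\le 2^{2h+2}$ patterns. Since at most $|S|n\le (2h+2)n$ NIM edges are incident to $S$, at least $(2^{2h^2}-2h-2)n$ NIM edges lie inside $V\setminus S$, so by averaging over the at most $2^{2|S|}\le 2^{4h+4}$ ordered pattern pairs and the two colours, there exist a colour $i\in[2]$, a pair of patterns $(\pi_x^*,\pi_y^*)$, and at least $2^{2h^2-4h-6}n$ NIM-$i$ edges $xy\subseteq V\setminus S$ with $\pi(x)=\pi_x^*$ and $\pi(y)=\pi_y^*$. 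In particular, the set $V^*\subseteq V\setminus S$ of vertices of pattern $\pi_x^*$ (respectively $\pi_y^*$) is linear in $n$ and has its entire $S$-colour pattern prescribed.

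The core of the argument is then to pick one such NIM-$i$ edge $e=xy$ and, using the opposite-colour witness $W_{3-i}$ together with $\pi_x^*,\pi_y^*$ and auxiliary vertices drawn from the still-huge reservoir $V^*$, to exhibit a colour-$i$ $K_{h,h/2}$ inside $G_i[N_i(x)\setminus y,\,N_i(y)\setminus x]$. The patterns already fix the colours of all edges from $\{x,y\}$ into $V(W_{3-i})$; one case-analyses on whether $W_{3-i}$ is $K_{1,h}$ or $M_{h/2}$ and on how $\pi_x^*,\pi_y^*$ colour $V(W_{3-i})$. In the favourable sub-cases, $V(W_{3-i})$ (or a subset of it) already supplies the size-$h$ side of the desired $K_{h,h/2}$, since its vertices are colour-$i$ adjacent to both $x$ and $y$ by the pattern; the size-$h/2$ side is then recruited from $V^*\setminus\{x,y\}$, which is abundant enough to contain enough vertices colour-$i$ adjacent to $x$ (and to $y$) as needed. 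In the unfavourable sub-cases, we instead extend $W_{3-i}$ itself into a colour-$(3-i)$ $K_{h,h/2}$ through a witness edge of $W_{3-i}$ (again using $V^*$ as the reservoir), which then yields a colour-$(3-i)$ copy of $H$ through that witness edge, contradicting it being in $G_{3-i}^{\nim}$.

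The main obstacle will be completing this case analysis cleanly: for each of the four combinations of witness types ($K_{1,h}$ or $M_{h/2}$)$\times$($K_{1,h}$ or $M_{h/2}$) and each colour configuration of $\pi_x^*,\pi_y^*$ restricted to $V(W_{3-i})$, one has to verify that at least one of the two routes above succeeds. The generous constant $2^{2h^2}$ is built in precisely to ensure $V^*$ is exponentially larger than the constant-size obstructions arising in each sub-case, so the pigeonhole supply of auxiliary vertices is never exhausted and no attempt at optimisation is required.
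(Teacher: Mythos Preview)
Your overall framework --- fixing a witness in each colour, grouping outside vertices by their colour-pattern to $S$, and then invoking Proposition~\ref{prop-completebipartite} --- is exactly the skeleton of the paper's proof. However, two points prevent the plan from going through as written.

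First, your ``favourable'' route via Proposition~\ref{prop-completebipartite}(i) for the NIM-$i$ edge $xy$ is broken: you propose recruiting the $h/2$-side from $V^*$, but the pattern only controls colours from $V^*$ to $S$, not to $x$ or $y$ (which lie outside $S$), so you cannot place those vertices in $N_i(y)$. This is fixable: use part (ii) instead. Pick $x_0$ with many NIM-$i$-neighbours $Y_0$ of pattern $\pi_y^*$; then $Y_0\subseteq N_{G_i^{\nim}}(x_0)$, and if $\pi_y^*$ takes value $i$ on at least $h/2$ vertices of $S$ you get the forbidden $K_{h,h/2}$ between $Y_0$ and $S$. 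This disposes of the case where one of $\pi_x^*,\pi_y^*$ has $\ge h/2$ entries of colour $i$ --- which is precisely the paper's treatment of the region $X$.

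The genuine gap is the remaining case: both patterns have fewer than $h/2$ entries of colour $i$, so both $V_x^*,V_y^*$ lie in what the paper calls $Y_{3-i}$, and the relevant opposite-colour witness is a matching $M_{h/2}$. Here your choice $S=V(W_1)\cup V(W_2)$ is too small. Knowing that $V_x^*$ is colour-$(3-i)$ adjacent to both endpoints $z_{j,1},z_{j,2}$ of some matching edge only tells you $V_x^*\subseteq N_{3-i}(z_{j,1})\cap N_{3-i}(z_{j,2})$; to apply Proposition~\ref{prop-completebipartite}(i) to the NIM edge $z_{j,1}z_{j,2}$ you still need $h/2$ vertices in $N_{3-i}(z_{j,1})\setminus\{z_{j,2}\}$ that are colour-$(3-i)$ complete to $h$ vertices of $V_x^*$, and nothing in your setup supplies them --- you have no control over colours inside $V_x^*$ or between other vertices and $V_x^*$. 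The paper's key extra idea (its Case~2) is to enlarge $S$ in advance: for each matching edge $e_j$ it adds a set $U_j'$ of up to $h/2$ colour-$(3-i)$ neighbours of $z_{j,1}$ into $S$. Then the pattern $f$ being colour-$(3-i)$ on all of $U_j=U_j'\cup\{z_{j,1},z_{j,2}\}$ forces $|U_j'|=h/2$ and yields the $K_{h/2,h}$ between $U_j'$ and the pattern class, giving the contradiction. Since the $U_j$'s are disjoint, the pigeonhole forces $f$ to take value $i$ on at least one vertex of each $U_j$, contradicting $|f^{-1}(i)|<h/2$. Without this augmentation of $S$, the matching sub-case does not close, and your case analysis cannot be completed.
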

\begin{proof}
	We may assume, without loss of generality, that $G^{\nim}_1$ contains $K_{1,h}$, since otherwise 
	$$\nim(\phi;H)\le 2\cdot \ex(n,K_{1,h})\le (h-1)n,$$ 
	contradicting $\nim(\phi;H)> 2^{2h^2}n$. Let $S_v$ be an $h$-star in $G^{\nim}_1$ centred at $v$. We will show that if $G^{\nim}_2$ contains the star $K_{1,h}$ (Case 1) or the matching $M_{h/2}$ (Case 2), then it follows that $\nim(\phi;H)\le 2^{2h^2}n$, which is a contradiction. In each case, we will define a set  $S\subseteq V(G)$, with $h+1\le |S|\le h^2$, containing $S_v$ as follows. In Case 1, let $S_u$ be an $h$-star centred at $u$ in $G^{\nim}_2$ ($u$ and $v$ are not necessarily distinct). Define $S=V(S_v)\cup V(S_u)$ with $h+1\le |S|\le 2h+2$. In Case 2,  let $M\subseteq G^{\nim}_2$ be a matching with edge set $\{e_1,\ldots,e_{h/2}\}$, where $e_i=z_{i,1}z_{i,2}$ for every $1\le i\le h/2$. Denote $Z:=\cup_{i=1}^{h/2}\{z_{i,1},z_{i,2}\}$. For each edge $e_i\in E(M)$, without loss of generality, assume that $d_{G_2}(z_{i,1})\ge d_{G_2}(z_{i,2})$. Define iteratively for every $i=1,\ldots,h/2$ a set $U'_i$ as follows,
	\hide{	\begin{eqnarray*}
			\begin{cases}
				U'_i\subseteq N_{G_2}(z_{i,1})\setminus\left(Z\cup\left(\cup_{j=1}^{i-1}U'_j\right)\right),~|U'_i|=\frac{h}{2} &\quad\text{if }|N_{G_2}(z_{i,1})\setminus\left(Z\cup\left(\cup_{j=1}^{i-1}U'_j\right)\right)|\ge h/2,\\
				U'_i=N_{G_2}(z_{i,1})\setminus\left(Z\cup\left(\cup_{j=1}^{i-1}U'_j\right)\right) &\quad\text{otherwise.} \ 
			\end{cases}
	\end{eqnarray*}}
	\begin{eqnarray*}
		\begin{cases}
			U'_i\subseteq W_i,~|U'_i|=\frac{h}{2}, &\quad\text{if }|W_i|\ge h/2,\\
			U'_i=W_i, &\quad\text{otherwise,} \ 
		\end{cases}
	\end{eqnarray*}
	where $W_i:=N_{G_2}(z_{i,1})\setminus\left(Z\cup\left(\cup_{j=1}^{i-1}U'_j\right)\right)$; further define $U_i:=U'_i\cup\{z_{i,1},z_{i,2}\}$. Finally, set $S:=\left(\cup_{i=1}^{h/2}U_i\right)\cup V(S_v)$. So $h+1\le |S|\le h+1+(h/2+2)\cdot h/2\le h^2$.
	
	We now define a partition of $V(G)\setminus S$ that will be used in both Case 1 and Case 2. For each vertex $w\in V(G)\setminus S$, denote by $f_w$ the function $S\to [2]$ whose value on $s\in S$ is $f_w(s)=\phi(sw)$. In other words, ${f_w}$ encodes the colours of the edges from $w$ to $S$. Define
	\begin{eqnarray*}
		Y_1&:=&\{\,v\in V(G)\setminus S: |f_v^{-1}(2)|<h/2\,\},\\\
		Y_2&:=&\{\,v\in V(G)\setminus S: |f_v^{-1}(1)|<h/2\,\},\\
		X&:=& V(G)\setminus(S\cup Y_1\cup Y_2).\nonumber
	\end{eqnarray*}
	Thus $X$ consists of those $v\in V(G)\setminus S$ that send at least $h/2$ edges of each colour to $S$.

	We will show in the following claims that, for each class in this partition,  there are few vertices in that class or the number of NIM-edges incident to it is linear.
	
	\begin{claim}\label{cl-niminX}
		$e(G^{\nim}[X])\le h{|S| \choose h/2}n$.
	\end{claim}
	\begin{proof}[Proof of Claim.]
		Assume to the contrary that $e(G^{\nim}_i[X])\ge h{|S| \choose h/2}n/2$, for some $i\in [2]$. Then there exists a vertex $x\in X$ with $d_{G^{\nim}_i[X]}(x)\ge  h{|S|\choose h/2}$. By the definition of $X$, each vertex in $N_{G^{\nim}_i[X]}(x)$ has at least $h/2$ $G_i$-neighbours in $S$. By the Pigeonhole Principle, there exists a copy of $K_{h,h/2}\subseteq G_i[N_{G^{\nim}_i[X]}(x),S]$, which is a contradiction by Proposition~\ref{prop-completebipartite}(ii).  
	\end{proof}
	\begin{claim}\label{cl-verticesinY}
		$|Y_1|< h\cdot 2^{|S|}$.
	\end{claim}
	\begin{proof}[Proof of Claim.]
		Assume to the contrary that $|Y_1|\ge h\cdot 2^{|S|}$. Since the total number of functions $S\to [2]$ is $2^{|S|}$, by averaging, there exists a function $f$ and a subset $Y_{f}\subseteq Y_1$ with $|Y_{f}|\ge h$ such that for all vertices $y\in Y_{f}$, the functions $f$ and $f_y$ are the same. By the definition of $Y_1$, there is a subset $I\subseteq V(S_v)\setminus\{v\}$ with $|I|\ge h/2$ such that for all $s\in I$, $f(s)=1$, i.e., all pairs between $Y_{f}$ and $I$ are of colour 1. Recall that $S_v$ is the $h$-star consisting of \NIM-$i$-edges, thus, there exists a copy of $K_{h/2,h}\subseteq G_1[N_{G^{\nim}_1}(v),Y_{f}]$, which contradicts Proposition~\ref{prop-completebipartite}(ii).
	\end{proof} 
	
	We now show that $Y_2$ has also to be small (given that $G^{\nim}_2$ contains a large star or matching), otherwise $\nim(\phi,H)$ is linear.
	
	\medskip
	
	\nib{Case 1:} $G^{\nim}_2$ has the star $K_{1,h}$.\medskip
	
	\noindent	A similar argument as in Claim~\ref{cl-verticesinY} (with $S_u$ playing the role of $S_v$) shows that $|Y_2|<h\cdot 2^{|S|}$.
	
	\medskip
	
	\nib{Case 2:} $G^{\nim}_2$ has the matching $M_{h/2}$.\medskip 
	
	\noindent By the definition of $S$, all the $U_i$'s are pairwise disjoint and $h+1\le |S|\le h^2$,~see Figure~\ref{fig-starmatching}.
	\begin{figure}[!htb]
		\centering
		\includegraphics[scale=.8]{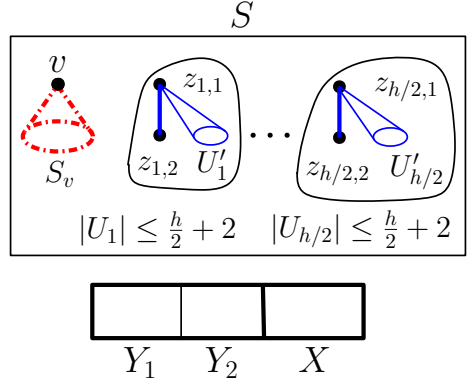}
		\caption{Case~$2$ of Lemma~\ref{lem-StMat}}
		\label{fig-starmatching}
	\end{figure} 	
	Suppose that $|Y_2|\ge h\cdot 2^{|S|}$. Again there exists a function $f:S\to [2]$ with $|f^{-1}(1)|<h/2$ and a subset $Y_{f}\subseteq Y_2$ with $|Y_{f}|\ge h$, such that for all vertices $y\in Y_{f}$, $f_y$ is the same as $f$. We will use the following claim.
	\begin{claim}\label{cl-onered}
		For every $1\le i\le h/2$, there exists $w\in U_i$ such that $f(w)=1$.
	\end{claim}
	\begin{proof}[Proof of Claim.]
		For a fixed $1\le i\le h/2$, assume to the contrary that for all $s\in U_i$, we have $f(s)=2$, i.e.,~$E(G[U_i,Y_{f}])\subseteq E(G_2)$. Thus, $|N_{G_2}(z_{i,1})\setminus(\cup_{j=1}^{i-1}U'_j\cup Z)|\ge |Y_{f}|$. Consequently, $|U'_i|=h/2$. Therefore, there exists $K_{h/2,h}\subseteq  G_2[U_i\setminus \{z_{i,1}\},Y_{f}]$, which contradicts Proposition~\ref{prop-completebipartite}(i) with $z_{i,1}$ and $z_{i,2}$ playing the roles of $v$ and $u$ respectively.
	\end{proof}
	By Claim~\ref{cl-onered} together with the fact that the $U_i$'s are pairwise disjoint, $f$ assumes value 1 at least $h/2$ times, which contradicts $Y_{f}\subseteq Y_2$. Therefore, in both cases, $|Y_2|<h\cdot 2^{|S|}$.

	Let $Y:=Y_1\cup Y_2$. Since $|S|\le h^2$, by Claims~\ref{cl-niminX} and~\ref{cl-verticesinY}, we get that
	\begin{eqnarray*}
		\nim(\phi;H)&\le& e(G^{\nim}[S])+e(G^{\nim}[S,V\setminus S])+e(G^{\nim}[Y])+e(G^{\nim}[Y,X])+e(G^{\nim}[X])\\
		&\le& |S|\cdot n+|Y|\cdot n+e(G^{\nim}[X])\\
		&\le&
		(h^2+2\cdot h 2^{h^2})n+h{h^2 \choose h/2}n\ <\ 2^{2h^2}n,
	\end{eqnarray*}
	a contradiction.
	
	This completes the proof of Lemma~\ref{lem-StMat}.
\end{proof}


\subsection{Weakly-reducible bipartite graphs}\label{sec-weaklyred}

\begin{proof}[Proof of Theorem~\ref{thm-weaklyred}]
	Let $H$ be a weakly-reducible bipartite graph. Let $h=v(H)$ and $w\in V(H)$ be a vertex such that $\ex(n,H-w)<\ex (n,H)-2^{2h^2}n$ for $n\ge n_0$. In particular, we have that
	$\ex(n,H)>2^{2h^2}n$ for $n\ge n_0$. Thus by Lemma~\ref{lem-StMat}, we may assume that there is $i\in [2]$ such that $e(G^{\nim}_i)\le \ex(n,\{K_{1,h},M_{h/2}\})\le h^2$. By the symmetry between the two colours, let us assume that $i=1$. Suppose that $E(G^{\nim}_1)\not=\emptyset$ as otherwise we are trivially done. We now distinguish the following two cases.
	
	\medskip
	
	\nib{Case 1:} For every edge $e=uv\in E(G^{\nim}_1)$, $d_{G_1}(u)\le 10h$ and $d_{G_1}(v)\le 10h$.\medskip
	
	\noindent In this case, pick one such edge, $e=uv$, and define $V_1=\left(N_{G_1}(u)\cup N_{G_1}(v)\right)\setminus\{u,v\}$. So $|V_1|\le d_{G_1}(u)+d_{G_1}(v)\le 20h$. Let     
	$$V_2:=V(G)\setminus(V_1\cup\{u,v\})=N_{G_2}(u)\cap N_{G_2}(v).$$ 
	Note that the subgraph of $G^{\nim}_2$ induced on vertex set $V_2$ satisfies $e(G^{\nim}_2[V_2])\le \ex(n,H-w)$. Otherwise, a copy of $H-w$ in $G^{\nim}_2[V_2]$ together with $u$ forms a copy of $H$ in colour $2$. Recall that $|V_1|\le 20h$, $e(G^{\nim}_1)\le h^2$ and $V_1\cup V_2\cup\{u,v\}$ is a partition of $V(G)$. Therefore for large $n$, we have
	\begin{eqnarray}
	\nim(\psi;H)&\le& e(G^{\nim}_2)+e(G^{\nim}_1)\ \le\ e(G^{\nim}_2[V_2])+(|V_1|+2)n+h^2\\ 
	&\le& \ex(n, H-w)+30hn\nonumber\ \le\ \ex(n,H)-2^{2h^2}n+30hn\ <\ \ex(n,H).\nonumber
	\end{eqnarray}
	
	\noindent	\textbf{Case 2:} There exists an edge $e=uv\in E(G^{\nim}_1)$ such that $d_{G_1}(u)\ge 10h$.\medskip
	
	\noindent Pick $A\subseteq N_{G_1}(u)$ with $|A|=10h$, and denote 
	\begin{eqnarray}
	X&:=&\{z\in V(G)\setminus (A\cup \{u,v\}): d_{G_2}(z,A)\ge h\}\nonumber,\\
	Y&:=&\{z\in V(G)\setminus (A\cup\{u,v\}): d_{G_1}(z,A)\ge h\}\setminus X\nonumber.
	\end{eqnarray}
	Note that $X\cup Y\cup A\cup \{u,v\}$ is a partition of $V(G)$. We will use the following claims.
	\begin{claim}\label{cl-nimtoX}
		For every vertex $w\in X\cup Y$, $d_{G^{\nim}_2}(w,X)<h{10h\choose h}$.
	\end{claim}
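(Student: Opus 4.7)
\textbf{Proof plan for Claim~\ref{cl-nimtoX}.} I will argue by contradiction, showing that a large colour-$2$ NIM-neighbourhood of $w$ inside $X$ would force a forbidden complete bipartite subgraph in colour $2$ violating Proposition~\ref{prop-completebipartite}(i).

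Suppose for contradiction that some $w\in X\cup Y$ satisfies $d_{G^{\nim}_2}(w,X)\ge h\binom{10h}{h}$, and let $Z\subseteq X$ be the set of its colour-$2$ NIM-neighbours in $X$, so $|Z|\ge h\binom{10h}{h}$. Every $z\in Z$ lies in $X$, and hence by definition satisfies $d_{G_2}(z,A)\ge h$; for each such $z$ select an $h$-subset $A_z\subseteq N_{G_2}(z)\cap A$. Since there are only $\binom{|A|}{h}=\binom{10h}{h}$ choices for the set $A_z$, the pigeonhole principle furnishes a single set $A'\in \binom{A}{h}$ and a subset $W\subseteq Z$ with $|W|=h$ such that $A_z=A'$ for every $z\in W$. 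Thus every pair in $W\times A'$ is a colour-$2$ edge of $G$.

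Now fix any $z\in W$ and apply Proposition~\ref{prop-completebipartite}(i) with $i=2$, the role of $v$ played by $w$, and the role of $u$ played by $z$ (which is valid since $wz\in E(G^{\nim}_2)$). The proposition tells us that $G_2[N_{G_2}(w)\setminus\{z\},\,N_{G_2}(z)\setminus\{w\}]$ is $K_{h,h/2}$-free. However, $W\setminus\{z\}\subseteq N_{G_2}(w)\setminus\{z\}$ has size $h-1\ge h/2$, and $A'\subseteq N_{G_2}(z)\setminus\{w\}$ has size $h$ (using that $w\notin A\supseteq A'$ because $w\in X\cup Y$ is disjoint from $A$, and that $W\setminus\{z\}\subseteq X$ is disjoint from $A'\subseteq A$). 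Since all pairs in $(W\setminus\{z\})\times A'$ are colour-$2$ edges, we obtain a copy of $K_{h/2,h}$ inside $G_2[N_{G_2}(w)\setminus\{z\},\,N_{G_2}(z)\setminus\{w\}]$, a contradiction.

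The argument is really just ``pigeonhole on the colour-$2$ link of $A$'' followed by an application of Proposition~\ref{prop-completebipartite}; I do not anticipate any serious obstacle, the only points of care are (i) verifying that the sets $W\setminus\{z\}$ and $A'$ are disjoint and both avoid $w$ and $z$ so that we really sit inside the bipartite graph in the proposition, and (ii) checking the arithmetic $h-1\ge h/2$ (which holds because we may assume $h\ge 2$; in fact $h$ was assumed even, so $h\ge 2$ is automatic).
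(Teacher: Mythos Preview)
Your proof is correct and follows essentially the same route as the paper: assume a large NIM-$2$-neighbourhood of $w$ in $X$, pigeonhole on the $h$-element colour-$2$ links into $A$ to extract a monochromatic $K_{h,h}$ (or $K_{h-1,h}$) between that neighbourhood and $A$, and then invoke Proposition~\ref{prop-completebipartite}. The only difference is that the paper appeals to part~(ii) of the proposition (with $U_2=N_{G^{\nim}_2}(w)$ on one side and $A$ on the other), whereas you fix a specific NIM-$2$-edge $wz$ and use part~(i); both work, and your disjointness and size checks are all in order.
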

	\begin{proof}[Proof of Claim.]
		Assume to the contrary that there exists a vertex $w\in X\cup Y$ with $d_{G^{\nim}_2}(w,X)\ge h{10h\choose h}$, and define $S:= N_{G^{\nim}_2}(w,X)$. Since $|A|=10h$ and vertices in $S$ all have $G_2$-degree at least $h$ in $A$, there exists a subset of $S$ of size at least $h$ such that its vertices are connected in $G_2$ to the same $h$ vertices in $A$, i.e.,~$K_{h,h}\subseteq G_2[S,A]$, which contradicts Proposition~\ref{prop-completebipartite}(ii).
	\end{proof}
	Define $Y'=Y\cap N_{G_1}(v)$ to be the set of all vertices in $Y$ that are adjacent to $v$ with a $1$-coloured edge, and $Y''=Y\setminus Y'$.
	\begin{claim}\label{cl-y'size}
		$|Y'|< {10h \choose h}h$.
	\end{claim}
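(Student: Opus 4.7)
The plan is to argue by contradiction via a pigeonhole argument, landing on a contradiction with Proposition~\ref{prop-completebipartite}(i) applied to the NIM-$1$-edge $uv$.

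Suppose $|Y'| \ge \binom{10h}{h}\, h$. Every $z \in Y'$ has at least $h$ $G_1$-neighbours inside $A$, so it determines some $h$-subset $B_z \subseteq N_{G_1}(z) \cap A$. Since $|A| = 10h$, the number of candidate $h$-subsets of $A$ is $\binom{10h}{h}$. By the pigeonhole principle, some fixed $h$-set $B \subseteq A$ satisfies $B = B_z$ for at least $h$ values $z$; call this collection $Z \subseteq Y'$, so $|Z| \ge h$ and $B \subseteq N_{G_1}(z)$ for every $z \in Z$. In particular, $G_1[B,Z] \supseteq K_{h,h}$.

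Now observe that $B \subseteq A \subseteq N_{G_1}(u)$ and $Z \subseteq Y' \subseteq N_{G_1}(v)$, while $u \notin Z$ (since $Y' \subseteq V(G)\setminus\{u,v\}$). Deleting $v$ from $B$ if necessary (at most one vertex is lost), we obtain $K_{h,h-1} \subseteq G_1[B \setminus \{v\},\, Z]$ with $B \setminus \{v\} \subseteq N_{G_1}(u) \setminus \{v\}$ and $Z \subseteq N_{G_1}(v) \setminus \{u\}$. Since $h/2 \le h-1$, this bipartite subgraph contains $K_{h,h/2}$, with the $h$-side sitting inside $N_{G_1}(v)\setminus\{u\}$ and the $(h/2)$-side inside $N_{G_1}(u)\setminus\{v\}$.

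But $uv \in E(G^{\nim}_1)$, so Proposition~\ref{prop-completebipartite}(i) (with $i=1$ and the roles of $u,v$ as in that statement) says that $G_1[N_{G_1}(v)\setminus\{u\},\, N_{G_1}(u)\setminus\{v\}]$ is $K_{h,h/2}$-free. This is the desired contradiction, so $|Y'| < \binom{10h}{h}\, h$. There is no serious obstacle here; the only minor bookkeeping is the possibility $v \in B$, which is absorbed by the slack $h/2 \le h-1$.
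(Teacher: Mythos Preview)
Your proof is correct and follows essentially the same route as the paper: a pigeonhole argument on the $h$-subsets of $A$ produces a monochromatic $K_{h,h}$ between $Y'$ and $A$, which then forces a colour-$1$ copy of $H$ through the \NIM-$1$-edge $uv$. The only cosmetic difference is that the paper phrases the final step as ``extend $K_{h,h}$ to a $K_{h+1,h+1}\supseteq H$ containing $uv$'', whereas you package the same extension via Proposition~\ref{prop-completebipartite}(i); your version has the slight advantage of explicitly handling the possibility $v\in A$ (absorbed by the slack $h/2\le h-1$), which the paper's terse formulation glosses over.
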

	\begin{proof}[Proof of Claim.]
		Assume to the contrary that $|Y'|\ge {10h \choose h}h$. Since all vertices in $Y'$ have at least $h$ $G_1$-neighbours in $A$, there exists a copy of $K_{h,h}\subseteq G_1[Y',A]$, which extends to a copy of $K_{h+1,h+1}\supseteq H$ containing the edge $uv\in E(G^{\nim}_1)$, a contradiction.  
	\end{proof}
	
	By Claims~\ref{cl-nimtoX},~\ref{cl-y'size} and since $|A|=10h$, the number of edges in $G^{\nim}_2$ with at least one end point in the set $A\cup Y'\cup X\cup\{u,v\}$ is at most $3h{10h\choose h}n$. It remains to estimate $e(G^{\nim}_2[Y''])$. We claim that $e(G^{\nim}_2[Y''])\le\ex(n,H-w)$. Otherwise, since all the edges connecting $v$ to $Y''$ have colour 2, we can extend the copy of $H-w\subseteq G^{\nim}_2[Y'']$ to a copy of $H$ by adding~$v$. This contradicts the definition of $G^{\nim}_2$. Hence,
	\begin{eqnarray}
	\nim(\psi;H)&=&e(G^{\nim}_2)+e(G^{\nim}_1)\le\ 3h{10h\choose h}n+\ex(n,H-w)+h^2\nonumber\\
	&<&\ex(n,H)-2^{2h^2}n+4h{10h\choose h}n\ <\ \ex(n,H).\nonumber 
	\end{eqnarray}
	Thus, any colouring with NIM-edges of two different colours is not extremal. 
\end{proof}
\subsection{General bipartite graphs}\label{sec-cyclictree}
In this subsection, we will prove Theorems~\ref{thm-cyclic} and~\ref{thm-tree}. 
\begin{proof}[Proof of Theorem~\ref{thm-cyclic}]
	As $H$ contains a cycle, $\ex(n,H)/n\rightarrow \infty$ as $n\rightarrow\infty$. Then by Lemma~\ref{lem-StMat}, we may assume that, for example, $G^{\nim}_1$ is $\{K_{1,h},M_{h/2}\}$-free. Since $G^{\nim}_2$ is $H$-free, we immediately get that 
	\begin{eqnarray*}
		\nim_2(n,H)\le \ex(n,H)+\ex(n,\{K_{1,h},M_{h/2}\})\le \ex(n,H)+h^2,
	\end{eqnarray*}
	as desired.
\end{proof}

\begin{proof}[Proof of Theorem~\ref{thm-tree}] Let us first present the part of the proof which works for an arbitrary number of colours $k$ and any forest $T$. Let $h=v(T)$. 
	
	The stated lower bound on $\nim_k(n;T)$ can be obtained by using the argument of Ma~\cite{Ma17jctb}. Fix some maximum $T$-free graph $H$ on $[n]$ and take uniform independent permutations $\sigma_1,\dots,\sigma_{k-1}$ of $[n]$. Iteratively, for $i=1,\dots,k-1$, let the colour-$i$ graph $G_i$ consists of those pairs $\{\sigma_i(x),\sigma_i(y)\}$, $xy\in E(H)$, that are still uncoloured. Finally, colour all remaining edges with colour $k$. Clearly, all edges of colours between $1$ and $k-1$ are \NIM-edges. Since $e(H)\le hn=O(n)$, the expected size of $\sum_{i=1}^{k-1} e(G_i)$ is at least 
	$$
	(k-1)e(H)-{k-1\choose 2} e(H)^2{n\choose 2}^{-1}\ge(k-1)\ex(n,T)-k^2h^2.
	$$
	By choosing the permutations for which $\sum_{i=1}^{k-1} e(G_i)$ is at least its expectation, we obtain the required bound.

	Let us turn to the upper bound. Fix an extremal $G$ with colouring $\phi: {[n]\choose 2}\rightarrow [k]$, so $\nim_k(\phi;T)=\nim_k(n;T)$. For every $1\le i\le k$, denote
	\begin{eqnarray}
	A_i:=\{v\in V(G): \exists u\in V(G),~ uv\in E(G^{\nim}_i) \}\quad\text{and}\quad a_i:=|A_i|.\nonumber
	\end{eqnarray}
	In other words, $A_i$ is the set of all vertices incident with at least one $i$-coloured NIM-edge. Note that
	\begin{eqnarray}\label{eq-nimtree}
	\nim_k(\phi;T)\le \sum_{i=1}^{k}\ex(a_i,T).
	\end{eqnarray}
	Also, for every $X\subseteq [k]$, define
	\begin{eqnarray*}
		B_X:= \{v\in V(G): v\in A_i\Leftrightarrow i\in X \}=\cap_{i\in X}A_i\setminus(\cup_{j\notin X}A_j)\quad\text{and}\quad b_X:=|B_X|.
	\end{eqnarray*}
	In other words, $B_X$ is the set of vertices which are incident with edges in $G_i^{\nim}$ if and only if $i\in X$.
	By definition, for two distinct subsets $X,Y\subseteq [k]$, $B_X\cap B_{Y}=\emptyset$. 
	
	\begin{claim}\label{cl-complements}
		For every two subsets $X,Y\subseteq [k]$ with $X\cup Y=[k]$, $\min\{b_X,b_Y\}<6kh$.
	\end{claim}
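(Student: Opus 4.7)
The plan is a proof by contradiction: assume $b_X \ge 6kh$ and $b_Y \ge 6kh$ and exhibit a monochromatic copy of $T$ in some colour $c$ containing an edge that is supposed to be NIM-$c$, violating the definition of a NIM edge.

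First I would apply pigeonhole to the $b_X b_Y$ bipartite pairs between $B_X$ and $B_Y$: since these pairs carry only $k$ colours, some colour $c$ accounts for at least $b_X b_Y/k \ge 6h\,\max\{b_X, b_Y\}$ of them. Since $X \cup Y = [k]$, either $c \in X$ or $c \in Y$; by relabelling $X \leftrightarrow Y$ if necessary assume $c \in X$. Next, iteratively delete vertices of bipartite colour-$c$ degree less than $2h$ from this bipartite graph; this removes at most $2h(b_X + b_Y) \le 4h\,\max\{b_X, b_Y\}$ edges, which is strictly less than the total, so a nonempty bipartite subgraph $H_c^*$ survives with parts $B_X^* \subseteq B_X$ and $B_Y^* \subseteq B_Y$, each of size at least $2h$, and minimum bipartite degree at least $2h$ on both sides.

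Now pick any $v \in B_X^*$; as $v \in B_X$ and $c \in X$, by definition of $B_X$ some NIM-$c$-edge $vu$ is incident to $v$ (with $u$ possibly lying outside $B_Y^*$). I would then embed a monochromatic copy of $T$ in colour $c$ that contains the edge $vu$, which contradicts that $vu$ is NIM-$c$. To do so, fix a tree component $T'$ of $T$ containing any edge, pick a leaf $\ell$ of $T'$ with neighbour $\ell'$ in $T'$, and map $\ell \mapsto u$ and $\ell' \mapsto v$. Using the bipartite $2$-colouring of $T'$ aligned with the bipartition of $H_c^*$, embed the remaining vertices of $T'$ greedily by alternating between $B_X^*$ and $B_Y^*$: at each step the current parent $w$ has at least $2h$ colour-$c$ neighbours on the opposite side of $H_c^*$, while at most $h$ vertices have been placed in total, so a fresh colour-$c$ neighbour distinct from $u$ and from all previous images is always available. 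Any additional components of $T$ (in the forest case) are embedded identically into the still-unused portion of $H_c^*$. The resulting monochromatic copy of $T$ in colour $c$ contains the edge $vu$, the desired contradiction.

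The main obstacle is calibrating the greedy embedding so that fresh colour-$c$ neighbours remain at every step; this drives both the cleaning threshold $2h$ and the constant $6kh$ in the statement (which guarantees that after cleaning we still have minimum degree $\ge 2h$ and both sides of size $\ge 2h$). Boundary cases — such as $X = \emptyset$ (in which case $Y = [k]$, forcing one to start from $B_Y^*$), or when the dominant colour $c$ ends up in $Y \setminus X$ rather than in $X$ — are handled symmetrically by swapping the roles of $B_X$ and $B_Y$ and beginning with a vertex in $B_Y^*$ instead.
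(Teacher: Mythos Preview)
Your argument is correct and follows essentially the same route as the paper. Both proofs apply pigeonhole to the bipartite pairs between $B_X$ and $B_Y$ to find a dominant colour $c$, pass to a subgraph of large minimum degree in colour $c$, use $X\cup Y=[k]$ to locate a NIM-$c$-edge incident to a surviving vertex, and then greedily embed $T$ so that this NIM edge is used. The only cosmetic differences are that the paper first restricts to equal-sized $6kh$-subsets $B_X',B_Y'$ and invokes the ``subgraph with half the average degree'' fact to obtain minimum degree $\ge 3h$, whereas you work with the full sets and do the iterative deletion explicitly to reach minimum degree $\ge 2h$; your treatment of the additional forest components is also a bit more explicit. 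Neither variant gains anything substantive over the other.
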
 
	\begin{proof}[Proof of Claim.]
		Assume on the contrary that there exist two subsets $X,Y\subseteq [k]$ such that $X\cup Y=[k]$ and $b_X,b_Y\ge 6kh$. Let $B_X'\subseteq B_X$ and $B_Y'\subseteq B_Y$ be such that $|B_X'|=|B_Y'|=6kh$. By averaging, some colour, say colour 1, contains at least $1/k$ proportion of edges in $G[B_X',B_Y']$. Set $F=G_1[B_X',B_Y']$. Then there exists $F'\subseteq F$ on vertex set $B_X''\cup B_Y''$, where $B_X''\subseteq B_X'$ and $B_Y''\subseteq B_Y'$, such that the minimum degree of $F'$ is at least half of the average degree of $F$, that is,
		\begin{eqnarray*}
			\de(F')\ge \frac{e(F)}{|V(F)|}\ge \frac{|B_X'|\cdot |B_Y'|}{k\cdot (|B_X'|+|B_Y'|)}=\frac{(6kh)^2}{k\cdot 12kh}= 3h.
		\end{eqnarray*}
		Let $v\in V(T)$ be a leaf, $u$ be its only neighbour, and $T':=T-v$, where $T-v$ is the forest obtained from deleting the leaf $v$ from $T$. 
		Since $X\cup Y=[k]$, without loss of generality, we can assume that $1\in X$. Fix an arbitrary vertex $x\in B_X''$ and let $w$ be a $G^{\nim}_1$-neighbour of~$x$. (Such a vertex exists as $x\in B_X''\subseteq B_X$ and $1\in X$.) Then $\de(F'-w)\ge \de(F')-1\ge 2h$. We can then greedily embed $T'$ in $F'-w$ with $x$ playing the role of $u$. As this copy of $T'$ is in $F'-w\subseteq G_1$, together with $xw\in G^{\nim}_1$, we get a monochromatic copy of $T$ with an edge in $G^{\nim}$, a contradiction (see Figure~\ref{fig-tree}).
	\end{proof}
	\begin{figure}[!htb]
		\centering
		\includegraphics[scale=1.3]{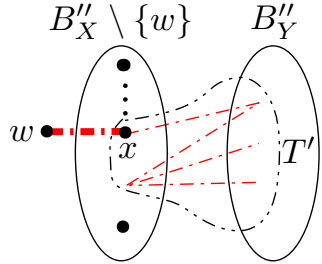}
		\caption{Finding a copy of $T\in G_1$.}
		\label{fig-tree}
	\end{figure}
	
	We will divide the rest of the proof into two cases.
	\medskip
	
	\nib{Case 1:} There exists a subset $X\subset [k]$ such that $|X|=k-1$ and $b_X\ge 6kh $.\medskip 
	
	\noindent Let $\{j\}=[k]\setminus X$, and $\cY$ be the collection of all subsets of $[k]$ containing $j$. By Claim~\ref{cl-complements}, $b_{Y}<6kh$, for every set $Y\in \cY$, implying that $a_j=\sum_{Y\in \cY}b_Y<2^k\cdot 6kh$. Hence, by~\eqref{eq-nimtree},
	\begin{eqnarray*}
		\nim_k(\phi;T)&\le&\sum_{i\in [k]}\ex(a_i,T)\ \le\ \sum_{i\in [k]\setminus\{j\}}\ex(a_i,T)+\ex(2^k\cdot 6kh,T)\\
		&\le& (k-1)\ex(n,T)+2^k\cdot 6kh^2.
	\end{eqnarray*}
	Thus the theorem holds in this case.
	\medskip
	
	\nib{Case 2:} For all subsets $X\subset [k]$ with $|X|=k-1$, we have $b_X< 6kh$.\medskip 
	
	\noindent By Claim~\ref{cl-complements}, we have $b_{[k]}\le 2\cdot 6kh$. Hence, all but at most $(k+2)6kh$ vertices are adjacent to \NIM-edges with at most $k-2$ different colours, which implies that they are in at most $k-2$ different sets $A_i$. Therefore, 
	\begin{equation}\label{eq:SumAi}
	a_1+\dots+a_k\le (k-2)n+12(k+2)kh.
	\end{equation}
	
	Now our analysis splits further, depending on the cases of Theorem~\ref{thm-tree}. If $k=2$, then we are done by~\eqref{eq-nimtree} and~\eqref{eq:SumAi}:
	$$\nim_2(\phi;T)\le h(a_1+a_2)\le 96h^2\le \ex(n,T).
	$$
	
	Thus it remains to consider the case when $k\ge 3$ and $T$ is a tree. By taking the disjoint union of two maximum $T$-free graphs, we see that the Tur\'an function of $T$ is superadditive, that is, 
	\begin{equation}\label{eq:SubAdd}
	\ex(\ell,T)+\ex(m,T)\le \ex(\ell+m,T),\quad\mbox{for any $\ell,m\in \I N$}.
	\end{equation}
	The Fekete Lemma implies that $\ex(m,T)/m$ tends to a limit $\tau$. Since, for example, $\ex(m,T)\le hm$, we have that $\tau\le h$, in particular, $\tau$ is finite. Also, excluding the case $T=K_2$ when the theorem trivially holds,
	we have $\tau>0$. In particular, $|\,\ex(m,T)/m-\tau\,|< c$ for all large $m$, where $c:=\tau/(3k-4)>0$ satisfies $(\tau+c)(k-2)=(k-1)(\tau-2c)$. 
	
	Thus~\eqref{eq-nimtree},~\eqref{eq:SumAi},~\eqref{eq:SubAdd} and the fact that $n$ is sufficiently large give that 
	\begin{eqnarray*}
		\nim_k(\phi;T)&\le& \ex(a_1+\dots+a_k,T)\ \le\ \ex((k-2)n+12(k+2)kh,T)\\
		&\le& (\tau+c) \big((k-2)n+12(k+2)kh\big)\ \le \ (k-1)(\tau-2c)n + 24(k+2)kh^2\\
		&\le& (k-1)(\tau-c)n\le (k-1)\ex(n,T).
	\end{eqnarray*}
	This finishes the proof of Theorem~\ref{thm-tree}.\end{proof}


\section{Proofs of Theorems~\ref{thm-ram-nim} and~\ref{thm-weakstab}}\label{sec-ram-nim}


We need the following lemma, which states that the reduced graph of the \NIM-graph cannot have a large clique, linking the nim function to the new Ramsey variant $r^*$.

\begin{lemma}\label{lem-no-large-clique}
	For $i\in [k]$, let $H_i$ be a non-bipartite graph, and let $1/k,1/r\ge \gamma\gg \ep\gg 1/N>0$, where $r:=R(a_1-1,\ldots,a_k-1)$ and $a_i:=\chi(H_i)$. Let $V_1,\dots,V_m$ be disjoint sets, each of size at least $N$.
   Take any $\phi: {V\choose 2}\rightarrow [k]$, where $V:=V_1\cup\dots\cup V_m$,  and let $G^{\nim}$ be the \emph{\NIM}-graph of $\phi$. Then the graph $R:=R(\ep,\gamma,\phi|_{E(G^{\nim})},(V_i)_{i=1}^m)$ is $K_{r^*+1}$-free, where $r^*:=r^*(H_1,\ldots,H_k)$.
\end{lemma}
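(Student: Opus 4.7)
Suppose for contradiction that $R$ contains a clique $K_{r^*+1}$, say on clusters $U_1,\ldots,U_{r^*+1}$. The goal is to construct a feasible colouring $\xi:{[r^*+1]\choose\le 2}\to[k]$ in the sense of Definition~\ref{def-r*}, contradicting the maximality of $r^*$. For each edge $U_sU_t$ of this clique, set $C(s,t):=\{c\in[k]:d(G^{\nim}_c[U_s,U_t])\ge\ga\}$, which is non-empty by the definition of $R$. Any choice $\xi(st)\in C(s,t)$ makes the pair-colouring satisfy~(P\ref{item-r*1}): a colour-$i$ homomorphic copy of $H_i$ in $\xi$ would, via Lemma~\ref{lem-embedding} applied to the corresponding $\ep$-regular pairs of colour-$i$ density at least $\ga$ inside $G^{\nim}_i$, yield $H_i\subseteq G^{\nim}_i\subseteq G_i$, so some \NIM-$i$-edge would lie inside a monochromatic colour-$i$ copy of $H_i$, contradicting the defining property of \NIM-edges.

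For~(P\ref{item-r*2}) I choose $\xi(s)$ using the internal structure of $U_s$. Since $|U_s|\ge N\gg 1$, applying Lemma~\ref{lm:RL} inside the complete $k$-edge-coloured graph on $U_s$ produces a sub-partition whose associated reduced graph (under majority colouring) is nearly complete: each pair is automatically an edge whenever it is $\ep$-regular, because the total density is $1$ and so some colour has density at least $1/k\ge\ga$. Using $\ep\ll 1/r$ one finds a complete subgraph on $r=R(a_1-1,\ldots,a_k-1)$ vertices inside this reduced graph; multicolour Ramsey then yields a colour $c_s\in[k]$ and sub-clusters $A_1,\ldots,A_{a_{c_s}-1}\subseteq U_s$ such that every pair $(A_i,A_j)$ is $\ep$-regular with colour-$c_s$ density at least $\ga$ in $G_{c_s}$. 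Set $\xi(s):=c_s$. The crucial claim is that $c_s\notin C(s,t)$ for every $t\ne s$; granting this, any $\xi(st)\in C(s,t)$ differs from $c_s=\xi(s)$ and from $c_t=\xi(t)$, establishing~(P\ref{item-r*2}).

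Suppose toward a contradiction that $c_s\in C(s,t)$. Since $(U_s,U_t)$ is $\ep$-regular of colour-$c_s$ density at least $\ga$ in $G^{\nim}_{c_s}$, Lemma~\ref{lem-slicing} gives that each $(A_i,U_t)$ is $\ep'$-regular with colour-$c_s$ density at least $\ga-\ep$ in $G^{\nim}_{c_s}\subseteq G_{c_s}$. Thus $A_1,\ldots,A_{a_{c_s}-1},U_t$ form a colour-$c_s$ blow-up of $K_{a_{c_s}}$ in $G_{c_s}$ with every cross-pair dense and regular. As $\chi(H_{c_s})=a_{c_s}$, Lemma~\ref{lem-embedding} embeds a copy of $H_{c_s}$ into $G_{c_s}$ inside this blow-up, and that copy must meet $U_t$: otherwise $H_{c_s}$ would be a subgraph of the complete $(a_{c_s}-1)$-partite graph on $A_1\cup\ldots\cup A_{a_{c_s}-1}$, which has chromatic number only $a_{c_s}-1$. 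Since removing isolated vertices does not decrease the chromatic number, some non-isolated vertex of $H_{c_s}$ is embedded into $U_t$, contributing an edge of the embedded $H_{c_s}$ inside $[U_s,U_t]\cap G^{\nim}_{c_s}$---a \NIM-$c_s$-edge inside a monochromatic colour-$c_s$ copy of $H_{c_s}$, the desired contradiction. The main obstacle is this last step: one must guarantee that the embedded $H_{c_s}$ actually traverses $U_t$ (so that the contradicting edge is in fact a \NIM-edge), which is why the chromatic-number comparison $\chi(H_{c_s})=a_{c_s}>a_{c_s}-1$ and the choice $c_s$ coming from an internal mono-$K_{a_{c_s}-1}$ are essential.
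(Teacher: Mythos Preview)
Your proof is correct and follows essentially the same strategy as the paper's: apply regularity inside each cluster to locate a monochromatic $K_{a_p-1}$ structure (via Tur\'an plus the definition of $r=R(a_1-1,\dots,a_k-1)$) and use this to define the singleton colours, then combine the Slicing and Embedding Lemmas to derive a contradiction in each of the cases (P\ref{item-r*1}) and (P\ref{item-r*2}). The only cosmetic difference is that you package the (P\ref{item-r*2}) argument as ``$c_s\notin C(s,t)$'' whereas the paper phrases it as a contradiction from an assumed violation $\xi(V_iV_j)=\xi(V_i)$; your chromatic-number argument forcing a non-isolated vertex into $U_t$ is exactly equivalent to the paper's remark that the embedded $H_p$ contains at least $\delta(H_p)$ edges of $G^{\nim}[V_i,V_j]$.
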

\begin{proof}
	Given the graphs $H_i$ with $a_i=\chi(H_i)$, and $r=R(a_1,\ldots,a_k)$, choose additional constants so that the following hierarchy holds:
	$$
	\frac{1}{r}\gg \gamma\gg\ep_1\gg\frac{1}{M}\gg\ep\gg \frac{1}{N}>0.$$
	Let the $V_i$'s and $\phi$ be as in the statement of the lemma. 
	For each $i\in[m]$, apply the Multicolour Regularity Lemma (Lemma~\ref{lm:RL}) with constants $\ep_1$ and $1/\ep_1$ to the $k$-coloured complete graph on $V_i$ to obtain an $\ep_1$-regular partition $V_i=U_{i,1}\cup\dots\cup U_{i,m_i}$ with $1/\ep_1\le m_i\le M$. 
	Let $R_i:=R(\ep_1,\gamma,\phi|_{{V_i\choose 2}},(U_{i,j})_{j=1}^{m_i})$ be the associated reduced graph. 
	
Note that the fraction of the elements $xy\in {V_i\choose 2}$ with $x\in U_{i,a}$ and $y\in U_{i,b}$ such that the pair $(U_{i,a},U_{i,b})$ 
 is not $\ep_1$-regular in some colour or satisfies $a=b$ is at most  $\ep_1+1/m_i$. Since $\gamma\le 1/k$, the remaining elements of ${V_i\choose 2}$ come from edges of~$R_i$. Recall that $m_i=v(R_i)$. Thus, we have  that 
 \begin{equation}\label{eq:eRi}
 e(R_i)\ge \frac{(1-\ep_1-1/m_i){|V_i|\choose 2}}{\lceil\, |V_i|/m_i\,\rceil^2}\ge (1-2\ep_1){m_i\choose 2}.
 \end{equation}
	
	Let $\xi:E(R)\to [k]$ be the colouring of $R$. We extend it to the vertices of $R$ as follows. Take $i\in [m]$. Let $\xi_i: E(R_i)\rightarrow [k]$ be the colouring of $R_i$. 
By~\eqref{eq:eRi} and since $v(R_i)\ge 1/\ep_1$ and $\ep_1\ll 1/r$, we have that $e(R_i)>\ex(m_i,K_{r})$. By Tur\'an's theorem~\cite{Turan41}, the graph $R_i$ contains an $r$-clique. By the definition of $r$, the restriction of the $k$-edge-colouring $\xi_i$ to this $r$-clique contains a colour-$p$ copy of $K_{a_p-1}$ for some $p\in[k]$. Let $\xi$ assign the colour $p$ to $V_i$.
	
	Suppose to the contrary that some $(r^*+1)$-set $A$ spans a clique in~$R$. The restriction of $\xi$ to  ${A\choose \le 2}$ violates either~(P\ref{item-r*1}) or~(P\ref{item-r*2}) from Definition~\ref{def-r*}. We will derive contradictions in both cases, thus finishing the proof. 
	If $\xi$ contains an edge-monochromatic  homomorphic copy of some $H_i$ in colour $i\in[k]$, then by
	the Embedding Lemma (Lemma~\ref{lem-embedding}) the colour-$i$ subgraph of $G^{\nim}$ contains a copy of $H_i$, a contradiction to $G^{\nim}$ consisting of the \NIM-edges. So suppose that~(P\ref{item-r*2}) fails, say, some pair $V_iV_j\in {A\choose 2}$ satisfies $\xi(V_iV_j)=\xi(V_i)$, call this colour $p$. By the definition of $\xi(V_i)$, $R_i$ contains an $(a_p-1)$-clique of colour $p$ under $\xi_i$, say with vertices $U_1,\dots,U_{a_p-1}\in V(R_i)$. Observe that $\ep_1\ge \max\{2\ep,\ep M\}\ge\max\{2\ep,\ep \cdot v(R_i)\}$ and 
	$p$ is the majority colour on edges in $G^{\nim}[V_i,V_j]$. The Slicing Lemma (Lemma~\ref{lem-slicing}) with e.g.\ $\al:=1/M$ gives that each pair $(V_j,U_h)$ with $h\in [a_p-1]$ is $(\ep_1,\gamma/2)$-regular in $G^{\nim}_p$. The Embedding Lemma (Lemma~\ref{lem-embedding}) gives a copy of $H_{p}$ in $G$ containing at least one (in fact, at least $\delta(H_p)$) edges of $G^{\nim}[V_i,V_j]$, a contradiction. 
\end{proof}

\begin{proof}[Proof of Theorem~\ref{thm-ram-nim}]
	For the upper bound, let $1\gg \ep\gg\gamma\gg\ep_1>0$. Let $n$ be large and
 suppose to the contrary that there exists some colouring $\phi: E(K_n)\rightarrow [k]$ that violates~\eqref{eq:ram-nim}. Apply the Multicolour Regularity Lemma (Lemma~\ref{lm:RL}) to the NIM-graph $G^{\nim}$ of $\phi$ with parameters $\ep_1$ and  $1/\ep_1$. A calculation similar to the one in~\eqref{eq:eRi} applies here, where additionally one has to discard at most $k\gamma {n\choose 2}$ edges in $\NIM(\phi)$ coming from pairs that have density less than $\gamma$ in each colour. By $\gamma\ll \ep$, we conclude
 that the reduced graph $R=R(\ep_1,\gamma,\phi|_{E(G^{\nim})})$ of $G^{\nim}$ has at least $(1-1/{r^*}+\ep/2)\frac{v(R)^2}{2}$ edges. By Tur\'an's theorem, $K_{r^*+1}\subseteq R$, contradicting Lemma~\ref{lem-no-large-clique}.

For the lower bound, take a feasible $k$-colouring $\xi$ of ${[r^*]\choose \le 2}$, where $r^*:=r^*(H_1,\dots,H_k)$. If possible, among all such colourings take one such that all singletons have the same colour. Consider the blow-up colouring $\phi:=\xi(X_1,\dots,X_{r^*})$ where the sets $X_i$ form an equipartition of~$[n]$. 

Let us show that every edge of $\K{X_1,\dots,X_{r*}}$ is a \NIM-edge. 
Take any copy $F$ of $H_i$ which is $i$-monochromatic in $\phi$. 
Since the restriction of $\xi$ to ${[r^*]\choose 2}$ has no  homomorphic copy of $H_i$ by~(P\ref{item-r*1}), the graph $F$ must use at least one edge that is inside some $V_j$. If $\xi$ assigns the value $i$ only to singletons, then no edge of the colour-$i$ graph $F$ can be a cross-edge. Otherwise, if $F$ is connected, then  $E(F)\subseteq {V_j\choose 2}$ because no edge between $V_j$ and its complement can have $\phi$-colour $i$  by~(P\ref{item-r*2}). We conclude that every cross-edge is a \NIM-edge, giving the required lower bound.\end{proof}

\begin{proof}[Proof of Theorem~\ref{thm-weakstab}]
	Choose $\ep\gg \ep_1\gg\de\gg\gamma\gg\ep_2\gg 1/n>0$. Let $\phi$ be as in the theorem. Apply the Regularity Lemma (Lemma~\ref{lm:RL})  to \NIM-graph $G^{\nim}$ with parameters $\ep_2$ and $1/\ep_2$ to get an $\ep_2$-regular partition $V(G^{\nim})=V_1\cup\dots\cup V_m$. Let $R=R(\ep_2,\ga,\phi|_{E(G^{\nim})},(V_i)_{i=1}^m)$ be the reduced graph. A similar calculation as in~\eqref{eq:eRi}  yields that $e(R)\ge\left(1-\frac{1}{r^*}-2\de\right)\frac{m^2}{2}$. On the other hand, by Lemma~\ref{lem-no-large-clique}, $R$ is $K_{r^*+1}$-free. Thus, the Erd\Ho s-Simonovits Stability Theorem~\cite{Erdos67a,Simonovits68} implies that $\dedit(R,  T(m,r^*))\le \ep_1 m^2/2$.
	Let a partition $V(R)=\cU_1\cup\dots\cup\cU_{r^*}$ minimise $|E(R)\bigtriangleup  E(\K{\cU_1,\dots,\cU_{r^*}})|$.
	We know that the minimum is at most $\ep_1 m^2/2$. Let $V(G^{\nim})=W_1\cup\dots\cup W_{r^*}$ be the partition induced by $\cU_i$'s, i.e., $W_i:=\cup_{V_j\in \cU_i} V_j$ for~$i\in[r^*]$. Let $G'$ be the graph obtained from $G^{\nim}$ by removing all edges that lie in any cluster $V_i$; or between those parts $V_i$ and $V_j$ such that $V_iV_j$ is not an $\ep_2$-regular pair or belongs to $E(R)\bigtriangleup  E(\K{\cU_1,\dots,\cU_{r^*}})$. We have
	$$
	 |  E(G^{\nim})\bigtriangleup E(G')|=|E(G^{\nim})\setminus E(G')|\le m\cdot\frac{(n/m)^2}{2}+\ep_2m^2\cdot\frac{n^2}{m^2}+|E(R)\bigtriangleup  E(T(m,r^*))|\cdot\frac{n^2}{m^2}\le \ep_1 n^2.$$
	As $e(G^{\nim})\ge (1-1/r^*)n^2/2-\de n^2$, we  have $e(G')\ge (1-1/r^*)n^2/2-2\ep_1 n^2$. Since $G'$ is $r^*$-partite (with parts $W_1,\ldots,W_{r^*}$), a direct calculation 
	gives that $\dedit(G', T(n,r^*))\le \ep n^2/2$.
	Finally, we obtain
	$$
	\dedit(G^{\nim},T(n,r^*))\le |E(G^{\nim})\bigtriangleup  E(G')|+\dedit(G',T(n,r^*))\le \ep_1 n^2+\frac{\ep n^2}{2}\le \ep n^2,$$
	as desired.
\end{proof}
\section{Proof of Theorem~\ref{thm-exact}}\label{sec-exact}

The following lemma will be useful in the forthcoming proof of Theorem~\ref{thm-exact}. It is proved by an easy modification of the standard proof of Ramsey's theorem.

\begin{lemma}[Partite Ramsey Lemma]\label{lm:PRL}
 For every triple of integers $k,r,u\in\I N$ there is $\rho=\rho(k,r,u)$ such that if $\phi$ is a $k$-edge-colouring of the complete graph on $Y_1\cup \dots\cup Y_r$, where $Y_1,\dots,Y_r$ are disjoint $\rho$-sets, then there are $u$-sets $U_i\subseteq Y_i$, $i\in [r]$, and $\xi:{[r]\choose \le 2}\to [k]$ such that $\xi(U_1,\dots,U_r)\subseteq \phi$. (In other words, we require that each ${U_i\choose 2}$ 
 and each bipartite graph $[U_i,U_j]$ is monochromatic.) 
\end{lemma}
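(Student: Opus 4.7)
The plan is to process the $\binom{r}{2}$ pairs $\{i,j\}$ one at a time, shrinking the corresponding parts so that the bipartite graph between them becomes monochromatic, and only at the very end apply classical multicolour Ramsey inside each part to extract a monochromatic $u$-clique. The key technical ingredient is a \emph{bipartite Ramsey} step: for every integer $N$ there is $N^\star=N^\star(k,N)$ such that any $k$-edge-colouring of the complete bipartite graph between two $N^\star$-sets $A$ and $B$ contains subsets $A'\subseteq A$ and $B'\subseteq B$, each of size $N$, with $[A',B']$ monochromatic. I would prove this by the standard iterated-pigeonhole argument: repeatedly pick a vertex of $A$ and pass to its largest colour-neighbourhood inside $B$; after sufficiently many rounds, a pigeonhole over the colours recorded at the chosen vertices produces a monochromatic bipartite $K_{N,N}$.

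To orchestrate the iteration, first I would fix constants backwards. Set $\rho_{\binom{r}{2}}:=R_k(u)$, the $k$-colour Ramsey number for $K_u$, and for $t$ descending from $\binom{r}{2}-1$ down to $0$ let $\rho_t:=N^\star(k,\rho_{t+1})$; then take $\rho:=\rho_0$. Order the pairs arbitrarily as $e_1,\ldots,e_{\binom{r}{2}}$, and initialise $Y_i^{(0)}:=Y_i$ for each $i\in[r]$. After processing the pair $e_t=\{i,j\}$, I would apply the bipartite Ramsey step inside $[Y_i^{(t-1)},Y_j^{(t-1)}]$ to pick $Y_i^{(t)}\subseteq Y_i^{(t-1)}$ and $Y_j^{(t)}\subseteq Y_j^{(t-1)}$, each of size $\rho_t$, so that $[Y_i^{(t)},Y_j^{(t)}]$ is monochromatic, and leave $Y_\ell^{(t)}:=Y_\ell^{(t-1)}$ for every $\ell\notin\{i,j\}$.

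The crucial observation making this nested construction consistent is that shrinking a part preserves the monochromaticity of any bipartite piece established at an earlier step, since any bipartite subgraph of a monochromatic bipartite graph is itself monochromatic. Consequently, after all $\binom{r}{2}$ pairs have been processed, each $Y_i^{(\binom{r}{2})}$ has size at least $\rho_{\binom{r}{2}}=R_k(u)$ and every cross-pair $[Y_i^{(\binom{r}{2})},Y_j^{(\binom{r}{2})}]$ is monochromatic, which defines $\xi$ on $\binom{[r]}{2}$. A final application of classical multicolour Ramsey inside each $Y_i^{(\binom{r}{2})}$ yields a monochromatic $u$-clique $U_i$, whose colour defines $\xi(\{i\})$. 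I do not foresee a serious obstacle: the bipartite Ramsey step is routine pigeonhole, and the consistency of the iterative shrinking is immediate. The only mildly delicate point is the bookkeeping of constants, which grows tower-type in $\binom{r}{2}$, but only existence of $\rho$ is required.
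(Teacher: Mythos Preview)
Your proposal is correct but follows a different route from the paper. The paper argues by induction on $r$: it iteratively selects $N=(u-1)k^r+1$ vertices $x_1,\dots,x_N$ from $Y_r$, each time shrinking \emph{all} parts so that the newly chosen $x_i$ has monochromatic edges to each remaining part; the induction hypothesis is then applied to the first $r-1$ shrunken parts, and pigeonhole over the $k^r$ possible colour-profiles among the $x_i$'s produces~$U_r$. Your argument instead treats the $\binom{r}{2}$ cross-pairs sequentially via an explicit bipartite Ramsey step, applying classical Ramsey inside each part only at the end. Both are standard and straightforward; yours is more modular (the bipartite and intra-part concerns are fully decoupled and could each be quoted as black boxes), while the paper's single inductive sweep avoids isolating a separate bipartite Ramsey lemma. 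Neither approach offers a meaningful quantitative advantage here since only the existence of $\rho$ is needed.
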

 \begin{proof} We use induction on $r$ with the case $r=1$ being the classical Ramsey theorem.
Let $r\ge 2$ and set $N:=(u-1)k^r+1$. We claim that $\rho:=(2k)^N\,\rho(k,r-1,u)$ suffices here. Let $\xi$ be an arbitrary $k$-edge-colouring of the complete graph on $Y_1\cup \dots\cup Y_r$ where each $|Y_i|= \rho$. 

Informally speaking, we iteratively pick vertices $x_1,\dots,x_N$ in $Y_r$ shrinking the parts so that each new vertex $x_i$ is monochromatic to each part. Namely, we initially let $U_i^0:=Y_i$ for $i\in [r]$. Then for $i=1,\dots,N$ we repeat the following step. Given vertices $x_1,\dots,x_{i-1}$ and sets $U_r^{i-1}\subseteq Y_r\setminus\{x_1,\dots,x_{i-1}\}$ and $U_j^{i-1}\subseteq Y_j$ for $j\in [r-1]$, we let $x_i$ be an arbitrary vertex of $U_r^{i-1}$ and, for $j\in [r]$, let $U_j^{i}$ be a maximum subset of $U_j^{i-1}$ such that all pairs between $x_i$ and $U_j^{i}$ have the same colour, which we denote by $c_{j}^i\in[k]$. Clearly, $|U_j^{i}|\ge (|U_j^{i-1}|-1)/k$ (the $-1$ term is need for $j=r$), which is at least $(2k)^{N-i}$ by a simple induction on~$i$. Thus we can carry out all $N$ steps. Moreover, each of the the final sets $U_1^{N},\dots,U_{r-1}^N$ has size at least $\rho /(2k)^N= \rho(k,r-1,u)$. By the 
induction assumption, we can find $u$-sets $U_j\subseteq U_j^N$, $j\in [r-1]$, and $\xi:{[r-1]\choose \le 2}\to [k]$ with $\xi(U_1,\dots,U_{r-1})\subseteq \phi$.

Each selected vertex $x_i$ comes with a colour sequence $(c^i_1,\dots,c^i_r)\in [k]^r$. So we can find a set $U_r\subseteq \{x_1,\dots,x_N\}$ of $\lceil N/k^r\rceil=u$ vertices that have the same colour sequence $(c_1,\dots,c_r)$. Clearly, all pairs in ${U_r\choose 2}$ (resp.\ $[U_r,U_j]$ for $j\in [r-1]$) have the same colour $c_r$ (resp.\ $c_j$). Thus if we extend the colouring $\xi$ to ${[r]\choose \le 2}$ by letting $\xi(i,r):=c_i$ for $i\in [r-1]$ and $\xi(r):=c_r$, then $\xi(U_1,\dots,U_r)\subseteq \phi$, as required.
\end{proof}

The main step in proving Theorem~\ref{thm-exact} is given by the following lemma.

\begin{lemma}\label{lem-thm-exact} Under the assumptions of Theorem~\ref{thm-exact}, there is $n_0$ such that if $\phi$ is an arbitrary $k$-edge-colouring of $G:=K_n$ with $n\ge n_0$, $e(G^{\nim})\ge t(n,r^*)$ and 
	\begin{equation}\label{eq-minnimdeg}
	\de(G^{\nim})\ge \de(T(n,r^*)),
	\end{equation}
	where $\de$ denotes the minimum degree, then $G^\nim\cong T(n,r^*)$ (in particular, $e(G^{\nim})= t(n,r^*)$).
\end{lemma}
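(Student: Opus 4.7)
The plan is to combine the approximate structure given by Theorem~\ref{thm-weakstab} with the minimum-degree hypothesis to pin down a nearly complete $r^*$-partite NIM-graph, and then to rule out any NIM-edge inside a part via a Partite-Ramsey blow-up argument that leverages both niceness and homomorphism-criticality. Once $G^{\nim}$ is shown to be $r^*$-partite, the bound $e(G^{\nim})\ge t(n,r^*)$ together with Tur\'an's theorem forces $G^{\nim}\cong T(n,r^*)$. Concretely, I first apply Theorem~\ref{thm-weakstab} with a parameter $\ep_1>0$ chosen small compared to $1/r^*$ and to the Partite-Ramsey thresholds that appear later. Since $e(G^{\nim})\ge t(n,r^*)$ lies within $O(n)$ of the extremal value in Theorem~\ref{thm-ram-nim}, stability yields a max-cut $r^*$-partition $V_1\cup\dots\cup V_{r^*}$ of $V(G)$ with $\dedit(G^{\nim},T(n,r^*))\le\ep_1 n^2$. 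A routine smoothing argument gives $\bigl|\,|V_i|-n/r^*\,\bigr|\le O(\sqrt{\ep_1})\,n$, and the hypothesis $\de(G^{\nim})\ge n-\lceil n/r^*\rceil$ then forces every $v\in V_i$ to satisfy $|V_j\setminus N_{G^{\nim}}(v)|\le O(\sqrt{\ep_1})\,n$ for each $j\ne i$.

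Now suppose for contradiction that some NIM-edge $uv$ lies inside a part, say $u,v\in V_1$, with $\phi(uv)=\ell$. Let $W_j:=N_{G^{\nim}}(u)\cap N_{G^{\nim}}(v)\cap V_j$ for $j=2,\dots,r^*$; by the above, $|W_j|=\Theta(n)$. Applying the Partite Ramsey Lemma~\ref{lm:PRL} to $W_2,\dots,W_{r^*}$ with a block size $\rho\gg\max_i v(H_i)$ and then pigeonholing on the colours of edges from $u$ and from $v$ into each resulting block produces subsets $W_j'\subseteq W_j$ of size at least $\rho'\ge \max_i v(H_i)$, a colouring $\xi'\colon{[r^*-1]\choose\le 2}\to[k]$, and colours $c_j^u,c_j^v\in[k]$ such that the $\phi$-colouring restricted to $\{u,v\}\cup\bigcup_j W_j'$ coincides with the blow-up of the colouring $\xi^*$ on the $r^*+1$ symbols $\{1_u,1_v,2,\dots,r^*\}$ given by $\xi^*(\{1_u,1_v\})=\ell$, $\xi^*(\{1_u,j\})=c_j^u$, $\xi^*(\{1_v,j\})=c_j^v$, $\xi^*(\{i,j\})=\xi'(\{i,j\})$ for $i,j\ge 2$, and $\xi^*(\{j\})=\xi'(\{j\})$ for $j\ge 2$. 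As a preliminary observation, if $c_j^u=\xi'(\{j\})$ for some $j$, then the colour-$c_j^u$ subgraph of $G$ on $\{u\}\cup W_j'$ is a complete graph and hence contains $H_{c_j^u}$ through a NIM-edge incident to $u$, contradicting the NIM-property; an analogous argument handles $c_j^v=\xi'(\{j\})$, and so we may assume no such collision.

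Now I argue dichotomously. \emph{Case~A:} $\xi^*|_{{[r^*+1]}\choose 2}$ contains a monochromatic homomorphic copy of some $H_{i_0}$ in colour $i_0$ using the symbol $1_u$ or $1_v$. Taking such a copy $F$ to be minimal and invoking homomorphism-criticality of $H_{i_0}$, I obtain a homomorphism $g\colon V(H_{i_0})\to V(F)$ in which both endpoints of a prescribed edge of $F$ incident to $1_u$ (or $1_v$) have singleton preimages. The Embedding Lemma~\ref{lem-embedding}, applied to the blow-up in which $W_j'$ plays the role of symbol $j$, then lifts $g$ to an actual $H_{i_0}$-copy in $G$ in colour $i_0$ passing through a NIM-edge incident to $u$ or $v$, a contradiction. \emph{Case~B:} every monochromatic hom-copy in $\xi^*|_{{[r^*+1]}\choose 2}$ lies inside $\xi'|_{{[r^*-1]}\choose 2}$. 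If such a hom-copy exists, it must use at least one pair $\{i,j\}$ (since $H_{i_0}$ has edges); by homomorphism-criticality and the fact that nearly all cross-pairs in $W_i'\times W_j'$ are NIM-edges by paragraph~1, Lemma~\ref{lem-embedding} again lifts it to an actual $H_{i_0}$ in $G$ through a NIM-cross-edge, contradiction. Otherwise $\xi^*|_{{[r^*+1]}\choose 2}$ is $(H_1,\dots,H_k)$-homomorphic-free, so I extend $\xi^*$ to singleton colours $\xi^*(\{1_u\}),\xi^*(\{1_v\})$ satisfying (P\ref{item-r*2}); such an extension produces a feasible colouring of ${[r^*+1]\choose\le 2}$, contradicting the maximality of $r^*$. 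The existence of a valid extension is guaranteed by niceness of $(H_1,\dots,H_k)$, which pins the singleton colours in any feasible witness at $r^*$ to a single common value $p$, thereby sufficiently restricting the collisions that could obstruct the choice of $\xi^*(\{1_u\}),\xi^*(\{1_v\})$.

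Hence no within-part NIM-edge exists, so $G^{\nim}$ is $r^*$-partite, and Tur\'an's theorem together with $e(G^{\nim})\ge t(n,r^*)$ forces $G^{\nim}\cong T(n,r^*)$. The main obstacle is the final extension step in Case~B: ensuring via niceness (and the preliminary collision analysis) that the $r^*+1$ potential ``bad'' colours at $\{1_u\}$ (namely $\ell,c_2^u,\dots,c_{r^*}^u$) do not exhaust $[k]$ and leave a feasible choice. This requires a careful structural analysis of how niceness controls both $\xi'$ and the compatibility of the $c_j^u$'s with the forced singleton colour $p$ from the feasibility witness at $r^*$, with any genuinely problematic configuration being reduced back to Case~A by locating a new hom-copy that uses $1_u$ or $1_v$.
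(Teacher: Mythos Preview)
There are two genuine gaps.

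\textbf{First gap.} Your claim that ``$\de(G^{\nim})\ge n-\lceil n/r^*\rceil$ forces $|V_j\setminus N_{G^{\nim}}(v)|\le O(\sqrt{\ep_1})\,n$ for each $j\ne i$'' is false as stated. The minimum-degree hypothesis bounds only the \emph{total} number of $G^{\nim}$-non-neighbours of $v$ by $\lceil n/r^*\rceil-1\approx |V_j|$; nothing prevents them from concentrating in a single cross part $V_j$ unless you already know that $v$ has few NIM-neighbours \emph{inside its own part}~$V_i$. The paper proves exactly this as a separate preliminary step (its Claim~5.2): by the max-cut property, if $d_{G^{\nim}}(v,V_i)>\ep n$ then $d_{G^{\nim}}(v,V_j)>\ep n/k$ for \emph{every} $j\in[r^*]$, so one can run the Partite Ramsey Lemma on $r^*$ blocks (one in each part, \emph{including}~$V_i$), obtain a feasible $\xi$ on ${[r^*]\choose\le 2}$, invoke niceness to name the common singleton colour, and then adjoin $v$ as an $(r^*{+}1)$-st vertex to reach a contradiction via homomorphism-criticality. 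Without this, your sets $W_j$ need not have size~$\Theta(n)$.

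\textbf{Second gap.} Even granting $|W_j|=\Theta(n)$, your Case~B is genuinely incomplete, as you acknowledge. To extend $\xi^*$ at $1_u$ you must avoid the $r^*$ edge colours $\ell,c_2^u,\dots,c_{r^*}^u$; for $r^*\ge k$ these may exhaust~$[k]$. Crucially, your $\xi'$ lives only on ${[r^*-1]\choose\le 2}$, so you never have a feasible colouring on $r^*$ symbols available and cannot invoke niceness to pin a common singleton colour. The paper avoids this obstacle by a different route: it does \emph{not} directly forbid a single NIM-edge inside~$V_p$, but instead shows (Claim~5.3) that any two pairs $u_1u_2,u_3u_4\in{V_p\choose 2}$ receive the same $\phi$-colour. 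It runs the Partite Ramsey Lemma with one block taken \emph{inside $V_p$ itself} (plus blocks in the other $r^*-1$ parts), so that $\xi$ is feasible on all of ${[r^*]\choose\le 2}$; niceness then fixes the common singleton colour, say~$1$. Because $\phi(u_1u_2)\ne\phi(u_3u_4)$, one of them differs from~$1$, and replacing the $r^*$-th block by the two singletons $\{u_1\},\{u_2\}$ (both assigned singleton colour~$1$) produces $\xi'$ on $r^*+1$ symbols. Any (P2) violation in $\xi'$ then means some cross edge has colour~$1$, which together with the colour-$1$ interior of a block yields a large colour-$1$ clique through a NIM-edge; any (P1) violation is handled exactly as in your Case~A via homomorphism-criticality. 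Once every part is $\phi$-monochromatic and large, there are no NIM-edges inside any part and the conclusion follows.
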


\begin{proof}
	Let $H_i$, $i\in[k]$, and $r^*$ be as in Theorem~\ref{thm-exact}. So $r^*=r^*(H_1,\ldots,H_k)$. Let
	 $$
	  N:=\max_{i\in[k]} v(H_i)-1\ge 1\quad\mbox{and}\quad 1\gg \ep\gg\ep_1\gg 1/n_0>0.
	  $$ 
	  Let $n\ge n_0$ and let $\phi$ be an arbitrary $k$-edge-colouring of $G:=K_n$. 
	
	Let $\mathcal{P}=\{V_1,\ldots,V_{r^*}\}$ be a max-cut $r^*$-partition of $G^\nim$. In particular,~for every $i,j\in [r^*]$ and every $v\in V_i$, we have $d_{G^{\nim}}(v,V_{j})\ge d_{G^{\nim}}(v,V_i)$. By applying Theorem~\ref{thm-weakstab} to $G^{\nim}$, we have 
	\begin{equation}\label{eq-p-cr}
	e(G^{\nim}[\mathcal{P}])\ge t(n,r^*)-\ep_1n^2.
	\end{equation}
	 A simple calculation shows that $|V_i|=\frac{n}{r^*}\pm \sqrt[3]{\ep_1}n$ for all $i\in[r^*]$.

	\begin{claim}\label{cl-niminside}
		For every $i\in [r^*]$ and $v\in V_i$, $d_{G^{\nim}}(v,V_i)\le\ep n$.
	\end{claim}
	\begin{proof}[Proof of Claim.]
		Assume to the contrary that there exist $i\in [r^*]$ and $v\in V_i$ such that $d_{G^{\nim}}(v,V_i)>\ep n$. 
		For each $j\in [r^*]$, as $\cP$ is a max-cut, there exists a colour $\ell\in [k]$ such that $$
		d_{G^{\nim}_{\ell}}(v,V_j)\ge d_{G^{\nim}}(v,V_j)/k \ge d_{G^{\nim}}(v,V_i)/k\ge\ep n/k=:m.
		$$ 
So, for $j\in [r^*]$, let $Z_j\subseteq N_{G^{\nim}_{\ell}}(v,V_j)$ be any subset of size $m$. 
		We have 
		\begin{eqnarray}\label{eq-check}
				e(\overline{G^{\nim}}[Z_1,\ldots,Z_{r^*}])\le e(\overline{G^{\nim}}[V_1,\dots,V_{r^*}])\stackrel{(\ref{eq-p-cr})}{\le} \ep_1n^2.
		\end{eqnarray} 
		Let $\rho:= \rho(k,r^*,N)$, where $\rho$ is the function from the Partite Ramsey Lemma (Lemma~\ref{lm:PRL}). For $i\in [r^*]$, let $Y_i$ be a random $\rho$-subset of $Z_i$, chosen uniformly and independently at random. By~\eqref{eq-check}, the expected number of missing cross-edges in $G^\nim[Y_1,\dots,Y_{r^*}]$ is at most 
		$$
		\ep_1n^2 \left({m-1\choose \rho-1}\Big/{m\choose \rho}\right)^2=\ep_1\left(\frac{\rho k}{\ep}\right)^2<1.
		$$ Thus there is a choice of  the $\rho$-sets $Y_i$'s such that $G^\nim[Y_1,\dots,Y_{r^*}]$ has no missing cross-edges. By the definition of $\rho$, there are $N$-sets $U_1\subseteq Y_1,\dots,U_{r^*}\subseteq Y_{r^*}$ and  a colouring $\xi:{[r^*]\choose \le 2}\to[k]$ such that
		$\xi(U_1,\dots,U_{r^*})\subseteq \phi$.  
		
		Note that $\xi$ is feasible. Indeed, if we have, for example, $\xi(ij)=\xi(i)=:c$, then by taking one vertex of $U_j$ and all $N$ vertices of $U_i$
		we get a colour-$c$ copy of $K_{N+1}$. However, since $N+1\ge v(H_c)$, every edge of this clique is in an $H_c$-subgraph, contradicting the fact that all pairs in the complete bipartite graph $K[U_i,U_j]$ are \NIM-edges.
		
		Consequently, as $(H_1,\ldots,H_k)$ is nice, $\xi$ must assign the same colour to all singletons, say colour~$1$.
		By construction, the vertex $v$ is monochromatic into each $Z_i\supseteq U_i$. So we can take
		$\xi':{[r^*+1]\choose \le 2}\to [k]$ such that 
		$\xi'(U_1,\dots,U_{r^*},\{v\})\subseteq \phi$, where we additionally let $\xi'(r^*+1):=1$.
		As $r^*=r^*(H_1,\ldots,H_k)$, the colouring $\xi'$ violates~(P\ref{item-r*1}) or~(P\ref{item-r*2}). This violation has to include the vertex $r^*+1$ since the restriction of $\xi'$ to ${{[r^*]\choose \le 2}}$ is the feasible colouring~$\xi$. We cannot have $i\in [r^*]$ with $\xi'(i,r^*+1)=1$ because otherwise $U_i\cup \{v\}$ is an $(N+1)$-clique coloured 1 under $\phi$, a contradiction to all pairs between $Z_i\supseteq U_i$ and $v$ being \NIM-edges. Therefore, there exists an edge-monochromatic  homomorphic copy of $H_j$ of colour $j$, say $F$, with $r^*+1\in V(F)$. By the definition of homomorphism-criticality, there exists a homomorphism $g:V(H_j)\rightarrow V(F)$ such that $|g^{-1}(r^*+1)|=1$. Therefore, we can find an edge-monochromatic copy of $H_j$ in colour $j$, with $g^{-1}(r^*+1)$ mapped to $v$, and all the other vertices of $H_j$ mapped to vertices in $U_1\cup\ldots\cup U_{r^*}$, a contradiction to all pairs between this set and $v$ being \NIM-edges.
\end{proof}
	
	We next show that all pairs inside a part get the same colour under~$\phi$.

	\begin{claim}\label{cl-allinside}
		For any $p\in [r^*]$ and any $u_1u_2,\,u_3u_4\in {V_p\choose 2}$, we have $\phi(u_1u_2)=\phi(u_3u_4)$.
	\end{claim}
	\begin{proof}[Proof of Claim.] Suppose on the contrary that $u_1,\dots,u_4\in V_p$ violate
	the claim. Without loss of generality, let $p=r^*$. Let $U:=\{u_1,\dots,u_4\}$.
		By~\eqref{eq-minnimdeg}, Claim~\ref{cl-niminside} and the fact that $|V_r|=n/r\pm \sqrt[3]{\ep_1}\,n$,  all but at most $2\ep n$ edges from any $u\in V_{r^*}$ to $V\setminus V_{r^*}$ are \NIM-edges.
		For $i\in[r^*-1]$ (resp.\ $i=r^*$), define $Z_{i}\subseteq V_{i}$ to be a largest subset of $\cap_{j=1}^4 N_{G^{\nim}}(u_j,V_i)$ (resp.\ $V_{r^*}\setminus U$) 
	 with the same colour pattern to $U$, i.e., for all $x,x'\in Z_{i}$ and $j\in [4]$ we have  $\phi(u_jx)=\phi(u_jx')$. 
	 By the Pigeonhole Principle, we have for $i\in [r^*-1]$ that
		$$|Z_{i}|\ge \frac{|\cap_{j=1}^4 N_{G^{\nim}}(u_j,V_i)|}{k^4}\ge 
		  \frac{|V_{i}|-4\cdot 2\ep n}{k^4}\ge \frac{n}{2r^*k^4}.$$ 
		  Also, $|Z_{r^*}|\ge (|V_{r^*}|-4)/k^4\ge n/(2r^*k^4)$.
	
	 Similarly to the calculation after~\eqref{eq-check}, there are $N$-subsets
		$U_i\subseteq Z_i$, $i\in [r^*]$, such that $\phi$ contains the blow-up $\xi(U_1,\dots,U_{r^*})$ of some $\xi: {[r^*]\choose \le 2}\rightarrow [k]$. As in the proof of Claim~\ref{cl-niminside}, $\xi$ is feasible and assigns the same colour, say $1$, to all singletons.
	    Since $\phi(u_1u_2)\not=\phi(u_3u_4)$, assume that e.g.\ $\phi(u_1u_2)\not=1$.
		 
		 We define the colouring $\xi':{[r^*+1]\choose \le 2}\rightarrow[k]$ so that $\xi'(U_1,\dots,U_{r^*-1},\{u_1\},\{u_2\})\subseteq \phi$, where additionally we let both $\xi'(r^*)$ and $\xi'(r^*+1)$ be~$1$. Note that $\xi'(r^*,r^*+1)=\phi(u_1u_2)$. Also, observe that we do not directly use the part $U_{r^*}$ when defining $\xi'$: the role of this part was to guarantee that $\xi$ is monochromatic on all singletons.		
		By the definition of $r^*$, the colouring $\xi'$ violates~(P\ref{item-r*1}) or~(P\ref{item-r*2}).
		
		Suppose first that $\xi'$ violates~(P\ref{item-r*2}), that is there is a pair $ij\in {[r^*+1]\choose 2}$ with $\xi'(ij)=1$. Since $\xi'(r^*,r^*+1)=\phi(u_1u_2)\not=1$, we have $\{i,j\}\not=\{r^*,r^*+1\}$. Also, we cannot have
		$i,j\in [r^*-1]$, because $\xi'$ coincides  on ${{[r^*-1]\choose \le 2}}$ with the feasible colouring $\xi$. So we can assume by symmetry that $i\in [r^*-1]$ and $j=r^*$. However, then the vertex $u_1$ is connected by \NIM-$1$-edges to the colour-$1$ clique on the $N$-set $U_i$, a contradiction. 
		
		We may now assume that the colouring $\xi'$ violates~(P\ref{item-r*1}). Let this be witnessed
		by an edge-monochromatic homomorphic copy of $H_j$ of colour $j$, say $F$. If $F$ contains exactly one vertex from $\{r^*,r^*+1\}$, then by an argument similar to the last part of the proof of Claim~\ref{cl-niminside} we get a contradiction. Otherwise, if $\{r^*,r^*+1\}\subseteq V(F)$, then, by the definition of homomorphism-critical, there exists a homomorphism $g:V(H_j)\rightarrow V(F)$ such that $|g^{-1}(r^*)|=|g^{-1}(r^*+1)|=1$. Therefore, we can find an edge-monochromatic copy of $H_j$ in colour $j$, with $g^{-1}(r^*)$ (resp. $g^{-1}(r^*+1)$)  mapped to $u_1$ (resp. $u_2$), and all the other vertices of $H_j$ mapped to vertices in $U_1\cup\ldots\cup U_{r^*-1}$, a contradiction to all pairs between this set and $\{u_1, u_2\}$ being \NIM-edges.
	\end{proof}
	
	Let $i\in [r^*]$. By Claim~\ref{cl-allinside} we know that $G[V_i]$ is a monochromatic clique. Since $|V_i|\ge  \max_{j\in[k]}v(H_j)$, no pair inside $V_i$ is a \NIM-edge. Thus $G^{\nim}$ is $r^*$-partite. Our assumption $e(G^{\nim})\ge t(n,r^*)$ implies that $G^\nim$ is isomorphic to $T(n,r^*)$, as desired.
\end{proof}

We are now ready to prove the desired exact result.

\begin{proof}[Proof of Theorem~\ref{thm-exact}]  We know by Theorem~\ref{thm-ram-nim} that $\nim(n;H_1,\dots,H_k)\ge t(n,r^*)$ for all~$n$. 

On the other hand, let $n_0$ be the constant returned by Lemma~\ref{lem-thm-exact}. Let $n\ge n_0^2$ and let $\psi$ be an extremal colouring of $G:=K_n$. In order to finish the proof of the theorem it is enough to show that necessarily $G^{\nim}\cong T(n,r^*)$.

Initially, let $i=n$, $G_n:=G$ and $\phi_n:=\psi$. Iteratively repeat the following step as long as possible: if the \NIM-graph of $\phi_i$ has a vertex $x_i$ of degree smaller than $\de(T(i,r^*))$, let $\phi_{i-1}$ be the
restriction of $\phi_i$ to the edge-set of $G_{i-1}:=G_i-x_i$ and decrease $i$ by $1$. Suppose that this procedure ends with $G_m$ and $\phi_m$.

Note that, for every $i\in\{m+1,\dots,n\}$, we have that 
 \begin{eqnarray*}
 t(i-1,r^*)&=&t(i,r^*)-\de(T(i,r^*)),\\
 \nim(\phi_{i-1})&\ge& \nim(\phi_{i})-\de(T(i,r^*))+1,
 \end{eqnarray*}
  the latter inequality following from the fact
that every \NIM-edge of $\phi_{i}$ not incident to $x_i$ is necessarily a \NIM-edge of $\phi_{i-1}$. 
These two relations imply by induction that 
 \begin{equation}\label{eq:ImprInd}
 \nim(\phi_i)\ge t(i,r^*)+n-i,\quad\mbox{for $i=n,n-1,\dots,m$.}
 \end{equation}
 In particular, it follows that $m>n_0$ for otherwise 
 $\NIM(\phi_{n_0})$ is a graph of order $n_0$ with at least $n-n_0>{n_0\choose 2}$ edges, which is impossible. Thus Lemma~\ref{lem-thm-exact} applies to $\phi_m$ and gives that $\NIM(\phi_m)\cong T(m,r^*)$. By~\eqref{eq:ImprInd} we conclude that $m=n$, finishing the proof of  Theorem~\ref{thm-exact}.\end{proof}

\section{Proofs of Theorems~\ref{thm-conjfor3} and~\ref{thm-4triangles}}\label{sec-2specialcases}

Next we will show that  Conjecture~\ref{conj-r*=r} holds for the $3$-colour case.

\begin{proof}[Proof of Theorem~\ref{thm-conjfor3}] 
Take an arbitrary feasible $3$-colouring $\xi$ of ${[r]\choose \le 2}$, where $r=R(a_2,a_3)-1$.
It suffices to show that $\xi$ assigns the same colour to all the singletons in $[r]$. Indeed, suppose that $(K_{a_1},K_{a_2},K_{a_3})$ is not nice. Then there exists a feasible $3$-colouring $\xi^*$ of ${[r^*]\choose \le 2}$ that is not monochromatic on the singletons in $[r^*]$, where $r^*:=r^*(K_{a_1},K_{a_2},K_{a_3})\ge r$. Up to relabeling, we may assume that $[r]$ contains two singletons of different colours in $\xi^*$. We then arrive to a contradiction, as the restriction of $\xi^*$ on $[r]$ is also feasible. Fix now an arbitrary feasible $3$-colouring of ${[r^*]\choose \le 2}$, which assigns the same colour, say colour $i$, to all the singletons in $[r^*]$. Then due to~(P2), colour $i$ cannot appear on ${[r^*]\choose 2}$, and so $r^*\le R(a_j,a_k)-1\le r$, where $\{j,k\}=[3]\setminus\{i\}$.

For $i\in [3]$, let $V_i$ be the set of vertices with colour $i$. Thus we have a partition $[r]=V_1\cup V_2\cup V_3$. For $i,j\in [3]$, let $\omega_j(V_i)$ be the size of the largest edge-monochromatic clique of colour $j$ in $V_i$. 

Observe the following properties that hold for every triple $i,j,\ell\in [3]$ of distinct indices, i.e.,\ for $\{i,j,\ell\}=[3]$. By~(P\ref{item-r*2}), the colour of every edge inside $V_i$ is either $j$ or $\ell$ while all the edges going between $V_j$ and $V_\ell$ have colour $i$. By the latter property and~(P\ref{item-r*1}), we have
		\begin{eqnarray}\label{eq-ai}
		\omega_i(V_{\ell})+\omega_i(V_j)\le a_i-1\quad\mbox{and}\quad V_j\not=\emptyset\ \Rightarrow\ \omega_i(V_{\ell})\le a_i-2.
		\end{eqnarray}
		
For notational convenience, define $r(n_1,\dots,n_k):=R(n_1,\dots,n_k)-1$ to be one less than the Ramsey number (i.e.\ it is the maximum order of a clique admitting a $(K_{n_1},\dots,K_{n_k})$-free edge-colouring). By the definition of $\omega_j(V_i)$, we also have
		\begin{eqnarray}\label{eq-Vi}
		|V_i|\le r(\omega_j(V_i)+1,\omega_{\ell}(V_i)+1).
		\end{eqnarray}
 Also, we will use the following trivial inequalities involving Ramsey numbers that hold for arbitrary integers $a,b,c\ge 2$: $r(a,b)+r(a,c)\le r(a,b+c-1)$ and $r(a,b)< r(a+1,b)$.

First, let us derive the contradiction from assuming that each colour $i\in [3]$ appears on at least one singleton, that is, each $V_i$ is non-empty. In order to reduce the number of cases, we allow to swap colours $1$ and $2$ to ensure that $\omega_1(V_2)\ge \omega_2(V_1)$. Thus we do not stipulate now which of $a_1$ and $a_2$ is larger. Observe that
		\begin{eqnarray}
		|V_1|+|V_2|&\overset{\eqref{eq-Vi}}{\le}& r(\omega_2(V_1)+1,\omega_3(V_1)+1)+r(\omega_1(V_2)+1,\omega_3(V_2)+1)\nonumber\\
		&\le& r(\omega_1(V_2)+1,\omega_3(V_1)+1)+r(\omega_1(V_2)+1,\omega_3(V_2)+1)\nonumber\\
		&\le&r(\omega_1(V_2)+1,\omega_3(V_1)+\omega_3(V_2)+1)\ \stackrel{\eqref{eq-ai}}{\le}\ r(\omega_1(V_2)+1,a_3).\label{eq-V12}
		\end{eqnarray}
 Hence, we get
		\begin{eqnarray}
		r\ =\ |V_1|+|V_2|+|V_3|&\overset{\eqref{eq-Vi},\eqref{eq-V12}}{\le}& r(\omega_1(V_2)+1,a_3)+r(\omega_1(V_3)+1,\omega_2(V_3)+1)\nonumber\\
		&\stackrel{\eqref{eq-ai}}{\le}& r(\omega_1(V_2)+1,a_3)+r(\omega_1(V_3)+1,a_2-1)\nonumber\\
		&<& r(\omega_1(V_2)+1,a_3)+r(\omega_1(V_3)+1,a_3)\nonumber\\
		&\le&r(\omega_1(V_2)+\omega_1(V_3)+1,a_3)\ \stackrel{\eqref{eq-ai}}{\le}\ r(a_1,a_3)\ \le\ r.\nonumber
		\end{eqnarray}

The above contradiction shows that, for some $\ell\in [3]$, the part $V_\ell$ is empty. Let $\{i,j,\ell\}=[3]$; thus $[r]=V_i\cup V_j$. It remains to derive a contradiction by assuming that each of $V_i$ and $V_j$ is non-empty.
By the symmetry between $i$ and $j$, we can assume that $\omega_j(V_i)\ge \omega_i(V_j)$. Then we have
	\begin{eqnarray*}
		r=|V_i|+|V_j|&\stackrel{\eqref{eq-Vi}}{\le}& r(\omega_j(V_i)+1,\omega_{\ell}(V_i)+1)+r(\omega_i(V_j)+1,\omega_{\ell}(V_j)+1)\\
		&\le&r(\omega_j(V_i)+1,\omega_{\ell}(V_i)+1)+r(\omega_j(V_i)+1,\omega_{\ell}(V_j)+1)\\
		&\le &r(\omega_j(V_i)+1,\omega_{\ell}(V_i)+\omega_{\ell}(V_j)+1)\\
		&\stackrel{\eqref{eq-ai}}{\le}& r(a_j-1,a_{\ell})\ <\ r(a_j,a_{\ell})\ \le\ r,
		\end{eqnarray*}	
		which is the desired contradiction that finishes the proof of Theorem~\ref{thm-conjfor3}.
\end{proof}

Next, let us present the proof that $r^*(3,3,3,3)=16$, the only non-trivial 4-colour case that we can solve.

\begin{proof}[Proof of Theorem~\ref{thm-4triangles}]
	Let $\xi:{[16]\choose \le 2}\rightarrow [4]$ be an arbitrary feasible colouring. It is enough to show that all singletons in $[16]$ get the same colour. For every $i\in[4]$, let $V_i$ denote the set of vertices of colour $i$. Suppose there are at least two different colours on the vertices, say $V_3,V_4\neq\emptyset$. As $5$ does not divide $16$, there exists at least one class, say $V_3$, of size not divisible by $5$, i.e.,~$|V_3|\not\equiv 0\pmod 5$. Choose an arbitrary vertex $v\in V_4$. Since $\xi$ is a feasible colouring, by~(P\ref{item-r*2}) the edges incident to $v$ cannot have colour $\xi(v)=4$. We can then partition $[16]\setminus\{v\}=\cup_{j\in[3]} W_j$, where $W_j:=\{u:~ \xi(uv)=j\}$. Let $j\in[3]$. By~(P\ref{item-r*1}) and~(P\ref{item-r*2}), colour $j$ is forbidden in ${W_j\choose \le 2}$. Then by Theorem~\ref{thm-conjfor3}, $|W_j|\le r^*(K_3,K_3,K_3)=R(3,3)-1=5$. Since $\sum_{j\in[3]}|W_j|=15$, we have that $|W_j|=5$ for every $j\in[3]$. Again by Theorem~\ref{thm-conjfor3}, all vertices in $W_j$ should have the same colour. Recall that $v\in V_4$, so $V_3\subseteq \cup_{j\in[3]}W_j$ and consequently $V_3$ is the union of some $W_j$'s.
	This contradicts $|V_3|\not\equiv 0\pmod 5$.
\end{proof}



\section{Concluding remarks}\label{sec-concluding}
\begin{itemize}
\item As pointed out by a referee, the function $\nim(n;H_1,\ldots,H_k)$ is related to that of $\ex^r(n;H_1,\ldots,H_r)$, which is the maximum size of an $n$-vertex graph $G$ that can be $r$-edge-coloured so that the $i$-th colour is $H_i$-free for all $i\in [r]$. Indeed, we have the following lower bound:
$$ \nim(n;H_1,\dots,H_k)\ge \max_{i\in [k]}\,\ex^{k-1}(H_1,\ldots,H_{i-1},H_{i+1},\ldots,H_k).$$
It is not inconceivable that the equality holds above if $n\ge n_0(H_1,\ldots,H_k)$. Theorems~\ref{thm-exact} and~\ref{thm-weaklyred} give classes of instances, when we have equality above. We refer the readers to Section 5.3 of~\cite{HancockStadenTreglown17arxiv} for more on the function $\ex^r$.	
	
\item The Ramsey variant $r^*$ introduced here is related to the version of Ramsey numbers studied by~Gy\'arf\'as, Lehel, Schelp and Tuza~\cite{GyarfasLST87}. In particular, Proposition 5 in~\cite{GyarfasLST87} states that $r^*(K_3,K_3,K_3,K_3)=16$, which is the consequence of the fact that $(K_3,K_3,K_3,K_3)$ is nice from Theorem~\ref{thm-4triangles}.
	
\item We prove in Theorem~\ref{thm-tree} that for any tree $T$, $\nim_{k}(n;T)=(k-1)\ex(n,T)+O_T(1)$. Let $T$ be an $(h+1)$-vertex tree and suppose that the Erd\Ho s-S\'os conjecture holds, i.e.~$\ex(n,T)\le (h-1)n/2$. Then for each $n\ge k^2h^2$ with $h| n$, we can get rid of the additive error term in the lower bound, namely, it holds that $\nim_{k}(n;T)\ge (k-1)\ex(n,T)$. This directly follows from
known results on graph packings. We present here a short self-contained proof (with a worse bound on $n$). Let $F$ be the disjoint union of $n/h$ copies of $K_h$. Let $f_i: V(F)\rightarrow [n]$, $i\in[k-1]$, be $k-1$ arbitrary injective maps and let $F_i$ be the graph obtained by mapping $F$ on $[n]$ via $f_i$. It suffices to show that we can modify $f_i$'s to have $E(F_i)\cap E(F_j)=\emptyset$ for any $ij\in{[k-1]\choose 2}$. Indeed, then the lower bound is witnessed by colouring $e\in E(K_n)$ with colour-$i$ if $e\in E(F_i)$, for each $i\in[k-1]$, and with colour-$k$ otherwise. Suppose that there is a ``conflict'' $uv\in E(F_i)\cap E(F_j)$. Let $F^*:=\cup_{i\in[k-1]}F_i$. Note that $\Delta(F^*)\le (k-1)(h-1)$. As $n>\Delta(F^*)^2+1$, there exists a vertex $w$ that is at distance at least 3 from $v$. We claim that switching $v$ and $w$ in $f_i$ will remove all conflicts at $v$ and $w$. If true, one can then repeat this process till all conflits are removed to get the desired $f_i$'s. Indeed, suppose that after switching $v$ and $w$, there is a conflict $wz\in E(F_i)\cap E(F_{\ell})$ for some $z\in N_{F_i}(v)$ and $\ell\in[k-1]\setminus\{i\}$. Then $w,z,v$ form a path of length $2$ in $F^*$, contradicting the choice of $w$.

It would be interesting to prove a matching upper bound, i.e.~to show that 
$$\nim_{k}(n;T)=(k-1)\ex(n,T)$$ 
for every tree $T$ and sufficiently large $n$. Note that equality above need not be true when $T$ is a forest. Indeed, consider $M_2$, the disjoint union of two edges. Recall that $\ex(n,M_2)=n-1$. For any $k\ge 3$ and $n\ge 4k$, we have that $\nim_{k}(n;M_2)=(k-1)\ex(n,M_2)-\frac{1}{2}(k-1)(k-2)$. Indeed, for any $k$-edge-colouring $\phi$ of $K_n$, one colour class, say colour-1, has size at least ${n\choose 2}/k\ge 2(n-1)$. As every edge share endpoints with at most $2n-4$ other edges, we see that every colour-1 edge is in a copy of $M_2$. Thus, $\nim(\phi)\le \ex^{k-1}(n,M_2,\ldots,M_2)=\sum_{i=0}^{k-2}(n-1-i)=(k-1)\ex(n,M_2)-\frac{1}{2}(k-1)(k-2)$, as desired.
\end{itemize}


\section*{Acknowledgement}

We would like to thank the referees for pointing out a flaw in the original draft, and for their careful reading, which has greatly improved the presentation of this paper.


\hide{
\section{Concluding remarks}\label{sec-concluding}

In this paper, we determined the asymptotics of $\nim(n;H_1,\ldots,H_k)$, the maximum number of $\NIM$-edges in a $k$-edge-coloured complete graph when the forbidden graphs $H_i$ are connected non-bipartite, showing a link between this function and a variant of graph Ramsey number. For bipartite graphs, we showed that equality holds in the trivial lower bound $\nim_2(n;H)\ge \ex(n,H)$ for weakly-reducible bipartite graphs. We also give an almost tight upper bound for general bipartite graphs, providing further evidence that the answer to Problem~\ref{prob-KS} might in the affirmative.

\begin{itemize}
  \item To extend Theorem~\ref{thm-exact} to all edge-colour-critical graphs, one possible first step is to show that one cannot ``extend'' a feasible colouring. To be precise, let $a_1,\ldots,a_k\ge 3$ be integers, $r^*=r^*(a_1,\ldots,a_k)$ and $\xi:{[r^*]\choose \le 2}\rightarrow [k]$ be a feasible colouring. We say that $\xi':{[r^*-1]\cup\{x,y\}\choose 2}\cup [r^*-1]\rightarrow [k]$ is an \emph{extension} of $\xi$, if $\xi'|_{{[r^*-1]\choose  2}}=\xi|_{{[r^*-1]\choose  2}}$ and  $\xi'$ does not violate~(P\ref{item-r*1}). Note that if $\xi$ has an extension $\xi'$, then the exact result  Theorem~\ref{thm-exact} would not hold for edge-colour-critical graphs that are not strongly-colour-critical. To see this, one can take a blow-up colouring of $\xi$; add two extra vertices $x,y$; let $xy$ be coloured $\xi'(x,y)$; let $x,V_i$-edges and $y,V_i$-edges be coloured $\xi'(x,i)$ and $\xi'(y,i)$ respectively for all $i\in[r^*]$. Then it is not hard to see that all cross-edges and edges incident to $\{x,y\}$ are $\NIM$-edges,\comment{why? Eg if $\xi(1)=\xi(1x)=1$ which is not ruled out, then $x,V_1$-edges are not NIM} yielding $t(n,r^*)+\Omega(n)$ $\NIM$-edges.
	
  \item The only obstacle for extending Theorem~\ref{thm-cyclic} to any fixed number $k$ of colours is Lemma~\ref{lem-StMat}. It seems
  that our proof of Lemma~\ref{lem-StMat} extends to three and four colours but it gets rather technical. So, some additional ideas are probably needed to generalise Theorem~\ref{thm-cyclic} to all~$k$.
  \end{itemize}
}

\bibliography{NIM-2nd-revision-Jan26}

\begin{thebibliography}{10}

\bibitem{BurrErdosLovasz76}
{\sc Burr, S.~A., Erd{\H o}s, P., and Lov{\'a}sz, L.}
\newblock On graphs of {R}amsey type.
\newblock {\em Ars Combinatoria 1\/} (1976), 167--190.

\bibitem{Conlon12}
{\sc Conlon, D.}
\newblock On the {R}amsey multiplicity of complete graphs.
\newblock {\em Combinatorica 32\/} (2012), 171--186.

\bibitem{ConlonFoxSudakov10}
{\sc Conlon, D., Fox, J., and Sudakov, B.}
\newblock An approximate version of {S}idorenko's conjecture.
\newblock {\em Geom.\ Func.\ Analysis 20\/} (2010), 1354--1366.

\bibitem{ConlonFoxSudakov15}
{\sc Conlon, D., Fox, J., and Sudakov, B.}
\newblock Recent developments in graph {R}amsey theory.
\newblock In {\em Surveys in combinatorics 2015}, vol.~424 of {\em London Math.
  Soc. Lecture Note Ser.} Cambridge Univ. Press, Cambridge, 2015, pp.~49--118.

\bibitem{ConlonKimLeeLee15arxiv}
{\sc Conlon, D., Kim, J.~H., Lee, C., and Lee, J.}
\newblock Some advances on {S}idorenko's conjecture.
\newblock E-print arxiv:1510.06533, 2015.

\bibitem{ConlonLee17}
{\sc Conlon, D., and Lee, J.}
\newblock Finite reflection groups and graph norms.
\newblock {\em Adv. Math. 315\/} (2017), 130--165.

\bibitem{CKPSTY}
{\sc Cummings, J., Kr{\'a}l', D., Pfender, F., Sperfeld, K., Treglown, A., and
  Young, M.}
\newblock Monochromatic triangles in three-coloured graphs.
\newblock {\em J.\ Combin.\ Theory\ {\rm (B)} 103\/} (2013), 489--503.

\bibitem{Erdos55}
{\sc Erd{\H{o}}s, P.}
\newblock Some theorems on graphs.
\newblock {\em Riveon Lematematika 9\/} (1955), 13--17.

\bibitem{Erdos62}
{\sc Erd{\H{o}}s, P.}
\newblock On a theorem of {R}ademacher-{T}ur\'an.
\newblock {\em Illinois J. Math. 6\/} (1962), 122--127.

\bibitem{Erdos62a}
{\sc Erd{\H{o}}s, P.}
\newblock On the number of complete subgraphs contained in certain graphs.
\newblock {\em Magyar Tud. Akad. Mat. Kutat\'o Int. K\"ozl. 7\/} (1962),
  459--464.

\bibitem{Erdos67a}
{\sc Erd{\H{o}}s, P.}
\newblock Some recent results on extremal problems in graph theory. {R}esults.
\newblock In {\em Theory of Graphs (Internat. Sympos., Rome, 1966)}. Gordon and
  Breach, New York, 1967, pp.~117--123 (English); pp. 124--130 (French).

\bibitem{Erdos97}
{\sc Erd{\H o}s, P.}
\newblock Some recent problems and results in graph theory.
\newblock {\em Discrete Math. 164\/} (1997), 81--85.

\bibitem{ErdosFaudreeRousseau92dm}
{\sc Erd{\H o}s, P., Faudree, R.~J., and Rousseau, C.~C.}
\newblock Extremal problems involving vertices and edges on odd cycles.
\newblock {\em Discrete Math. 101\/} (1992), 23--31.

\bibitem{ErdosSimonovits66}
{\sc Erd{\H{o}}s, P., and Simonovits, M.}
\newblock A limit theorem in graph theory.
\newblock {\em Stud.\ Sci.\ Math.\ Hungar. 1\/} (1966), 51--57.

\bibitem{ErdosStone46}
{\sc Erd{\H o}s, P., and Stone, A.~H.}
\newblock On the structure of linear graphs.
\newblock {\em Bull.\ Amer.\ Math.\ Soc. 52\/} (1946), 1087--1091.

\bibitem{ErdosSzekeres35}
{\sc Erd{\H os}, P., and Szekeres, G.}
\newblock A combinatorial problem in geometry.
\newblock {\em Compositio Math 2\/} (1935), 463--470.

\bibitem{FranekRodl92}
{\sc Franek, F., and R\"odl, V.}
\newblock Ramsey problem on multiplicities of complete subgraphs in nearly
  quasirandom graphs.
\newblock {\em Graphs Combin. 8\/} (1992), 299--308.

\bibitem{Furedi96jcta}
{\sc F{\"u}redi, Z.}
\newblock New asymptotics for bipartite {T}ur\'an numbers.
\newblock {\em J.\ Combin.\ Theory\ {\rm (A)} 75\/} (1996), 141--144.

\bibitem{FurediMaleki14arxiv}
{\sc F{\"u}redi, Z., and Maleki, Z.}
\newblock The minimum number of triangular edges and a symmetrization method
  for multiple graphs.
\newblock {\em Combin.\ Probab.\ Comput. 26\/} (2017), 525--535.

\bibitem{FurediSimonovits13}
{\sc F{\"u}redi, Z., and Simonovits, M.}
\newblock The history of degenerate (bipartite) extremal graph problems.
\newblock In {\em Erd\"os centennial}, vol.~25 of {\em Bolyai Soc. Math. Stud.}
  J\'anos Bolyai Math. Soc., Budapest, 2013, pp.~169--264.

\bibitem{Giraud79}
{\sc Giraud, G.}
\newblock Sur le probl{\`e}me de {Goodman} pour les quadrangles et la
  majoration des nombres de {Ramsey}.
\newblock {\em J.\ Combin.\ Theory\ {\rm (B)} 27\/} (1979), 237--253.

\bibitem{GruslysLetzter16arxiv}
{\sc Gruslys, V., and Letzter, S.}
\newblock Minimising the number of triangular edges.
\newblock E-print arxiv:1605.00528; accepted by \emph{Combin.\ Probab.\
  Comput.}, 2018.

\bibitem{GrzesikHuVolec16arxiv}
{\sc Grzesik, A., Hu, P., and Volec, J.}
\newblock Minimum number of edges that occur in odd cycles.
\newblock E-print arxiv:1605.09055, 2016.

\bibitem{GyarfasLST87}
{\sc Gy\'arf\'as, A., Lehel, J., Schelp, R.~H., and Tuza, Z.}
\newblock Ramsey numbers for local colorings.
\newblock {\em Graphs Combin. 3\/} (1987), 267--277.

\bibitem{HancockStadenTreglown17arxiv}
{\sc Hancock, R., Staden, K., and Treglown, A.}
\newblock Independent sets in hypergraphs and ramsey properties of graphs and
  the integers.
\newblock E-print arXiv:1701.04754v2, 2017.

\bibitem{Hatami10}
{\sc Hatami, H.}
\newblock Graph norms and {S}idorenko's conjecture.
\newblock {\em Israel J.\ Math. 175\/} (2010), 125--150.

\bibitem{JaggerStovicekThomason96}
{\sc Jagger, C.~N., {\v S}{\v tov\'\i\v cek}, P., and Thomason, A.}
\newblock Multiplicities in subgraphs.
\newblock {\em Combinatorica 16\/} (1996), 123--141.

\bibitem{Keevash11}
{\sc Keevash, P.}
\newblock Hypergraph {Tur\'an} problem.
\newblock In {\em Surveys in Combinatorics}, R.~Chapman, Ed., vol.~392 of {\em
  London Math.\ Soc.\ Lecture Notes Ser.} Cambridge Univ.\ Press, 2011,
  pp.~83--140.

\bibitem{KeevashSudakov04jctb}
{\sc Keevash, P., and Sudakov, B.}
\newblock On the number of edges not covered by monochromatic copies of a fixed
  graph.
\newblock {\em J.\ Combin.\ Theory\ {\rm (B)} 90\/} (2004), 41--53.

\bibitem{KimLeeLee16}
{\sc Kim, J.~H., Lee, C., and Lee, J.}
\newblock Two approaches to {Sidorenko's} conjecture.
\newblock {\em Trans.\ Amer.\ Math.\ Soc. 368\/} (2016), 5057--5074.

\bibitem{KomlosSimonovits96}
{\sc Koml\'os, J., and Simonovits, M.}
\newblock {Szemer\'edi's} regularity lemma and its applications to graph
  theory.
\newblock In {\em Combinatorics, {Paul} {Erd\H os} is Eighty}, D.~Mikl\'os,
  V.~T. S\'os, and T.~S. onyi, Eds., vol.~2. Bolyai Math.\ Soc., 1996,
  pp.~295--352.

\bibitem{LiSzegedy11arxiv}
{\sc Li, J. L.~X., and Szegedy, B.}
\newblock On the logarithmic calculus and {Sidorenko's} conjecture.
\newblock E-print arxiv:1107.1153; accepted by \emph{Combinatorica}, 2018.

\bibitem{Ma17jctb}
{\sc Ma, J.}
\newblock On edges not in monochromatic copies of a fixed bipartite graph.
\newblock {\em J.\ Combin.\ Theory\ {\rm (B)} 123\/} (2017), 240--248.

\bibitem{Mubayi10am}
{\sc Mubayi, D.}
\newblock Counting substructures {I}: Color critical graphs.
\newblock {\em Adv.\ Math. 225\/} (2010), 2731--2740.

\bibitem{Nikiforov11}
{\sc Nikiforov, V.}
\newblock The number of cliques in graphs of given order and size.
\newblock {\em Trans.\ Amer.\ Math.\ Soc. 363\/} (2011), 1599--1618.

\bibitem{PikhurkoYilma17}
{\sc Pikhurko, O., and Yilma, Z.}
\newblock Supersaturation problem for color-critical graphs.
\newblock {\em J.\ Combin.\ Theory\ {\rm (B)} 123\/} (2017), 148--185.

\bibitem{Pyber86}
{\sc Pyber, L.}
\newblock Clique covering of graphs.
\newblock {\em Combinatorica 6\/} (1986), 393--398.

\bibitem{Radziszowski:ds}
{\sc Radziszowski, S.}
\newblock Small {R}amsey numbers.
\newblock {\em Electronic J.\ Combin. DS1\/} (2017), 104pp.

\bibitem{Ramsey30}
{\sc Ramsey, F.~P.}
\newblock On a problem of formal logic.
\newblock {\em Proc.\ London Math.\ Soc. 30\/} (1930), 264--286.

\bibitem{Razborov08}
{\sc Razborov, A.}
\newblock On the minimal density of triangles in graphs.
\newblock {\em Combin.\ Probab.\ Comput. 17\/} (2008), 603--618.

\bibitem{Reiher16}
{\sc Reiher, C.}
\newblock The clique density theorem.
\newblock {\em Annals of Math. 184\/} (2016), 683--707.

\bibitem{Simonovits68}
{\sc Simonovits, M.}
\newblock A method for solving extremal problems in graph theory, stability
  problems.
\newblock In {\em Theory of Graphs (Proc. Colloq., Tihany, 1966)}. Academic
  Press, 1968, pp.~279--319.

\bibitem{Sperfeld11arxiv}
{\sc Sperfeld, K.}
\newblock On the minimal monochromatic {$K_4$}-density.
\newblock E-print arxiv:1106.1030, 2011.

\bibitem{Sudakov10imc}
{\sc Sudakov, B.}
\newblock Recent developments in extremal combinatorics: {R}amsey and {T}ur\'an
  type problems.
\newblock In {\em Proceedings of the {I}nternational {C}ongress of
  {M}athematicians. {V}olume {IV}\/} (2010), Hindustan Book Agency, New Delhi,
  pp.~2579--2606.

\bibitem{Szegedy14arxiv:v3}
{\sc Szegedy, B.}
\newblock An information theoretic approach to {Sidorenko's} conjecture.
\newblock E-print arxiv:1406.6738v3, 2014.

\bibitem{Thomason89}
{\sc Thomason, A.}
\newblock A disproof of a conjecture of {Erd\H os} in {Ramsey} theory.
\newblock {\em J.\ Lond.\ Math.\ Soc. 39\/} (1989), 246--255.

\bibitem{Thomason97c}
{\sc Thomason, A.}
\newblock Graph products and monochromatic multiplicities.
\newblock {\em Combinatorica 17\/} (1997), 125--134.

\bibitem{Turan41}
{\sc {Tur\'an}, P.}
\newblock On an extremal problem in graph theory (in {Hungarian}).
\newblock {\em Mat.\ Fiz.\ Lapok 48\/} (1941), 436--452.

\end{thebibliography}
\end{document}